\theoremstyle{plain}
\newtheorem{thm}{Theorem}
\theoremstyle{definition}
\newtheorem{defn}{Definition}[section]
\newtheorem{lem}{Lemma}[section]
\newtheorem{pro}[lem]{Proposition}
\newtheorem{corollary}[lem]{Corollary}
\newtheorem{remark}{Remark}[section]
\def\R{\mathbb{R}}
\def\Z{\mathbb{Z}}
\def\N{\mathbb{N}}
\def\K{\mathcal{K}}
\def\F{\mathcal{F}}
\def\V{\mathcal{V}}
\def\abs#1{|{#1}|}
\def\norm#1{\|{#1}\|}
\def\inp#1{\langle{#1}\rangle}
\def\supp{\mbox{supp}}
\def\half{{1 \over 2}}
\def\D{\mathcal{D}}
\def\epsilon{\varepsilon}
\def\wwP{{\Omega_{R,P,L}}}
\def\A#1{{A_{#1,P,L}}}
\def\hhG{\widehat{\widehat G\,}}
\def\oG{{\overline G}}
\def\ou{{\overline u}}
\def\hhu{\widehat{\widehat u}}
\def\dist{\mathop{\rm dist}\nolimits}
\def\wlimit{\rightharpoonup}
\def\be{\begin{equation}}
\def\ee{\end{equation}}
\title{Multi-bump solutions for logarithmic Schr\"odinger equations}
\author{
Kazunaga Tanaka\footnote{
The first author is partially supported by JSPS Grants-in-Aid for Scientific Research (B) (25287025)
and Waseda University Grant for Special Research Projects 2016B-120.}\\
Department of Mathematics\\ School of Science and Engineering, Waseda University\\
3-4-1 Ohkubo, Shijuku-ku, Tokyo 169-8555, Japan
\and
Chengxiang Zhang\footnote{
The second author is supported by China Scholarship Council and NSFC-11271201.
}\\
Chern Institute of Mathematics and LPMC, Nankai University\\
Tianjin 300071, China
}
\begin{document}

\date{}
\maketitle

\begin{abstract}
\noindent
We study spatially periodic logarithmic Schr\"odinger equations:
    \begin{equation}\tag{LS}
    \left\{
    \begin{aligned}
	&-\Delta u + V(x)u=Q(x)u\log u^2, \quad u>0\quad \text{in}\ \R^N, \\
    &\quad u\in H^1(\R^N),
    \end{aligned}\right. 
    \end{equation}
where $N\geq 1$ and $V(x)$, $Q(x)$ are spatially $1$-periodic functions of class $C^1$.
We take an approach using spatially $2L$-periodic problems ($L\gg 1$) and we show the
existence of infinitely many multi-bump solutions of $(LS)$ which are distinct
under $\Z^N$-action.
%In particular, we don't need any non-smooth variational theory.
\end{abstract}

\setcounter{equation}{0}
\section{Introduction}
We study the existence of solutions of the following spatially periodic logarithmic Schr\"odinger
equation:
    \begin{equation}\label{LS1}\tag{LS1}
    \left\{
    \begin{aligned}&-\Delta u + V(x)u=Q(x)u\log u^2\quad \text{in}\ \R^N,\\
        &u\in H^1(\R^N),
        \end{aligned}\right. 
    \end{equation}
where $N\in\N$ and $V$, $Q\in C^1(\R^N,\R)$ satisfy

\begin{enumerate}
\item[(A1)] $V(x)$, $Q(x)>0$ for all $x\in\R^N$;
\item[(A2)] $V(x)$, $Q(x)$ are $1$-periodic in each $x_i$ ($i=1,2,\cdots,N$), that is,
    \begin{eqnarray*}
        &&V(x_1,\cdots,x_i+1,\cdots,x_N)=V(x_1,\cdots,x_i,\cdots,x_N),\\
        &&Q(x_1,\cdots,x_i+1,\cdots,x_N)=Q(x_1,\cdots,x_i,\cdots,x_N)
    \end{eqnarray*}
for all $x=(x_1,x_2,\cdots,x_N)\in \R^N$ and $i=1,\cdots,N$.
\end{enumerate}

\begin{remark}
Substituting $u$ with $\lambda v$ ($\lambda>0$) in \eqref{LS1}, we get
	$$	-\Delta v+(V(x)-Q(x)\log \lambda^2)v=Q(x)v\log v^2.
	$$
Under the conditions (A2) and $Q(x)>0$ in $\R^N$, the positivity of $V(x)$ is not essential.
In fact, replacing $V(x)$ with $\widehat V(x)\equiv V(x)-Q(x)\log \lambda^2$,
we get $\widehat V(x)>0$ for a suitable $\lambda>0$ and thus we can recover positivity for any $V(x)$.
\end{remark}

\noindent
\eqref{LS1} has applications to physics (e.g. quantum mechanics, quantum optics etc.  See Zloshchastiev
\cite{Z} and references therein).

Formally solutions of \eqref{LS1} are characterized as critical points of 
    $$  I_\infty(u)=\half \int_{\R^N}|\nabla u|^2+V(x)u^2 -\half\int_{\R^N} Q(x)(u^2\log u^2-u^2).
    $$
However $\int_{\R^N} u^2\log u^2$ is not well-defined on $H^1(\R^N)$ due to the behavior of 
$s^2\log s^2$ as $s\sim 0$ and thus we cannot apply standard critical point theories to $I_\infty(u)$.
To overcome such difficulties, d'Avenia-Montefusco-Squassina \cite{dMS}, Squassina-Szulkin \cite{Sz}
and Ji-Szulkin \cite{JS}
applied non-smooth critical point theory for lower semi-continuous functionals.
In \cite{dMS}, d'Avenia, Montefusco and Squassina dealt with the case, where $V$ and $Q$ are constant,
and they showed the existence of infinitely many radially symmetric possibly sign-changing solutions.
They also showed the unique (up to translations) positive solution is so-called {\it Gausson} 
$\lambda e^{-\mu \abs x^2/2}$ for suitable constants $\lambda$, $\mu>0$.
See Bia{\l}ynicki-Birula and Mycielski \cite{BM1, BM2} for the {\it Gausson} and related topics.
See also \cite{dSZ} for logarithmic Sch\"odinger equaitons with fractional Laplacian.
In \cite{Sz}, Squassina and Szulkin considered
spatially periodic $V(x)$ and $Q(x)$ and they showed the existence of a ground state and infinitely many
possibly sign-changing solutions, which are geometrically distinct under $\Z^N$-action.  
Here we say that $u(x)$, $v(x)\in H^1(\R^N)$ are geometrically distinct if and only if 
$u(x+n)\not=v(x)$ for all $n\in\Z^N$.
In \cite{JS}, Ji and Szulkin also applied non-smooth variational framework to obtain a ground state and
infinitely many solutions for logarithmic Schr\"odinger equation under the setting: $Q(x)\equiv 1$ and
$\lim_{\abs x\to\infty}V(x)=\sup_{x\in\R^N}V(x)\in (-\infty,\infty)$ or 
$V(x)\to \infty$ as $\abs x\to \infty$.
To get infinitely many solutions, symmetry of functional $I_\infty(u)$ (i.e. evenness) and pseudo-index theories
are important in \cite{dMS,JS,Sz}. 
We also refer to Cazenave \cite{C} and Guerrero-L\'opez-Nieto \cite{GLN}
for approaches using a Banach space with a Luxemburg type norm or penalization.

%%%%

In this paper, we take another approach, which is inspired by Coti Zelati-Rabinowitz \cite{CZR3} and 
Chen \cite{Chen}, and we try to construct solutions of \eqref{LS1} through spatially $2L$-periodic solutions:
    \begin{equation}\label{eq:1.1}
    \left\{
    \begin{aligned}-&\Delta u + V(x)u=Q(x)u\log u^2 \quad \text{in}\ \R^N,\\
        &u(x+2Ln)=u(x) \qquad\qquad \ \ \text{for all}\ x\in\R^N\ \text{and}\ n\in\Z^N.
        \end{aligned}\right. 
    \end{equation}
That is, first we find a solution $u_L(x)$ of \eqref{eq:1.1} and second, after a suitable shift, we take
a limit as $L\to\infty$ to obtain a solution of \eqref{LS1}.
See also Rabinowitz \cite{R90} and Tanaka \cite{T} for earlier works.

The main purpose of this paper is to find multi-bump positive solutions of \eqref{LS1} with this approach.
In particular, we will show the existence of infinitely many geometrically distinct positive
solutions under $\Z^N$-action.  We note that in our argument symmetry of the functional is not
important.  Actually we find critical points of the following modified functional $J_\infty(u)$, which
is not even and whose critical points are non-negative solutions of \eqref{LS1} 
(see Section \ref{section:2} below).
    $$  J_\infty(u)=\half \int_{\R^N}|\nabla u|^2+V(x)u^2 -\int_{\R^N} Q(x)G(u).
    $$
Here $G(u)=\int_0^u g(s)\, ds$ and $g(u)$ is defined in \eqref{eq:2.1}. 

To state our main result, we need some preliminaries.  We set
    \begin{eqnarray*}
    &&\D=\{ u\in H^1(\R^N);\, \int_{\R^N}u^2\abs{\log u^2}<\infty\},\\
    &&{\cal N}_{\infty}=\{ u\in \D\setminus \{0\};\, 
        \int_{\R^N} \abs{\nabla u}^2+V(x)u^2-Q(x)g(u)u=0\}.
    \end{eqnarray*}
Any non-trivial non-negative solution of \eqref{LS1} lies in ${\cal N}_\infty$ and we consider
    \be\label{eq:1.2}
    b_\infty=\inf_{u\in {\cal N}_\infty} J_\infty(u).
    \ee
We denote
    \begin{eqnarray*}
        \K_\infty &=& \{ u\in \D;\, \text{$u$ is a positive solution of \eqref{LS1}}\},\\
        {[}J_\infty= c{]}_\infty &=& \{ u\in \D;\, J_\infty(u)=c\}, \\
        {[}J_\infty\leq c{]}_\infty &=& \{ u\in \D;\, J_\infty(u)\leq c\} \quad \text{for}\ c\in\R.
    \end{eqnarray*}
By the periodicity of $V$ and $Q$, we note that $\K_\infty$ is invariant under $\Z^N$-action.
Following \cite{Se, CZR1, CZR2}, to discuss multiplicity of positive solutions, we assume that 
for $\alpha>0$ small
    \begin{equation} \label{*}\tag{\textasteriskcentered}
    (\K_\infty\cap[J_\infty\leq b_\infty+\alpha]_\infty)/\Z^N\ \mbox{is finite}.
    \end{equation}
We note that if \eqref{*} does not hold, \eqref{LS1} has infinitely many geometrically distinct
solutions.  We choose a finite set 
    $$  \F_\infty=\{w^i\in \K_\infty\setminus\{ 0\};\, 1\leq i\leq m\}
    $$  
such that $w^i\neq w^j(\cdot+n)$ for $1\leq i,j\leq m$, $i\neq j$, $n\in \Z^N$, and 
$\K_\infty\cap[J_\infty=b_\infty]_\infty=\{w^i(\cdot+n);\, 1\leq i\leq m;\, n\in\Z^N\}$.

Now we can state our main result, which deals with $2$-bump solutions.

\begin{thm}\label{thm:1}
Assume (A1), (A2) and  \eqref{*}.  Then for any $r>0$, there exists $R_{r1}>0$ such that for any 
$P\in\Z^N$ with $\abs P>5R_{r1}$ and $\omega$, $\omega'\in\F_\infty$ ,
    $$  \K_\infty\cap B^{(\infty)}_{2r}(\omega+\omega'(\cdot-P))\neq\emptyset.
    $$
\end{thm}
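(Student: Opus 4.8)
The strategy is the two-scale scheme promised in the introduction: build a $2$-bump solution of the periodic problem \eqref{eq:1.1} on a large torus of size $2L$ with $L\gg\abs P$, and then let $L\to\infty$ to recover a genuine solution of \eqref{LS1} lying in the prescribed neighborhood. More precisely, I would fix $\omega,\omega'\in\F_\infty$ and $P\in\Z^N$ with $\abs P$ large, and on the space $H^1_{2L}$ of $2L$-periodic functions consider the functional $J_L$ (the $2L$-periodic analogue of $J_\infty$, with $g$ as in \eqref{eq:2.1} so that critical points are non-negative). The ansatz is to look for a critical point near the sum $\omega(x)+\omega'(x-P)$, where both bumps are regarded as $2L$-periodic via periodization; since $\omega,\omega'$ decay (Gausson-type decay, faster than any exponential, from the logarithmic nonlinearity), the periodization error and the interaction between the two bumps and their translates are exponentially small in $L$ and in $\abs P$.

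The key steps, in order: (i) Localize the variational problem. Define, for small $\rho>0$ and with $2L$-distance, the set $M_L=\{u\in H^1_{2L}: \dist_{L}(u,\ \omega+\omega'(\cdot-P)+\Z^N\text{-orbit pieces})<\rho\}$, i.e. a tubular neighborhood of the two-bump configuration; show that on an appropriate sublevel piece the ``mountain-pass'' or Nehari-type minimax value $c_L$ over paths/sets inside $M_L$ is close to $b_\infty+b_\infty$ (two copies of the ground energy), uniformly for $\abs P$ large, using the nondegeneracy input \eqref{*} only through the structure of $\F_\infty$ and the gluing estimate $J_\infty(\omega+\omega'(\cdot-P))\le 2b_\infty+o_{\abs P}(1)$. (ii) Establish a Palais–Smale / deformation argument *inside* $M_L$: show that if $u\in M_L$ has $J_L(u)$ near $c_L$ and $\norm{J_L'(u)}$ not small, one can deform while staying in $M_L$; the obstruction to leaving $M_L$ is ruled out by showing that on $\partial M_L$ the energy is strictly above $c_L$ (a ``two-bump separation'' estimate: splitting a function on $\partial M_L$ into pieces near each bump site and using that each piece is forced away from the respective ground state, so its energy exceeds $b_\infty$ by a definite amount). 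This yields a critical point $u_L\in M_L$ of $J_L$. (iii) Pass to the limit $L\to\infty$: $u_L$ is bounded in $H^1$ and in $\D$ (uniform bounds from the Nehari constraint plus the logarithmic Sobolev inequality, as in the earlier sections), and by elliptic regularity and the concentration information carried in $M_L$, $u_L\to u_\infty$ in $C^2_{loc}$ with $u_\infty$ a non-negative solution of \eqref{LS1}; moreover the localization forces $u_\infty$ to have a bump near $0$ close to $\omega$ and a bump near $P$ close to $\omega'(\cdot-P)$, hence $u_\infty\in\K_\infty$ (positivity via the strong maximum principle) and $u_\infty\in B^{(\infty)}_{2r}(\omega+\omega'(\cdot-P))$ once $\rho$, and then $R_{r1}$, are chosen small/large enough in terms of $r$. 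Choosing $R_{r1}$ so that all the exponentially-small error terms above are $<$ (a fixed fraction of) $r$ closes the argument.

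The main obstacle I expect is step (ii): because $J_\infty$ (and $J_L$) is only lower semicontinuous and not $C^1$ — the $\int Q(x)u^2\log u^2$ term is singular at $u=0$ — one cannot naively run a smooth deformation/pseudo-gradient flow, and the cut-and-paste localization that splits a configuration into its two bump components must be done compatibly with the non-smooth structure (this is presumably why the paper works with the modified $g$ and the functional $J_\infty$ rather than $I_\infty$, and why a tailored deformation lemma adapted to $M_L$ is needed, rather than an off-the-shelf min-max theorem). The second delicate point is making the two separate ``energy $>b_\infty$ on the boundary'' estimates genuinely uniform in $L$ and $\abs P$, so that the gap persists in the limit; this is where \eqref{*} (finiteness of the ground-state set modulo $\Z^N$) is essential, to get a quantitative lower bound $J_\infty(v)\ge b_\infty+\delta_0$ for $v$ outside a fixed neighborhood of $\F_\infty$'s orbit. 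Once these two ingredients are in place, the limiting argument of step (iii) is routine given the a priori estimates already developed in the paper.
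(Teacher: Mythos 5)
Your overall two-step scheme (a $2L$-periodic $2$-bump critical point near $\omega+\omega'(\cdot-P)$, then the limit $L\to\infty$ via concentration--compactness) is the same route as the paper, and your step (iii) is essentially the paper's proof of Theorem \ref{thm:1} from Theorem \ref{thm:2} and Proposition \ref{pro:3.1}. The genuine gap is in your step (ii), whose confinement mechanism is based on a false premise: there is no energy barrier on the boundary of the tube $M_L$. Since $t\mapsto J_\infty(t\omega)$ has its maximum at $t=1$, functions such as $(1\pm\epsilon)\omega+\omega'(\cdot-P)$ lie at distance $\approx\epsilon\norm{\omega}_{E_\infty}$ from the two-bump profile but have energy \emph{strictly below} $2b_\infty$; hence points of $\partial M_L$ are not ``forced away from the ground states with energy exceeding $b_\infty$ by a definite amount,'' and a deformation cannot be kept inside $M_L$ by an energy comparison alone. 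What replaces this in the paper is a quantitative gradient lower bound, uniform in $L$, $P$, $R$, on the annular set $B^{(L)}_{r_2}\setminus B^{(L)}_{r_1}$ (Step 1 of Proposition \ref{pro:5.2}, proved by concentration--compactness together with \eqref{*} and Lemma \ref{lem:5.1} (IV)), so that any flow line crossing the annulus loses a fixed amount of energy and lands in $[J_L\le 2b_\infty-\epsilon]_L$; this is then played against a two-parameter intersection argument (as in Proposition 3.4 of Coti Zelati--Rabinowitz) applied to the deformed surface $(\theta_1,\theta_2)\mapsto \theta_1\Phi_L(\psi_R\omega)+\theta_2\Phi_L(\psi_R\omega')(\cdot-P)$, which yields the lower bound $2b_L$; since $b_L\to b_\infty$ (Theorem \ref{thm:3}), the assumption $\K_L\cap B_r^{(L)}=\emptyset$ gives $2b_\infty-\epsilon/2\ge 2b_L$, a contradiction. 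Moreover, to evaluate the deformed surface as a sum of two one-bump energies one needs uniform smallness of the functions in the annulus $A_{R,P,L}$; the paper builds this in by making the flow preserve the constraint $J_{A_{R,P,L}}\le\rho$ (Lemma \ref{lem:5.3} (v)) and then replacing the deformed surface by the unique minimizer in the annulus, which decays exponentially there (Proposition \ref{pro:5.4}, Lemma \ref{lem:5.5}); this is also what gives the $L$-independent control of $\int_{D_L}u^2\log u^2$ needed for the limit. Your sketch contains no substitute for either ingredient.

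Two further corrections. The obstacle you single out --- non-smoothness of the functional --- is not the actual difficulty at the periodic level: on the torus $D_L$ the functional $J_L$ is of class $C^1$ (this is precisely the advantage of the $2L$-periodic approximation emphasized in the paper), so standard pseudo-gradient/deformation theory applies; the real work is the uniformity in $L$ of the gradient and tail estimates just described. Also, your appeal to super-exponential (Gausson-type) decay of $\omega,\omega'$ is neither proved in this setting nor needed: the paper only uses $\norm{(1-\psi_R)\omega}_{E_\infty}\to 0$ as $R\to\infty$, plus the exponential decay of the \emph{annulus minimizer} from Lemma \ref{lem:5.5}, and the interaction smallness enters only through \eqref{eq:5.20}--\eqref{eq:5.26}, not through any exponential rate in $\abs P$.
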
   

\noindent
Here we use notation:
    $$  B^{(\infty)}_r(u)=\{ v\in H^1(\R^N);\, \norm{v-u}_{H^1(\R^N)} <r\}
        \quad \text{for}\ u\in H^1(\R^N) \ \text{and} \ r>0.
    $$
We remark that such multi-bump type solutions were constructed via variational methods firstly 
by S\'er\'e \cite{Se} and Coti Zelati-Rabinowitz \cite{CZR2} for Hamiltonian systems and 
by Coti Zelati-Rabinowitz \cite{CZR1} for nonlinear elliptic equations.  
See also Alama-Li \cite{AL}, Liu-Wang \cite{LW1,LW2}, Montecchiari \cite{M} and S\'er\'e \cite{Se2}.

\begin{remark}
In Theorem \ref{thm:1}, we state the existence of 2-bump solutions.  
In a similar way, we can find $k$-bump solutions for any $k\in\N$.
\end{remark}

To show Theorem \ref{thm:1}, as we stated earlier, we will find a solution of \eqref{LS1}
through $2L$-periodic problems \eqref{PL} with large $L\in\N$:
    \begin{equation}\label{PL}\tag{PL}
        \left\{
        \begin{aligned}-&\Delta u + V(x)u=Q(x)g(u) \quad \text{in}\ \R^N,\\
        &u(x+2Ln) = u(x) \qquad\quad \ \text{for all}\ x\in\R^N \ \text{and}\ n\in\Z^N.
        \end{aligned}\right. 
    \end{equation}
That is, first we find a solution of \eqref{PL}, which has 2-bumps in $[-L,L)^N$ --- we call 
such a solution $2L$-periodic 2-bump ---  and second we take a limit as $L\to\infty$.
Such approaches were taken by Coti Zelati-Rabinowitz \cite{CZR3} and Chen \cite{Chen} for Hamiltonian systems
and strongly indefinite nonlinear Schr\"odinger equations.
They succeeded to construct multi-bump type periodic solutions through periodic multi-bump solutions.

Solutions of \eqref{PL} is characterized as critical points of 
    $$  J_L(u)= \half\int_{D_L} |\nabla u|^2+V(x)u^2 
        -\half\int_{D_L} Q(x)G(u):\, E_L\to\R,
    $$
where
    \begin{eqnarray*}
    D_L &=& [-L,L]^N,\\
    E_L &=& \{ u\in H^1_{loc}(\R^N);\, \text{$u(x)$ is $2L$-periodic in $x_i$ for all $i=1,2,\cdots,N$}\}.
    \end{eqnarray*}
In our problem, the approach using \eqref{PL} gives us a merit that the functional 
$\int_{D_L}u^2\log u^2$ and $J_L(u)$ are well-defined and of class $C^1$ since we work 
essentially in a bounded domain $D_L$ for \eqref{eq:1.1}.
Thus for \eqref{eq:1.1} we are in a better situation than the original problem in $\R^N$ and we can apply
the standard critical point theory to $J_L(u)$.
Our result for \eqref{PL} is the following theorem, in which we prove the existence of
$2L$-periodic 2-bump solutions.

\begin{thm}\label{thm:2}
Assume (A1), (A2) and \eqref{*}.  Then for any $r>0$, there exists $R_{r0}>0$ such that for any 
$L\in\N$, $P\in \Z^N$ with $\abs P>5R_{r0}$, $L>2\abs P$, and $\omega$, $\omega'\in\F_\infty$    
    $$  \K_L\cap B^{(L)}_r(\Phi_L(\psi_{R_{r0}}\omega)+\Phi_L({\psi_{R_{r0}}\omega'})(\cdot-P))\neq\emptyset.
    $$
\end{thm}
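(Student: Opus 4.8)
The plan is to construct a $2L$-periodic $2$-bump solution of \eqref{PL} by a variational argument localized near the approximate solution $\Phi_L(\psi_{R}\omega)+\Phi_L(\psi_{R}\omega')(\cdot-P)$, where $R=R_{r0}$. Working in the Hilbert space $E_L$ is advantageous: here $J_L\in C^1(E_L,\R)$ and standard deformation/minimax machinery applies. The strategy is a \emph{localized minimax via a Palais–Smale condition on an annular region}. Precisely, I would fix a small $r>0$ and a neighborhood of the form $B^{(L)}_r(\Phi_L(\psi_R\omega)+\Phi_L(\psi_R\omega')(\cdot-P))$, choose $R=R_{r0}$ large so that the two translated bumps are essentially disjointly supported (their interaction is as small as we like, using $\abs P>5R$ and $L>2\abs P$ to fit everything inside the period cell $D_L$ without wrap-around overlap), and then show that $J_L$ restricted to (a minimax class anchored in) this neighborhood has a critical value $c_L$ close to $2b_\infty$, with a corresponding critical point inside the ball.

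The key steps, in order, are: (i) \textbf{Energy estimates for the model:} show $J_L(\Phi_L(\psi_R\omega)+\Phi_L(\psi_R\omega')(\cdot-P))$ is close to $2b_\infty$, uniformly in $L$ and $P$ in the allowed range, by using that $\psi_R\omega$ is a good cutoff of a critical point $\omega$ of $J_\infty$ at level $b_\infty$, and that the cross terms decay as $R\to\infty$; this uses the exponential-type decay of solutions $\omega\in\F_\infty$ (a standard consequence of the equation, since near infinity $V(x)u-Q(x)u\log u^2$ is a superlinear coercive restoring term). (ii) \textbf{Minimax setup:} introduce a one-parameter (or finite-dimensional) family of deformations of the two-bump ansatz — e.g. dilations of each bump, as in the usual Nehari-type rescaling — so that one obtains a mountain-pass-type or linking-type value $c_L$ for $J_L$, with $c_L \le 2b_\infty + o(1)$ as $R\to\infty$, and $c_L$ bounded below by something strictly above the energy of any single bump ``escaping'' the configuration. (iii) \textbf{A localized Palais–Smale condition:} the crux — prove that $J_L$ satisfies $(PS)_{c}$ for $c$ near $2b_\infty$ \emph{along sequences staying in the annulus} $\overline{B^{(L)}_r}\setminus B^{(L)}_{r/2}$ of the two-bump ansatz. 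This is where a quantitative gradient lower bound $\norm{J_L'(u)}_{E_L^*}\ge \delta(r,R)>0$ on this annulus must be established, by a concentration–compactness / splitting argument: any would-be critical point in the annulus either stays close to the two-bump profile (contradiction with the annulus) or splits, and the split pieces have energies that are each $\ge b_\infty - o(1)$, forcing the total $> 2b_\infty + \text{(gap)}$, contradicting $c\approx 2b_\infty$; here assumption \eqref{*}, which says $(\K_\infty\cap[J_\infty\le b_\infty+\alpha]_\infty)/\Z^N$ is finite, is exactly what rules out the split limits drifting along a continuum of critical points and lets us isolate the configuration. (iv) \textbf{Deformation and conclusion:} with $c_L\approx 2b_\infty$, the gradient bound on the annulus, and the energy of the ansatz controlled, the standard deformation lemma produces a critical point of $J_L$ inside $B^{(L)}_r(\Phi_L(\psi_R\omega)+\Phi_L(\psi_R\omega')(\cdot-P))$; positivity follows because $g(u)$ is defined (via \eqref{eq:2.1}) so that critical points of $J_L$ are non-negative, and the strong maximum principle together with $u\not\equiv 0$ (guaranteed since $r$ is small relative to $\norm{\omega}$) gives $u>0$, i.e. $u\in\K_L$.

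The main obstacle I expect is step (iii): establishing the \emph{uniform-in-$L$} localized Palais–Smale / gradient estimate on the annulus around the two-bump ansatz. The difficulties are (a) the logarithmic nonlinearity makes $u\mapsto Q(x)g(u)$ only continuous, not Lipschitz, near $u=0$, so the usual $H^1$-splitting estimates need the auxiliary control $\int u^2\abs{\log u^2}$ on $\D$ and careful handling of the region where $u$ is small; (b) one must ensure the period-cell geometry ($L>2\abs P$, $\abs P>5R$) genuinely prevents the two bumps — or pieces split off from them — from interacting through periodicity, so the ``energy $\ge 2b_\infty$ after splitting'' count is valid; and (c) the limit profiles arising from a splitting sequence solve \eqref{LS1} (the limit problem), and one invokes \eqref{*} plus a Lions-type vanishing/dichotomy lemma to conclude each nonzero piece carries energy at least $b_\infty$, with strict excess if there is genuine splitting — this quantitative gap is what must beat the $o(1)$ error terms. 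Once this localized $(PS)$-type estimate is in hand, steps (i), (ii), (iv) are essentially bookkeeping with the model energies and the standard deformation lemma on the $C^1$ functional $J_L$.
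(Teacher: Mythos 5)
Your step (iii) --- the uniform-in-$L$ gradient estimate on a function-space annulus around the two-bump ansatz, proved by splitting/concentration-compactness and \eqref{*} --- is indeed one ingredient of the actual proof (it is Step 1 of Proposition \ref{pro:5.2}). The genuine gap is in steps (ii) and (iv): you assert that, with $c_L\approx 2b_\infty$ and this annulus estimate, a ``standard deformation lemma'' produces a critical point in $B_r^{(L)}(\wwP)$, but you never supply the lower-bound mechanism that makes the indirect argument close. A gradient bound on $B_{r}^{(L)}\setminus B_{r/2}^{(L)}$ only prevents escape through that annulus near level $2b_\infty$; it gives no obstruction to the (assumed critical-point-free) flow pushing the whole minimax family below $2b_\infty-\epsilon$ inside the ball. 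What is needed is, first, the correct minimax class: a genuinely two-parameter product family $(\theta_1,\theta_2)\mapsto\theta_1\Phi_L(\psi_R\omega)+\theta_2\Phi_L(\psi_R\omega')(\cdot-P)$ pinned on $\partial([0,T]^2)$, together with the intersection result of Coti Zelati--Rabinowitz giving a parameter where \emph{both} localized energies are $\geq b_L$, hence the lower bound $2b_L$; a one-parameter mountain-pass family, as you suggest, can only detect the level $b_L$ and cannot trap a two-bump configuration, and your ``bounded below by something strictly above the energy of any single escaping bump'' is exactly the statement that has to be proved, not assumed. Second, to apply that intersection count to the \emph{deformed} surface one must be able to cut it into two pieces supported near $0$ and near $P$ with negligible energy error; since $J_L$ is only approximately additive for functions that are small on the spatial annulus $\A{R}=D_L\setminus(B_R(0)\cup B_R(P))$, the paper builds the flow so that it preserves the constraint $J_{\A{R}}\leq\rho$ (part (b) of Proposition \ref{pro:5.2} and property (v) of Lemma \ref{lem:5.3}), then replaces the deformed functions by the minimizer of $J_{\A{R}}$ with fixed data outside $\A R$ (Proposition \ref{pro:5.4}), whose exponential decay in $\A{2R}$ (Lemma \ref{lem:5.5}) makes the cutting error of order $R^N e^{-cR}<\epsilon/2$. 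Only then does ``max of the cut surface $\geq 2b_L\geq 2b_\infty-\epsilon/4$'' contradict the upper bound $2b_\infty-\epsilon/2$ achieved after deformation. None of this spatial-annulus control, local minimization, or intersection argument appears in your outline, and without it the proof does not close.

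Two smaller points: the exponential decay of $\omega\in\F_\infty$ is not needed for the ansatz energy estimate (it suffices that $\norm{(1-\psi_R)\omega}_{E_\infty}\to 0$ as $R\to\infty$); and \eqref{*} is used not merely to rule out ``drifting continua'' in the splitting, but quantitatively, through the separation constant of Lemma \ref{lem:5.1} (IV), to force the split limits of a sequence trapped in the function-space annulus to be exactly $\omega$ and $\omega'(\cdot-P)$, which is what makes the annulus gradient estimate uniform in $L$.
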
       

\noindent
In Theorem \ref{thm:2}, we use notation:
    \begin{eqnarray*}
    \K_L &=& \{ u\in E_L;\, \text{$u$ is a positive solution of \eqref{PL}}\},\\
    B^{(L)}_r(u)&=&\{ v\in E_L;\, \norm{v-u}_{H^1(D_L)} <r\}
        \quad \text{for}\ u\in H^1(\R^N) \ \text{and} \ r>0.
    \end{eqnarray*}
and $\psi_{R_r}(x)$ is a suitable cut-off function around $0$ and $\Phi_L({\psi_{R_r}\omega})(x)$ is a 
$2L$-periodic extension of $\psi_{R_r}(x)$.  See Section \ref{section:2} below for a precise definition.

This paper is organized as follows:  In Section \ref{section:2}, first we introduce 
truncation of nonlinearity $u\log u^2$ and a modified functional together with its fundamental 
properties.  Second, for \eqref{PL} we observe that $J_L(u)$ has mountain pass geometry which is
uniform with respect to $L\in \N$.
In Section \ref{section:3}, we give a new concentration-compactness type result (Proposition \ref{pro:3.1}).
It enables us to take a limit as $L\to\infty$ in \eqref{PL} to obtain a solution of our original
problem \eqref{LS1}.  In Proposition \ref{pro:3.1}, uniform estimates of $\int_{D{L_j}} H(u_j)$ as well
as $\norm{u_j}_{H^1(D_{L_j})}$ for Palais-Smale type sequences $(u_j)_{j=1}^\infty$ ($u_j\in E_{L_j}$,
$L_j\to\infty$) is important.
In Section \ref{section:4}, we define mountain pass value $b_L$ for $J_L(u)$ and we study its
behavior as $L\to\infty$.  
It enables us to show that $b_\infty$ defined in \eqref{eq:1.2} is achieved and the existence of
the ground state (Theorem \ref{thm:3} and Corollary \ref{corollary:4.1}) is proved.  
Moreover, it is also important to show the existence of 2-bump solutions in Section \ref{section:5}.
In Section \ref{section:5}, we give proofs to Theorems \ref{thm:1} and \ref{thm:2}.
A gradient estimate in annular type neighborhood (Proposition \ref{pro:5.2} and deformation
lemma (Lemma \ref{lem:5.3}) are important.  Our deformation flow is constructed so that it keeps
    $$  \left\{ u\in E_L;\, \int_{D_L\setminus (B_R(0)\cup B_R(P))} \half(\abs{\nabla u}^2+V(x)u^2)
            -Q(x)G(u) \leq \rho\right\}
    $$
invariant for suitable constant $R\gg 1$ and $\rho>0$.  Together with the elliptic decay 
estimate in $D_L\setminus (B_R(0)\cup B_R(P))$, this property enables us to get a critical point
$u_L$ of $J_L(u)$ with an estimate on $\int_{D_L}u^2\log u^2$ independent of $L$.
In Appendix, we give a proof of our concentration-compactness type result (Proposition \ref{pro:3.1}).

%%%%%%%%%%%%%%%%%%%%%%%%%%%%

\setcounter{equation}{0}
\section{Preliminaries}\label{section:2}
\subsection{A modified problem}\label{section:2.1}

\noindent
To study \eqref{LS1}, we introduce the following modified problem:
    \begin{equation}\label{LS2}\tag{LS2}
        \left\{\begin{aligned}
        -&\Delta u+V(x)u =Q(x)g(u)\quad \text{in}\ \R^N,\\
        &u\in H^1(\R^N),
        \end{aligned}\right.
    \end{equation}
where $g(u)=-h(u)+f(u)$ and 
     $$ h(u)=\begin{cases}
        -u\log u^2, &\text{if}\ \abs u\leq e^{-1},\\
        2e^{-1}, & \text{if}\ u> e^{-1},\\
        -2e^{-1}, & \text{if}\ u<-e^{-1},
     \end{cases} \quad 
     f(u)=\begin{cases}
        0, &\text{if}\ u\leq e^{-1},\\
        2e^{-1}+u\log u^2, &\text{if}\ u> e^{-1}.
    \end{cases}
    $$
We note that
    \be\label{eq:2.1}
        g(u)=-h(u)+f(u)=\begin{cases}
        u\log u^2 &\text{if}\ u\geq -e^{-1},\\
        2e^{-1}   &\text{if}\ u< -e^{-1}.
        \end{cases}
    \ee
As we will see in Lemma \ref{lem:2.3}, non-zero solutions of \eqref{LS2} are positive solutions
of \eqref{LS1}.

\noindent
We set
    \begin{eqnarray*}
    &&H(u)=\int_0^u h(s)\,ds
    =\begin{cases}
        -\half u^2\log u^2+\half u^2    &\text{if}\ \abs u\leq e^{-1},\\
        \frac 2 e \abs u- \frac 1 {2e^2}     &\text{if}\ \abs u > e^{-1}, 
     \end{cases}\\
    &&F(u)=\int_0^u F(s)\,ds, \ G(u)=-H(u)+F(u)=\int_0^u g(s)\, ds.
    \end{eqnarray*}
We note that $H(u)$ is a convex part of $-\half u^2\log u^2+\half u^2$, which is sub-quadratic near $0$, 
and $F(u)$ is a super-quadratic part of $\half u^2\log u^2-\half u^2$, which is modified for $u<0$ so that
solutions of \eqref{LS2} correspond to non-negative solutions of \eqref{LS1}.

Here we give some properties of $g(u)$, $h(u)$ and $f(u)$.

\begin{lem}\label{lem:2.1}
\begin{enumerate}[(i)] 
\item $g\in C^1(\R\setminus\{ 0\})\cap C(\R)$ and 
$\frac{g(s)}{s}$ is strictly increasing in $(0,\infty)$ and strictly decreasing in $(-\infty,0)$;
\item $f\in C^1(\R)$ is positive and increasing in $[0,\infty)$. Moreover
\begin{enumerate}[(a)] 
\item $f(s)s\geq 2F(s)\geq 0$ for all $s\in \R$;
\item for any $p>2$ there is a constant $C_p>0$ depending on $p$, such that 
    $$  \abs{f(s)}\leq C_p\abs s^{p-1},\quad  F(s)\leq C_p\abs s^p \quad \text{for all}\ s\in\R;
    $$
\end{enumerate}
\item $h\in C(\R)$ is positive, increasing and concave in $[0,\infty)$ and 
$H(s)$ is convex on $\R$.  Moreover
\begin{enumerate}[(a)]
\item For all $s>0$ and $\theta \in [0,1]$
    \begin{eqnarray}
    &&\theta h(s) \leq h(\theta s),  \label{eq:2.2}\\
    &&\theta^2 H(s) \leq H(\theta s), \label{eq:2.3}\\
    &&\half h(s)s \leq H(s) \leq h(s)s; \label{eq:2.4}
    \end{eqnarray}
\item For all $s$, $t\in\R$ and $\theta \in (0,1]$
    \be\label{eq:2.5}
    \abs{h(s)t} \leq \theta H(s) +\frac 1 \theta H(t).
    \ee
\end{enumerate}
\end{enumerate}
\end{lem}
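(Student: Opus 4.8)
\textbf{Proof plan for Lemma \ref{lem:2.1}.}

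The plan is to verify the three groups of assertions essentially by explicit computation, exploiting the piecewise definitions of $h$, $f$, $g$ and the single transcendental building block $\varphi(s) = -s\log s^2$ on $(0,e^{-1}]$. First I would record the elementary facts about $\varphi$: $\varphi(0^+)=0$, $\varphi(e^{-1})=2e^{-1}$, $\varphi'(s) = -\log s^2 - 2 \geq 0$ on $(0,e^{-1}]$ (so $\varphi$ is increasing there), and $\varphi''(s) = -2/s < 0$ (so $\varphi$ is concave there). This immediately gives (iii): $h$ is continuous (the one-sided values match at $\pm e^{-1}$), positive and increasing on $[0,\infty)$ (increasing on $[0,e^{-1}]$ by $\varphi'\geq 0$, constant $=2e^{-1}$ afterward), and concave on $[0,\infty)$ (concave on $[0,e^{-1}]$, affine=constant afterward, with matching one-sided derivatives at $e^{-1}$ both equal to $0$, so no convex kink). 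Convexity of $H$ on all of $\R$ follows since $H'=h$ is nondecreasing on $[0,\infty)$ and $H$ is even (as $h$ is odd), hence $H'$ is nondecreasing on $\R$. For $f$: on $(-\infty,e^{-1}]$ it vanishes, and on $(e^{-1},\infty)$ one has $f(s) = 2e^{-1} + s\log s^2$ with $f(e^{-1}) = 0 = f(e^{-1})^-$, $f'(s) = \log s^2 + 2 > 0$ for $s>e^{-1}$, and $f'(e^{-1}^+) = 0 = f'(e^{-1}^-)$, giving $f\in C^1(\R)$, positive and increasing on $[0,\infty)$ (indeed on $[e^{-1},\infty)$, and identically zero before). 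For $g = f - h$: on $[-e^{-1},\infty)$ it equals $s\log s^2$ and on $(-\infty,-e^{-1})$ it equals $2e^{-1}$, matching at $s=-e^{-1}$; it is $C^1$ away from $0$ and continuous at $0$, and $g(s)/s = \log s^2$ on $(0,\infty)$ is strictly increasing while on $(-\infty,-e^{-1})$, $g(s)/s = 2e^{-1}/s$ is strictly decreasing and on $(-e^{-1},0)$, $g(s)/s = \log s^2$ is strictly decreasing, with the right value ordering at $-e^{-1}$ — so strict monotonicity of $g(s)/s$ holds on each half-line, which is (i).

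Next I would handle the growth and Ambrosetti–Rabinowitz-type bounds. For (ii)(a), $f(s)s - 2F(s)$ vanishes identically on $(-\infty,e^{-1}]$, so it suffices to check it is nonnegative on $(e^{-1},\infty)$: differentiate $\Psi(s) := f(s)s - 2F(s)$ to get $\Psi'(s) = f'(s)s + f(s) - 2f(s) = f'(s)s - f(s) = (\log s^2+2)s - (2e^{-1}+s\log s^2) = 2s - 2e^{-1} > 0$ for $s>e^{-1}$, and $\Psi(e^{-1})=0$, so $\Psi\geq 0$; together with $f\geq 0$ and $F\geq 0$ this gives $f(s)s\geq 2F(s)\geq 0$. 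For (ii)(b), since $f$ and $F$ vanish on $(-\infty,e^{-1}]$ and $\log s^2$ grows slower than any positive power, for each $p>2$ the ratios $|f(s)|/|s|^{p-1}$ and $F(s)/|s|^p$ are continuous on $[e^{-1},\infty)$ and tend to $0$ as $s\to\infty$, hence are bounded by some $C_p$; I would just note this rather than optimize the constant.

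For (iii)(a): inequality \eqref{eq:2.2}, $\theta h(s)\leq h(\theta s)$ for $s>0$, $\theta\in[0,1]$, is exactly the statement that $h$ is \emph{superadditive-under-scaling}, which for a concave function on $[0,\infty)$ with $h(0)=0$ is standard: $h(\theta s) = h(\theta s + (1-\theta)\cdot 0) \geq \theta h(s) + (1-\theta)h(0) = \theta h(s)$. Inequality \eqref{eq:2.3} follows from \eqref{eq:2.2} by integration: $H(\theta s) = \int_0^{\theta s} h(\tau)\,d\tau = \theta\int_0^s h(\theta\sigma)\,d\sigma \geq \theta\int_0^s \theta h(\sigma)\,d\sigma = \theta^2 H(s)$. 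Inequality \eqref{eq:2.4}, $\tfrac12 h(s)s\leq H(s)\leq h(s)s$: the upper bound is concavity ($H(s) = \int_0^s h \leq s\,h(s)$ since $h$ increasing... actually $h(\tau)\leq h(s)$ for $\tau\le s$ gives $H(s)\le h(s)s$ directly); the lower bound is $H(s) = \int_0^s h(\tau)\,d\tau \geq \int_0^s \tfrac{\tau}{s}h(s)\,d\tau = \tfrac12 h(s)s$ using $h(\tau)\geq \tfrac{\tau}{s}h(s)$, which is \eqref{eq:2.2} with $\theta = \tau/s$. Finally \eqref{eq:2.5}: by Young's inequality $|h(s)t| \leq \tfrac12\bigl(\theta^{-1}(h(s))^2\cdot\text{(scaling)} + \dots\bigr)$ — more precisely one writes $|h(s)t| \leq \tfrac12\theta^{-1} t^2 \cdot c + \dots$; the clean route is: by Young, $|h(s)t|\le \varepsilon H(t)/\!\dots$. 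Here I expect the genuine argument to be $|h(s)\,t| \le \tfrac{1}{\theta}H(\theta t/\!\dots)$ — I would prove it by the substitution trick: fix $s$, and use that $H$ is convex with $H^*$ its Legendre transform so $|h(s)t| \le$ (something); but cleaner is to reduce to $\theta=1$ via \eqref{eq:2.3}: it suffices to show $|h(s)t|\le H(s)+H(t)$, and then \eqref{eq:2.5} follows by replacing $s$ with... — this is the one step I expect to cost real thought, namely pinning down the right convex-duality statement so that $|h(s)t|\le H(s)+H(t)$ holds using $H(t)\geq \tfrac12 h(t)t$, the monotonicity of $h$, and the growth of $h$ at infinity (where $h$ is bounded, so $|h(s)t| \le 2e^{-1}|t|$, which must be absorbed into $H(s)+H(t)$ since $H(t)$ grows linearly at infinity). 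So the main obstacle is a careful case analysis for \eqref{eq:2.5} splitting on whether $|s|,|t|$ are below or above $e^{-1}$; everything else is routine calculus on explicit elementary functions.
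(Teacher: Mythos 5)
Your verifications of (i), (ii) and of \eqref{eq:2.2}--\eqref{eq:2.4} are fine and essentially what the paper does (the paper dismisses (i), (ii) as easy and proves \eqref{eq:2.2} by concavity of $h$ with $h(0)=0$, \eqref{eq:2.3} by integrating \eqref{eq:2.2}, and \eqref{eq:2.4} directly). The genuine gap is \eqref{eq:2.5}, which is the only part the paper actually writes out in detail, and which you never prove: you list several possible routes (Young's inequality, a Legendre-transform statement left as ``(something)'', a reduction to the case $\theta=1$, a case analysis in $|s|,|t|\lessgtr e^{-1}$) and explicitly defer the real work. Worse, the one route you lean on most --- ``it suffices to show $|h(s)t|\le H(s)+H(t)$ and then recover general $\theta$ by substitution'' --- does not work: to push the factor $\theta$ onto the $H(s)$ term you would need an inequality of the form $H(\theta s)\le\theta^2 H(s)$, which is exactly the \emph{reverse} of \eqref{eq:2.3} ($H$ scales sub-quadratically near $0$, not super-quadratically), while substituting into the $t$-slot only yields $|h(s)t|\le\tfrac1\theta H(s)+H(t)$, with the large factor on the wrong term. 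Likewise, in the regime $|t|>e^{-1}$ the crude bound $|h(s)t|\le 2e^{-1}|t|$ cannot simply be ``absorbed'' into $\tfrac1\theta H(t)$ because of the additive constant $-\tfrac1{2e^2}$ in $H(t)$; the inequality there genuinely needs the $\theta H(s)$ contribution as well, so the case analysis you postpone is not routine.

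For comparison, the paper's proof of \eqref{eq:2.5} is short and structural: by Fenchel's inequality $xy\le H(x)+H^*(y)$ with $x=t$, $y=\theta h(s)$ one gets $\theta h(s)t\le H^*(\theta h(s))+H(t)$, and the whole point is the estimate $H^*(\theta h(s))\le\theta^2 H(s)$. This is proved by observing that $\phi(z)=\theta h(s)z-H(z)$ satisfies $\phi'(0)\ge0$ and $\phi'(\theta s)=\theta h(s)-h(\theta s)\le0$ by \eqref{eq:2.2}, so the maximum is attained at some $a\theta s$ with $a\in[0,1]$, and then \eqref{eq:2.4} and \eqref{eq:2.3} give $\phi(a\theta s)=a\theta^2h(s)s-H(a\theta s)\le(2a-a^2)\theta^2H(s)\le\theta^2H(s)$; dividing by $\theta$ yields \eqref{eq:2.5}. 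You gestured at convex duality but did not identify this conjugate estimate, which is the missing idea; without it (or a completed case-by-case computation, which would be considerably messier), your proposal does not establish \eqref{eq:2.5}.
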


\noindent
We will give a proof of Lemma \ref{lem:2.1} at the end of this section.

\noindent
We note that for $u\in H^1(\R^N)$ 
    $$  \int_{\R^N} u^2 |\log u^2| < \infty \quad \text{if and only if}\quad
        \int_{\R^N} H(u) <\infty.
    $$
and we can write 
    $$  \D = \{ u\in H^1(\R^N);\, \int_{\R^N} H(u) <\infty\}.
    $$
By Lemma \ref{lem:2.1} we have

\begin{lem}\label{lem:2.2}
\begin{enumerate}[(i)] 
\item $u\mapsto \int_{\R^N} F(u);\, H^1(\R^N)\to \R$ is of class $C^1$;
\item For any $u$, $v\in \D$, $\int_{\R^N}h(u)v$ is well-defined;
\item ${C}_0^\infty(\R^N)$ is dense in $\D$ in the following sense:
for any $u\in \D$, there exists a sequence $\{ \varphi_n\}_{n=1}^\infty\subset {C}_0^\infty(\R^N)$
such that
    $$  \norm{\varphi_n-u}_{H^1} \to 0 \quad \text{and}\quad
        \int_{\R^N}H(\varphi_n) \to \int_{\R^N} H(u).
    $$
\end{enumerate}
\end{lem}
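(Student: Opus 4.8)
The plan is to prove the three assertions separately, in each case exploiting the structure $g = -h + f$ with $h$ sub-quadratic and $f$ super-quadratic (but polynomially controlled by Lemma~\ref{lem:2.1}(ii)(b)). For (i), the growth bound $|f(s)|\le C_p|s|^{p-1}$ with any $p>2$ means we can fix $p\in(2,2^*)$ (and $p>2$ arbitrary when $N=1,2$), so that $u\mapsto\int_{\R^N}F(u)$ is a standard Nemytskii-type functional on $H^1(\R^N)$: the embedding $H^1(\R^N)\hookrightarrow L^p(\R^N)$ together with $F(s)\le C_p|s|^p$ and $|f(s)|\le C_p|s|^{p-1}$ gives continuity and Gateaux differentiability with derivative $v\mapsto\int_{\R^N}f(u)v$, and the continuity of $u\mapsto f(u)$ from $L^p$ to $L^{p/(p-1)}$ upgrades this to $C^1$. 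This is the routine part. For (ii), the point is that for $u,v\in\D$ the integrand $h(u)v$ is absolutely integrable: I would split using \eqref{eq:2.5} with $\theta=1$, namely $|h(u)v|\le H(u)+H(v)$ pointwise, and both $\int H(u)$ and $\int H(v)$ are finite by definition of $\D$; hence $\int_{\R^N}h(u)v$ converges absolutely.

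For (iii), the density statement, I would argue in two steps. First, truncation: given $u\in\D$, set $u_R = u\,\chi_{B_R}$ or better a smooth cut-off $u\zeta_R$ with $\zeta_R$ supported in $B_{2R}$ and $\equiv 1$ on $B_R$; then $u_R\to u$ in $H^1$ by dominated convergence, and since $H(s)\ge 0$ for $|s|$ small and $H$ grows subquadratically, one checks $\int H(u_R)\to\int H(u)$ — here I would use that $H(u_R)$ is dominated by a fixed integrable function built from $H(u)$ and $u^2$ (using $H(\theta s)\le\theta^2 H(s)$-type bounds from \eqref{eq:2.3} for $|\theta|\le1$, noting $|\zeta_R|\le1$, plus the quadratic part near zero absorbed by $\|u\|_{H^1}$). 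Second, mollification: for a fixed compactly supported $w\in\D$, the mollifications $w*\rho_n\in C_0^\infty$ converge to $w$ in $H^1$ and (since $s\mapsto H(s)$ is convex, continuous, and the $w*\rho_n$ are uniformly bounded in $L^\infty$ on a fixed compact set once $w$ is — wait, $w$ need not be bounded) I would instead first also truncate $u$ in amplitude, replacing $u$ by $T_k(u)=\max(-k,\min(k,u))$, which converges to $u$ in $H^1$ and in the $\int H(\cdot)$ sense by dominated convergence using convexity/continuity of $H$ and $H(T_k(u))\le H(u) + Cu^2$. Then $w$ is bounded with compact support, $w*\rho_n$ is uniformly bounded on a fixed compact set, $w*\rho_n\to w$ in $H^1$, and $\int H(w*\rho_n)\to\int H(w)$ follows from uniform convergence of $H(w*\rho_n)$ on the compact support together with continuity of $H$. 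A diagonal argument over the three approximations (spatial cut-off, amplitude truncation, mollification) finishes (iii).

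The main obstacle is the convergence $\int_{\R^N}H(\varphi_n)\to\int_{\R^N}H(u)$ in (iii): unlike $H^1$-convergence, this is not automatic and the integrand $H(s) = -\tfrac12 s^2\log s^2 + \tfrac12 s^2$ near zero is only subquadratic, so one cannot dominate it by $Cs^2$ alone — the $s^2|\log s^2|$ term must be controlled. The device that makes it work is that all three approximation operations (multiplication by a cutoff $\le 1$, amplitude truncation, mollification) are, in a suitable sense, contractions or averaging operations for which $\int H(\cdot)$ behaves monotonically or continuously; concretely, \eqref{eq:2.3} handles the cutoff, and Jensen's inequality applied to the convex function $H$ handles mollification, giving $\int H(w*\rho_n)\le\int H(w)$, so that combined with Fatou in the other direction one gets equality in the limit. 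Assembling these estimates carefully, while not deep, is where the actual work of the lemma lies.
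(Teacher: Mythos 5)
The paper does not actually write out a proof of Lemma \ref{lem:2.2} (it is stated as a direct consequence of Lemma \ref{lem:2.1}), so your task here is to fill in a routine but nontrivial verification, and your outline does this correctly: (i) is the standard Nemytskii argument with $p\in(2,2^*)$ from Lemma \ref{lem:2.1} (ii-b); (ii) is exactly \eqref{eq:2.5} with $\theta=1$; and (iii) by spatial cut-off, amplitude truncation and mollification, with Jensen's inequality for the convex $H$ plus Fatou handling the mollification step, is a sound route to the joint convergence $\norm{\varphi_n-u}_{H^1}\to0$, $\int H(\varphi_n)\to\int H(u)$. Two small slips should be repaired. First, you quote ``$H(\theta s)\le \theta^2H(s)$'' as coming from \eqref{eq:2.3}, but \eqref{eq:2.3} states the reverse inequality $\theta^2H(s)\le H(\theta s)$; the domination you need for the cut-off and truncation steps is simpler anyway: $H$ is even and nondecreasing on $[0,\infty)$, so $\abs{\zeta_R u}\le\abs u$ and $\abs{T_k(u)}\le\abs u$ give $H(\zeta_R u)\le H(u)$ and $H(T_k(u))\le H(u)$ pointwise, and dominated convergence applies. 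Second, the claim of uniform convergence of $H(w*\rho_n)$ on the support is not justified for a merely bounded $w\in H^1$; but this is harmless, since either your Jensen--Fatou argument (which needs no boundedness at all) or dominated convergence on the fixed compact set with the constant bound $\max_{\abs s\le\norm w_{L^\infty}}H(s)$ gives $\int H(w*\rho_n)\to\int H(w)$. With these corrections the proof is complete and is the natural elaboration of what the paper leaves implicit.
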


\noindent
We say that $u\in H^1(\R^N)$ is a {\it weak solution} of \eqref{LS1} (resp. \eqref{LS2}) if
$u$ satisfies $u\in \D$ and 
    \begin{eqnarray}    
    &&\int_{\R^N} \nabla u\nabla \varphi+V(x) u\varphi -Q(x)u\varphi\log u^2=0 \nonumber\\
      \Bigl(\text{resp.}  
        &&\int_{\R^N} \nabla u\nabla \varphi+V(x) u\varphi -Q(x)g(u)\varphi=0 \ \Bigr) \label{eq:2.6}\
    \end{eqnarray}
for all $\varphi\in {C}_0^\infty(\R^N)$. 

Next lemma ensures that any solution of \eqref{LS2} is a nonnegative solution of \eqref{LS1}.
In what follows, for $u\in\R$, we denote $u^+=\max\{u,0\}, u^-=\max\{-u,0\}$.

\begin{lem}\label{lem:2.3}
For $u\in \D$, the following statements are equivalent:
\begin{enumerate}[(i)] 
\item $u(x)$ is a non-negative weak solution of \eqref{LS1};
\item $u(x)$ is a weak solution of \eqref{LS2}.
\end{enumerate}
Moreover, any non-trivial solution of \eqref{LS2} are positive on $\R^N$.
\end{lem}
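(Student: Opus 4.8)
The plan is to reduce the whole statement to two ingredients: a test‑function computation with the negative part $u^-$, and a strong maximum principle adapted to the nonlinearity $s\log s^2$. The implication (i) $\Rightarrow$ (ii) is immediate: if $u\in\D$ is a non‑negative weak solution of \eqref{LS1}, then $u(x)\ge 0>-e^{-1}$ a.e., so by \eqref{eq:2.1} $g(u)=u\log u^2$ pointwise, and the two weak formulations in \eqref{eq:2.6} coincide; all integrals appearing are finite by Lemma \ref{lem:2.2}.

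For (ii) $\Rightarrow$ (i) the heart of the matter is to show $u\ge 0$. Using Lemma \ref{lem:2.2}(ii)--(iii) --- density of $C_0^\infty(\R^N)$ in $\D$, together with the bound \eqref{eq:2.5} which controls the $h(u)$–term along an approximating sequence --- the weak formulation \eqref{eq:2.6} for \eqref{LS2} extends from $\varphi\in C_0^\infty(\R^N)$ to all $\varphi\in\D$, and one checks $u^-\in\D$ since $\int (u^-)^2\abs{\log (u^-)^2}=\int_{\{u<0\}}u^2\abs{\log u^2}<\infty$. Taking $\varphi=u^-$ and using $\nabla u\cdot\nabla u^-=-\abs{\nabla u^-}^2$ and $u\,u^-=-(u^-)^2$ a.e., we get
$$ -\int_{\R^N}\abs{\nabla u^-}^2+V(x)(u^-)^2+Q(x)g(u)u^- = 0. $$
From \eqref{eq:2.1} one sees $g(s)>0$ for every $s<0$: indeed $g(s)=s\log s^2>0$ when $-e^{-1}\le s<0$ (as $\log s^2<0$ there), and $g(s)=2e^{-1}>0$ when $s<-e^{-1}$. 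Hence $g(u)u^-\ge 0$ a.e., and since $V,Q>0$ each of the three terms on the left is $\le 0$; therefore all of them vanish. In particular $\int_{\R^N}V(x)(u^-)^2=0$, so $u^-\equiv 0$, i.e. $u\ge 0$. As above, $g(u)=u\log u^2$ then holds and $u$ is a non‑negative weak solution of \eqref{LS1}.

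For the last assertion, let $u$ be a non‑trivial solution of \eqref{LS2}; by the above $u\ge 0$, $u\not\equiv 0$, and $u$ solves \eqref{LS1}. Since $h$ is bounded and $f$ has subcritical growth (Lemma \ref{lem:2.1}(ii)--(iii)), the right‑hand side $Q(x)g(u)-V(x)u$ lies in $L^q_{loc}(\R^N)$ for a suitable $q$, so a standard bootstrap yields $u\in W^{2,q}_{loc}\cap C^{1,\alpha}_{loc}(\R^N)$; in particular $u$ is continuous. Suppose $u(x_0)=0$ for some $x_0$. By continuity there is a ball $B_\rho(x_0)$ on which $u$ is so small that $-\Delta u+V(x)u=Q(x)u\log u^2$ gives
$$ 0\le \Delta u=\bigl(V(x)+Q(x)\log(1/u^2)\bigr)u\le C\,u\log(1/u^2)=:\beta(u)\quad\text{in }B_\rho(x_0), $$
for some constant $C>0$. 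The function $\beta(s)=Cs\log(1/s^2)$ is continuous, nondecreasing near $0$, with $\beta(0)=0$, and its primitive $\Phi(s)=\int_0^s\beta$ satisfies $\Phi(s)\sim \frac{C}{2}\,s^2\log(1/s^2)$, so $\int_{0^+}\Phi(s)^{-1/2}\,ds=+\infty$. Thus $u$ is a non‑negative supersolution of $-\Delta v+\beta(v)=0$, and the strong maximum principle of Vázquez type applies (alternatively, compare $u$ directly with radial subsolutions of $\Delta v=\beta(v)$): since $u$ vanishes at $x_0$, it vanishes on $B_\rho(x_0)$, and then by connectedness $u\equiv 0$ on $\R^N$, contradicting non‑triviality. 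Hence $u>0$ everywhere.

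I expect two points to be the real obstacles. First, in (ii) $\Rightarrow$ (i) one must genuinely justify testing \eqref{eq:2.6} against $u^-$: the term $\int Q(x)h(u)u^-$ is not continuous under mere $H^1$‑convergence of the test function, so the density statement of Lemma \ref{lem:2.2}(iii), tuned to the convergence of $\int_{\R^N}H(\cdot)$, is needed rather than plain $H^1$‑density. Second, and conceptually the main difficulty, positivity cannot come from the classical strong maximum principle or Harnack inequality, since the effective zeroth‑order coefficient $V-Q\log u^2$ blows up near a zero of $u$; one must use a maximum principle of Vázquez type, which works precisely because $\beta(s)=Cs\log(1/s^2)$ obeys the Osgood‑type condition $\int_{0^+}\Phi^{-1/2}=\infty$.
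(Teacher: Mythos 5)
Your proposal is correct and follows essentially the same route as the paper: the paper also proves (ii) $\Rightarrow$ (i) by testing \eqref{eq:2.6} with $\varphi=u^-$ (it writes $g=-h+f$ and uses $f(u)u^-=0$, $h(u)u^-=-h(u^-)u^-$, which is equivalent to your observation that $g(s)>0$ for $s<0$), and it settles positivity by appealing to the strong maximum principle. Your write-up is simply more detailed than the paper's one-line proof, in particular in justifying $u^-$ as an admissible test function via Lemma \ref{lem:2.2} and in noting that the maximum principle required is of V\'azquez (Osgood-condition) type because the effective coefficient $V-Q\log u^2$ is unbounded near a zero of $u$.
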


\begin{proof}
We show just (ii) implies (i).  
Suppose  $u\in \D$ is a solution to \eqref{LS2}.  
We take $\varphi=u^-$ in \eqref{eq:2.6}.  Since $f(u)u^-=0$ and $h(u)u^-=-h(u^-)u^-$,
$$\int_{\R^N}(\abs{\nabla u^-}^2+V(x)(u^-)^2+h(u^-)u^-)=0.$$
So $u^-=0$ and $u\geq 0$. 
Moreover, if $u\neq 0$, then $u>0$ by the strong maximum principle.
\end{proof}

\noindent
To study the solution to \eqref{LS2}, we consider the energy functional 
$J_\infty: E_\infty\rightarrow \R\cup\{+\infty\}$ associated to \eqref{LS2}:
    $$
    J_\infty(u)=\half\norm u^2_{E_\infty}-\int_{\R^N}Q(x)G(u),
    $$
where we write $E_\infty=H^1(\R^N)$ and we introduce the following inner product and norm to $E_\infty$:
    \begin{eqnarray*}
	&&\inp{u,v}_{E_\infty}=\int_{\R^N}\nabla u\nabla v+ V(x)uv,\\
    &&\norm{u}_{E_\infty}=\sqrt{\inp{u,u}_{E_\infty}}
		\quad \text{for}\ u,v\in E_\infty.
	\end{eqnarray*}
To study \eqref{LS2}, we introduce the notion of derivatives and critical points of $J_\infty$.

\begin{defn}\label{defn:2.1}
\begin{enumerate}[(i)]
\item For  $u$, $v\in \D$, we define
    $$  J_\infty'(u)v\equiv\inp{u,v}_{E_\infty}-\int_{\R^N}Q(x)g(u)v.
    $$
We note that $\int_{\R^N} Q(x)h(u)v$ is well-defined by Lemma \ref{lem:2.2} (ii).
\item We say that $u\in E_\infty$ is a {\sl critical point} of $J_\infty$ if $u\in\D$ and 
$J_\infty'(u)v=0$ for all $v\in\D$.
We also say that $c\in\R$ is a {\sl critical value} for $J_\infty$ if there exists a critical point 
$u\in E_\infty$ such that  $J_\infty(u)=c$.  We also use notation
    $$  \K_\infty=\{u\in\D;\, J_\infty'(u)v=0 \ \text{for all}\ v\in \D\}.
    $$
\end{enumerate}
\end{defn}

\begin{remark}
By Lemma \ref{lem:2.2} (iii), $u\in\D$ is a critical point of $J_\infty$ if and only if
    $$  J_\infty'(u)\varphi=0 \quad \text{for all}\ \varphi\in {C}_0^\infty(\R^N).
    $$
\end{remark}

%%%%%%%%%%%%%

\begin{remark}\label{remark:2.2}
There exists $\rho_\infty>0$ such that
    $$	\norm w_{E_\infty} \geq \rho_\infty \quad \text{for all}\ w\in\K_\infty\setminus\{ 0\}.
    $$
In fact, we have for small $\rho_\infty>0$
	$$	J_\infty'(u)u \geq \frac 1 4 \norm u_{E_\infty}^2 \quad \text{for all}\ u\in \D 
		\ \text{with}\ \norm u_{E_\infty}\leq \rho_\infty.
	$$
\end{remark}

\noindent
At the end of this section, we give a proof of Lemma \ref{lem:2.1}

\begin{proof}[Proof of Lemma \ref{lem:2.1}]
We can easily show (i) and (ii).  Here we give a proof to (iii).
(iii-a) By the concavity of $s\mapsto h(s);\, [0,\infty)\to \R$, we have \eqref{eq:2.2}.
Integrating \eqref{eq:2.2}, we get \eqref{eq:2.3}.  We can also verify \eqref{eq:2.4} easily.\\
(iii-b) It suffices to show \eqref{eq:2.5} for $s$, $t>0$ and $\theta\in (0,1]$.  
Setting $H^*(y)=\sup_{z\in\R} (yz-H(z))$, i.e., the convex conjugate of $H(x)$, we
have the following Fenchel's inequality:
    $$  xy \leq H(x)+H^*(y) \quad \text{for all}\ x, y\in\R.
    $$
Setting $x=t$, $y=\theta h(s)$, we have
    $$  \theta h(s)t \leq H^*(\theta h(s)) + H(t).
    $$
To show \eqref{eq:2.5}, it suffices to show 
    \be\label{eq:2.7}
    H^*(\theta h(s)) \leq \theta^2 H(s).
    \ee
We set $\phi(z)=\theta h(s)z-H(z)$.  Then we have $H^*(\theta h(s))=\sup_{z\in \R}\phi(z)$.
Noting by \eqref{eq:2.2}
    $$  \phi'(0)=\theta h(s)\geq 0,\quad \phi'(\theta s)=\theta h(s) -h(\theta s) \leq 0,
    $$
$\phi(z)$ takes its maximum in $[0,\theta s]$ and there exists $a\in [0,1]$ such that
    \begin{eqnarray*}
    H^*(\theta h(s)) &=& \phi(a\theta s) = a\theta^2 h(s)s -H(a\theta s) \\
    &\leq& 2a\theta^2 H(s)-(a\theta)^2 H(s) = (2a-a^2)\theta^2 H(s)\\
    &\leq& \theta^2 H(s).
    \end{eqnarray*}
Here we used \eqref{eq:2.3} and \eqref{eq:2.4}.  Thus we obtain \eqref{eq:2.7}.
\end{proof}

%%%%%%%%%%%%%%%%%%%%%%%%%%

\subsection{$2L$-periodic problems}
As stated in Introduction, we construct a solution of \eqref{LS2} through $2L$-periodic problems.

For $L\in\mathbb N$, we set $D_L=[-L,L]^N$ and we consider a Hilbert space
    $$  E_L=\{ u(x)\in  H^1_{loc}(\R^N);\, u(x+2Ln)=u(x)\ \mbox{for all}\ n\in\Z^N\}
    $$ 
with an inner product and a norm:
    \begin{eqnarray*}
    &&\inp{u,v}_{E_L}=\int_{D_L}\nabla u\nabla v+ V(x)uv, \\
    &&\norm u_{E_L}=\sqrt{\inp{u,u}_{E_L}}\quad \text{for}\ u,\, v\in E_L.
    \end{eqnarray*}
We denote the dual space of $E_L$ and its norm by $E_L^*$ and $\norm{\cdot}_{E_L^*}$.\\
For a subset $U\subset D_L$, we also denote
    $$  \norm u_{E_L(U)} =\left(\int_{U}\abs{\nabla u}^2+V(x)u^2\right)^\half
        \quad \text{for}\ u\in E_L.
    $$
For $u\in E_L$ we define $J_L(u)$ by
    $$  J_L(u)=\half\norm u^2_{E_L}-\int_{D_L}Q(x)G(u).
    $$
The equation corresponding to $J_L(u)$ is \eqref{PL}.
We note that we are essentially working in a bounded domain $D_L$ and $\int_{D_L} Q(x)H(u)$ is
well-defined on $E_L$.  We have

\begin{lem}
\begin{enumerate}[(i)]
\item $E_L\to \R;\, u\mapsto \int_{D_L} Q(x)H(u)$ is of class $C^1$;
\item $J_L(u)\in C^1(E_L,\R)$ and any critical point of $J_L(u)$ is a weak solution of \eqref{PL},
that is,
    $$  \int_{D_L} \nabla u\nabla \varphi+V(x)u\varphi-Q(x)g(u)\varphi=0
        \quad \text{for all}\ \varphi\in E_L,
    $$
\end{enumerate}
\end{lem}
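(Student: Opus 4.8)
The plan is to treat the two nonlinear pieces of $J_L$ separately and to use throughout that we work on the bounded cell $D_L=[-L,L]^N$ (equivalently, on the torus $\R^N/(2L\Z)^N$): since $V$ is continuous, positive and periodic it is bounded between positive constants on $D_L$, so $\norm\cdot_{E_L}$ is equivalent to the $H^1(D_L)$-norm on the periodic class, and the Sobolev embeddings $E_L\hookrightarrow L^p(D_L)$ hold for $p\in[1,2^*)$ (and for all finite $p$ if $N\le 2$). The structural fact making (i) easy is that $h$ is bounded and continuous on $\R$: on $(0,e^{-1}]$ one has $h(s)=2s\log(1/s)\in[0,2e^{-1}]$, and $h\equiv\pm 2e^{-1}$ outside, so $\norm h_{L^\infty(\R)}=2e^{-1}$; correspondingly $0\le H(s)\le C(1+|s|)$, whence $\Psi(u):=\int_{D_L}Q(x)H(u)$ is finite for every $u\in E_L$ because $|D_L|<\infty$.

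For (i), I would first establish Gateaux differentiability with $\Psi'(u)v=\int_{D_L}Q(x)h(u)v$. Since $H\in C^1(\R)$ with $H'=h$, the mean value theorem gives $|H(a+b)-H(a)|\le\norm h_{L^\infty}|b|$, so the difference quotients $t^{-1}\big(H(u+tv)-H(u)\big)$ converge a.e.\ to $h(u)v$ and are dominated by $\norm h_{L^\infty}|v|\in L^1(D_L)$; dominated convergence yields the directional derivative, and $\abs{\Psi'(u)v}\le\norm Q_{L^\infty}\norm{h(u)}_{L^2(D_L)}\norm v_{L^2(D_L)}$ shows $\Psi'(u)\in E_L^*$. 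For continuity of $u\mapsto\Psi'(u)$, I would invoke continuity of the Nemytskii operator attached to the bounded continuous function $h$: if $u_n\to u$ in $E_L$ then $u_n\to u$ in $L^2(D_L)$, so along any subsequence a further subsequence converges a.e., giving $h(u_n)\to h(u)$ a.e., and being dominated by $\norm h_{L^\infty}$, in $L^2(D_L)$; the subsequence principle upgrades this to $h(u_n)\to h(u)$ in $L^2(D_L)$, and then $\norm{\Psi'(u_n)-\Psi'(u)}_{E_L^*}\le\norm Q_{L^\infty}\norm{h(u_n)-h(u)}_{L^2(D_L)}\to 0$. A continuous Gateaux derivative is Fréchet, so $\Psi\in C^1(E_L,\R)$.

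For (ii), I would write $J_L(u)=\half\norm u_{E_L}^2-\int_{D_L}Q(x)F(u)+\int_{D_L}Q(x)H(u)$. The quadratic term is smooth with derivative $\inp{u,v}_{E_L}$; the term $u\mapsto\int_{D_L}Q(x)F(u)$ is $C^1$ with derivative $\int_{D_L}Q(x)f(u)v$ by the same Nemytskii-operator argument (compare Lemma \ref{lem:2.2}(i)), now using the growth bounds $|f(s)|\le C_p|s|^{p-1}$, $F(s)\le C_p|s|^p$ of Lemma \ref{lem:2.1}(ii-b) with a fixed $p\in(2,2^*)$ in the Sobolev range on $D_L$; and (i) handles the $H$-term. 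Since $g=-h+f$, adding up gives $J_L\in C^1(E_L,\R)$ with
$$ J_L'(u)v=\inp{u,v}_{E_L}-\int_{D_L}Q(x)g(u)v=\int_{D_L}\nabla u\nabla v+V(x)uv-Q(x)g(u)v \quad\text{for all }v\in E_L. $$
A critical point of $J_L$ is by definition a $u$ with $J_L'(u)v=0$ for all $v\in E_L$, which is exactly the asserted weak form of \eqref{PL}.

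I do not expect a genuine obstacle; the only point needing a little care is differentiating $\int_{D_L}Q(x)H(u)$, where $h=H'$ fails to be Lipschitz (in fact differentiable) at $0$, so one cannot run a mean value estimate based on a local $L^\infty$ bound for $h'$. What saves the argument is that $h$ itself is globally bounded and $|D_L|<\infty$, which is precisely what makes dominated convergence and the Nemytskii continuity applicable — the simplification gained by passing to the periodic problem on a bounded cell, as advertised in the introduction.
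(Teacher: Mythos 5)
Your proof is correct, and the paper in fact states this lemma without proof, treating it as standard. Your argument — splitting off the $H$-term and exploiting that $h$ is globally bounded and continuous so that dominated convergence and Nemytskii continuity work on the bounded cell $D_L$, while the $F$-term is handled by the subcritical growth bounds of Lemma \ref{lem:2.1} (ii-b) and the Sobolev embedding — is exactly the standard argument the paper's remark about ``essentially working in a bounded domain'' alludes to, so it fills the omitted proof in the intended way.
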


\noindent
We denote the critical point set of $J_L$ by
    $$  \K_L=\{u\in E_L; J_L'(u)=0\}.
    $$
As in Lemma \ref{lem:2.3}, we have

\begin{lem}
Any solution to \eqref{PL}, i.e. any critical point of $J_L(u)$, is nonnegative.
\end{lem}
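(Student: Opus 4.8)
The plan is to mimic the proof of Lemma \ref{lem:2.3} almost verbatim, working on the bounded cell $D_L$ instead of $\R^N$. Let $u\in E_L$ be a critical point of $J_L$. By the preceding lemma, $u$ is a weak solution of \eqref{PL}, i.e.
$$\int_{D_L}\nabla u\nabla\varphi+V(x)u\varphi-Q(x)g(u)\varphi=0\qquad\text{for all }\varphi\in E_L.$$
First I would check that $u^-$ is an admissible test function: truncation at $0$ preserves membership in $H^1_{loc}(\R^N)$ and $2L$-periodicity, so $u^-\in E_L$. Since $h$ is bounded on $\R$, the integral $\int_{D_L}Q(x)h(u)u^-$ is trivially finite, so none of the integrability subtleties one meets on $\R^N$ appear here.

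Next I would plug $\varphi=u^-$ into the weak formulation and use the elementary a.e. identities $\nabla u\cdot\nabla u^-=-\abs{\nabla u^-}^2$ and $uu^-=-(u^-)^2$, together with the sign structure of the truncated nonlinearity coming from \eqref{eq:2.1}: $f(u)u^-=0$ because $f\equiv 0$ on $(-\infty,e^{-1}]$ while $u<0$ wherever $u^->0$, and $h(u)u^-=-h(u^-)u^-$ because $h$ is odd on $\R$. Hence $-g(u)u^-=h(u)u^--f(u)u^-=-h(u^-)u^-$, and the weak equation collapses to
$$\int_{D_L}\abs{\nabla u^-}^2+V(x)(u^-)^2+Q(x)h(u^-)u^-=0.$$
By (A1) we have $V,Q>0$, and by Lemma \ref{lem:2.1}(iii) $h\geq 0$ on $[0,\infty)$, so $h(u^-)u^-\geq 0$ pointwise. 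Thus every term in the last display is nonnegative and must vanish; in particular $\norm{u^-}_{E_L}=0$, so $u^-\equiv 0$ and $u\geq 0$. (If in addition one wants nontrivial solutions to be strictly positive, as in Lemma \ref{lem:2.3}, one applies the strong maximum principle to the $2L$-periodic extension, exactly as there.)

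I do not expect any genuine obstacle: the only points needing a line of justification are the admissibility of $u^-$ in $E_L$ and the bookkeeping of signs dictated by the definition \eqref{eq:2.1} of $g$ (so that the nonlinear term contributes with a favorable sign); everything else is word-for-word the $\R^N$ argument already carried out for Lemma \ref{lem:2.3}.
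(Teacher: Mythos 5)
Your proof is correct and is essentially the paper's own argument: the paper simply invokes the proof of Lemma \ref{lem:2.3} ("As in Lemma \ref{lem:2.3}"), i.e.\ testing the weak equation with $\varphi=u^-\in E_L$ and using $f(u)u^-=0$, $h(u)u^-=-h(u^-)u^-$ to force $\norm{u^-}_{E_L}=0$. Your added remarks on the admissibility of $u^-$ and the boundedness of $h$ are fine and only make explicit what the paper leaves implicit.
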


\noindent
As a fundamental property of $J_L(u)$, we see $J_L(u)$ possesses a mountain pass geometry  
uniformly in $L\in\N$.

\begin{lem}\label{lem:2.6}
\begin{enumerate}[(i)]
\item There are constants $C_1$, $r_1>0$ independent of $L$ such that
    $$  J_L(u)\geq C_1 \qquad \text{for}  \ \norm u_{E_L}=r_1;
    $$
\item there exists $u_0\in H_0^1(D_1)$ such that
    $$  \norm{\Phi_L({u_0})}_{E_L}>r_1 \quad \text{and}\quad J_L(\Phi_L({u_0}))<0.  
    $$
where $r_1$ is the number in (i).  Here we regard $u_0(x)$ as an element in $H_0^1(D_L)$ 
and $\Phi_L({u_0})\in E_L$ is a $2L$-periodic extension of $u_0(x)$.
In particular, there exists a constant $C_2>0$ independent of $L\in\N$ such that
    $$  \max_{t\in [0,1]} J_L(t\Phi_L({u_0})) \leq C_2.
    $$

\end{enumerate}
\end{lem}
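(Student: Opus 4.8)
The plan is to verify the two standard mountain-pass ingredients, paying attention only to the uniformity in $L$. For part (i), I would start from the elementary pointwise bound on $G$. Since $G(u)=-H(u)+F(u)$ with $H(u)\ge 0$, we have $G(u)\le F(u)$, and by Lemma \ref{lem:2.1}(ii-b) there is, for any fixed $p\in(2,2^*)$ (with $2^*$ the Sobolev exponent; take $p=3$ if $N=1,2$), a constant $C_p$ with $F(u)\le C_p|u|^p$. Hence
    $$  J_L(u)\ge \half\norm u_{E_L}^2 - (\sup Q)\,C_p\int_{D_L}|u|^p.
    $$
The key point is that the Gagliardo--Nirenberg--Sobolev inequality on the torus $\R^N/(2L\Z)^N$ can be applied uniformly: since $2L\ge 2$, the embedding constant for $E_L\hookrightarrow L^p(D_L)$ can be bounded by the Sobolev constant on a fixed unit cube after rescaling, or more simply one notes $\int_{D_L}|u|^p\le C\norm u_{E_L}^p$ with $C$ independent of $L$ (using that $V$ is bounded below by a positive constant by (A1), (A2) and continuity/periodicity, so $\norm{\cdot}_{E_L}$ dominates the full $H^1(D_L)$-norm with an $L$-independent constant). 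This gives $J_L(u)\ge \half r_1^2 - C' r_1^p$ on $\norm u_{E_L}=r_1$, and choosing $r_1>0$ small enough (independent of $L$) makes the right-hand side a positive constant $C_1$.

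For part (ii), I would fix a single nonzero bump $u_0\in H_0^1(D_1)$, for instance a normalized function with $\|u_0\|_\infty$ large on a ball well inside $D_1$, and view it as an element of $H_0^1(D_L)$ for every $L\ge 1$ by zero extension; $\Phi_L(u_0)$ is then its $2L$-periodic extension. Because $u_0$ is supported in $D_1$, for every $L$ we have $\norm{\Phi_L(u_0)}_{E_L}=\norm{u_0}_{H_0^1(D_1)}$ and $\int_{D_L}Q(x)G(\Phi_L(u_0))=\int_{D_1}Q(x)G(u_0)$, so in fact $J_L(\Phi_L(u_0))=J_1(u_0)$ and the curve $t\mapsto J_L(t\Phi_L(u_0))$ is the same for all $L$; in particular $\max_{t\in[0,1]}J_L(t\Phi_L(u_0))=\max_{t\in[0,1]}J_1(tu_0)=:C_2$ is $L$-independent once $u_0$ is chosen. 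To arrange $J_1(u_0)<0$ and $\norm{\Phi_L(u_0)}_{E_L}>r_1$, I would instead take a fixed profile $v_0\in H_0^1(D_1)\setminus\{0\}$ and set $u_0=Tv_0$ for $T>0$ large: using $G(s)=\half s^2\log s^2-\half s^2$ for $s\ge e^{-1}$ together with (A1), one checks $J_1(Tv_0)=\half T^2\norm{v_0}^2-\int Q(x)G(Tv_0)\to-\infty$ as $T\to\infty$ (the $s^2\log s^2$ term beats the quadratic term), and simultaneously $\norm{\Phi_L(Tv_0)}_{E_L}=T\norm{v_0}_{H_0^1(D_1)}>r_1$ for $T$ large; then fix one such $T$ to define $u_0$.

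The only genuinely delicate point is the $L$-independence in part (i): one must make sure the Sobolev/embedding constant does not degenerate as $L\to\infty$. This is handled by the scaling remark above — covering $D_L$ by unit cubes and summing, or rescaling — which yields $\norm u_{L^p(D_L)}\le C_{N,p}\norm u_{H^1(D_L)}$ with $C_{N,p}$ depending only on $N,p$; combined with the uniform equivalence of $\norm{\cdot}_{E_L}$ and $\norm{\cdot}_{H^1(D_L)}$ coming from $\inf_{\R^N}V>0$ and $\sup_{\R^N}V<\infty$, this closes the argument. Part (ii) is essentially trivial once the support of $u_0$ is kept inside $D_1$, since then everything reduces to the fixed functional $J_1$.
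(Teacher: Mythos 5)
Your argument is correct and essentially the same as the paper's: for (i) you discard $H\geq 0$, bound $F(u)\leq C_p\abs{u}^p$, and use a Sobolev embedding constant uniform in $L$ (the cube-summation argument, exactly as in the paper's Lemma \ref{lem:3.2}), and for (ii) you scale a fixed profile supported in $D_1$, noting that all relevant integrals reduce to $D_1$ and are therefore independent of $L$. The only point to fix: since $g$ is truncated on the negative half-line, $G(s)$ grows only linearly as $s\to-\infty$, so the identity $G(s)=\half s^2\log s^2-\half s^2$ and the conclusion $J_1(Tv_0)\to-\infty$ require $v_0^+\neq 0$; choose $v_0\geq 0$ (as the paper does with its $u_1$), otherwise $J_1(Tv_0)\to+\infty$.
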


\begin{proof}       
(i) 
Since $H(u)\geq 0$,
    $$  J_L(u)\geq \half\norm u_{E_L}^2-C_p\norm{Q}_{L^\infty}\int_{D_L}\abs u^p 
        \geq \half\norm u_{E_L}^2-C_p'\norm u_{E_L}^p.
    $$
Thus the conclusion (i) holds for small $r_1>0$ and for a suitable constant $C_1>0$.
(ii)  We choose  $u_1\in H_0^1(D_1)\setminus\{0\}$ with $u_1(x)\geq0$ in $D_1$ and consider 
$J_L(t\Phi_L({u_1}))$ for $t>0$.  Here $\Phi_L({u_1})$ is a $2L$-periodic extension of $u_1(x)$.
    \begin{eqnarray*}
    J_L(t\Phi_L({u_1}))&=&\half t^2\norm {\Phi_L({u_1})}^2_{E_L}+\int_{D_L}Q(x)G(t\Phi_L({u_1}))\\
    &=&\half t^2\norm {\Phi_L({u_1})}^2_{E_L}+\half\int_{D_L}Q(x)\bigl((tu_1)^2\log(tu_1)^2-(tu_1)^2\bigr) \\
    &=&\half t^2\Big\{\norm {u_1}_{H_0^1(D_1)}^2+\int_{D_1}Q(x)u_1^2-\int_{D_1}Q(x)u_1^2\log u_1^2\Big\}\\
    &&      -\half t^2\log t^2\int_{D_1}Q(x)u_1^2.
    \end{eqnarray*}
Since $J_L(t\Phi_L({u_1}))\to -\infty$ as $t\to\infty$, we can choose a $t_1>0$ such that $J_L(t_1\Phi_L({u_1}))<0$.
Thus for $u_0=t_1u_1$, the conclusion (ii) holds.
\end{proof}

%%%%%%%%%%%%%%%%%%%%% 

\noindent
The following property of $J_L(u)$ is based on a special feature of our nonlinearity:
    \begin{equation}\label{eq:2.8}
    G(s)-\half g(s)s = -\half s^2 \quad \text{for}\ s\geq 0.
    \end{equation}
It is useful to check Palais-Smale condition and concentration-compactness type result for $J_L(u)$.

\begin{lem}\label{lem:2.7}
For $\delta>0$ there exists a constant $C_3>0$ independent of $L\in\N$ such that 
for any $L\in\N$ and $u\in E_L$ with
    \begin{equation}\label{eq:2.9}
	\norm{J_L'(u)}_{E_L^*} \leq \delta,
    \end{equation}
we have
\begin{enumerate}[(i)]
\item $\norm{u^-}_{E_L} \leq \delta$, $\int_{D_L} H(u^-)$, $\int_{D_L} h(u^-)u^-\leq C_3\delta^2$;
\item Moreover, assume that for $M>0$
	\begin{equation}\label{eq:2.10}	
	J_L(u)\leq M.
	\end{equation}
Then there exists a constant $C_4(M,\delta)>0$ independent of $L\in\N$ such that 
$\norm u_{E_L}$, $\int_{D_L} H(u)$, $\int_{D_L} h(u)u \leq C_4(M,\delta)$.
\item For a critical point $u\in E_L$ of $J_L(u)$, we have
    \begin{equation} \label{eq:2.11}
    J_L(u)=\half \int_{D_L} Q(x)u^2.
    \end{equation}
\end{enumerate}
\end{lem}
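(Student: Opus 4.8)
The plan is to exploit the identity \eqref{eq:2.8}, which gives
$J_L(u)-\half J_L'(u)u = \half\int_{D_L}Q(x)u^2$ for nonnegative $u$; in general, for arbitrary $u\in E_L$ we only have this on the part where $u\geq 0$, and on $\{u<0\}$ we must track the contribution of $h(u^-)u^-$ and $H(u^-)$ separately. The first step is to test \eqref{eq:2.9} with $\varphi=u^-\in E_L$. Since $f(u)u^-=0$ and $h(u)u^-=-h(u^-)u^-$ (as in the proof of Lemma \ref{lem:2.3}), and $g=-h+f$, we get
    $$ -J_L'(u)u^- = \norm{u^-}_{E_L}^2 + \int_{D_L}Q(x)h(u^-)u^-,
    $$
so $\norm{u^-}_{E_L}^2 + \int_{D_L}Q(x)h(u^-)u^- \leq \delta\norm{u^-}_{E_L}$. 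Both terms on the left are nonnegative (note $h(s)s\geq 0$ for all $s$, and $Q>0$), hence $\norm{u^-}_{E_L}\leq\delta$, and using (A1) (so $Q$ is bounded below on $D_L$ by a positive constant independent of $L$ by periodicity, say $Q(x)\geq q_0>0$) we get $\int_{D_L}h(u^-)u^-\leq \delta^2/q_0$. By \eqref{eq:2.4}, $\int_{D_L}H(u^-)\leq\int_{D_L}h(u^-)u^-\leq\delta^2/q_0$. This gives (i) with $C_3$ depending only on $q_0$.

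For (ii), I would combine \eqref{eq:2.9} and \eqref{eq:2.10} with the identity coming from \eqref{eq:2.8}. Write
    $$ J_L(u) - \half J_L'(u)u = \int_{D_L} Q(x)\Bigl(\half g(u)u - G(u)\Bigr).
    $$
On $\{u\geq 0\}$ the integrand is $\half Q(x)u^2$ by \eqref{eq:2.8}; on $\{u<0\}$ one computes $\half g(u)u - G(u)$ explicitly from \eqref{eq:2.1} and the formula for $G$, and bounds it in absolute value by $C(H(u^-)+ (u^-)^2 + 1_{\{u^-\le e^{-1}\}})$ — in any case by a constant multiple of $H(u^-)+\norm{u^-}_{E_L}^2$ plus a term supported where $|u^-|\le e^{-1}$ which is controlled by $\int_{D_L}(u^-)^2$; all of these are $\le C\delta^2$ by part (i). Hence
    $$ \tfrac{q_0}{2}\int_{D_L} u^2 \;\leq\; \half\int_{D_L}Q(x)(u^+)^2 \;\leq\; M + \tfrac12\delta\norm u_{E_L} + C\delta^2.
    $$
To close the loop I then need to bound $\norm u_{E_L}^2$ itself. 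I would do this by going back to $J_L(u)=\half\norm u_{E_L}^2 - \int_{D_L}Q(x)G(u)$, writing $\int Q G = \int Q(-H+F)$, noting $H\ge 0$ (so $-\int QH\le 0$ helps), and $\int_{D_L}Q(x)F(u)\le C_p\norm Q_\infty\int_{D_L}|u|^p$; this last term is \emph{not} directly controlled by the $E_L$-norm without Sobolev, so instead I would use the already-obtained $L^2$ bound together with the test-function estimate $J_L'(u)u\le\delta\norm u_{E_L}$: expanding $J_L'(u)u=\norm u_{E_L}^2-\int Q g(u)u$ and using $\int Q g(u)u = \int Q(u^+)^2\log(u^+)^2 + (\text{lower order on }\{u<0\})$, one bounds $\int_{D_L}Q(u^+)^2\log(u^+)^2$ from above by splitting into $\{u^+\le 1\}$ (where it is $\le 0$) and $\{u^+>1\}$ where $s^2\log s^2\le \varepsilon s^p + C_\varepsilon s^2$... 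I'll instead use the cleaner route: from $\norm u_{E_L}^2 = J_L'(u)u + \int Q g(u)u$ and $2J_L(u) = \norm u_{E_L}^2 - 2\int QG(u)$, subtract to isolate $\norm u_{E_L}^2$ in terms of $J_L(u)$, $J_L'(u)u$ and $\int_{D_L}Q(x)u^2$ via \eqref{eq:2.8}, getting $\norm u_{E_L}^2 = 2J_L(u) - J_L'(u)u + \int_{D_L}Q(x)u^2 + (\text{corrections on }\{u<0\}\text{ bounded by }C\delta^2)$, and now the right side is $\le 2M + \delta\norm u_{E_L} + C_4'(M,\delta)$ using the $L^2$ bound just proved. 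Absorbing $\delta\norm u_{E_L}$ by Young's inequality gives $\norm u_{E_L}\le C_4(M,\delta)$, and then $\int H(u)\le \int H(u^+)+\int H(u^-)$ with $\int H(u^+)\le \int h(u^+)u^+$ bounded via the same bookkeeping; similarly for $\int h(u)u$.

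Finally, (iii) is immediate: if $u$ is a critical point then $J_L'(u)=0$, in particular $u\ge 0$ (previous lemma), so $J_L(u)=J_L(u)-\half J_L'(u)u = \int_{D_L}Q(x)(\half g(u)u - G(u)) = \half\int_{D_L}Q(x)u^2$ by \eqref{eq:2.8}. The main obstacle is the middle part of (ii): making sure the bound on $\norm u_{E_L}$ is extracted \emph{linearly} from the mountain-pass level $M$ and $J_L'(u)u$ without invoking Sobolev embeddings with $L$-dependent constants — the identity \eqref{eq:2.8} is exactly what makes this possible, converting the superquadratic term into a clean $\half Q u^2$, and the only care needed is the $\{u<0\}$ corrections, all of which are already absorbed into $C_3\delta^2$ by part (i). Throughout, $L$-independence is guaranteed because every constant ($q_0$, $\norm Q_\infty$, $C_p$) comes from the $1$-periodicity of $V,Q$ and is therefore the same on every $D_L$.
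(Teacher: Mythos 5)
Parts (i) and (iii) are fine and coincide with the paper's argument: testing $J_L'(u)$ with $u^-$ gives (i), and (iii) follows from \eqref{eq:2.8} once one knows critical points are nonnegative. The genuine gap is in the middle of (ii), exactly at the point you flagged as "the main obstacle". The "cleaner route" identity you propose is algebraically vacuous: from $\norm u_{E_L}^2=J_L'(u)u+\int_{D_L}Qg(u)u$ and $2J_L(u)=\norm u_{E_L}^2-2\int_{D_L}QG(u)$, substituting $G(u)=\half g(u)u-\half u^2$ on $\{u\ge 0\}$ and subtracting makes the $\norm u_{E_L}^2$ terms cancel identically, and what survives is only $\int_{D_L}Q(x)u^2=2J_L(u)-J_L'(u)u+O(\delta^2)$ --- i.e.\ the $L^2$ bound you already had from the first half of (ii). There is no linear extraction of $\norm u_{E_L}^2$ from $M$, $J_L'(u)u$ and $\int Qu^2$ alone; the identity \eqref{eq:2.8} kills the superquadratic term only in the combination $J_L-\half J_L'(\cdot)\cdot$, which contains no $E_L$-norm. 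So as written your proposal never bounds $\norm u_{E_L}$, and consequently the bounds on $\int_{D_L}H(u)$ and $\int_{D_L}h(u)u$ ("same bookkeeping") are not established either.

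The route you abandoned is the one the paper actually takes, and your reason for abandoning it (fear of $L$-dependent Sobolev constants) is unfounded: since $V\ge\inf V>0$ by (A1)--(A2), $\norm\cdot_{E_L}$ dominates $\norm\cdot_{H^1(D_L)}$ uniformly in $L$, and the Gagliardo--Nirenberg inequality on the periodic boxes $D_L$ holds with a constant independent of $L$. Concretely: from $J_L(u)-\half J_L'(u)u\ge\half\int_{D_L}Q(u^+)^2$ one gets $\norm u_{L^2(D_L)}\le C_5(M,\delta)(1+\norm u_{E_L})^{1/2}$; then
$\int_{D_L}QF(u)\le C_p\norm Q_{L^\infty}\norm u_{L^p(D_L)}^p\le C_p'\norm u_{E_L}^{\theta p}\norm u_{L^2(D_L)}^{(1-\theta)p}$ with $\theta p=N(p-2)/2$; choosing $p$ close to $2$ so that $\theta p+(1-\theta)p/2<2$, the inequality $J_L(u)\le M$ reads
$\half\norm u_{E_L}^2+\int_{D_L}QH(u)\le M+(\hbox{subquadratic in }\norm u_{E_L})$, and absorption gives simultaneously the bounds on $\norm u_{E_L}$ and $\int_{D_L}H(u)$; finally $\int_{D_L}h(u)u\le 2\int_{D_L}H(u)+\int_{D_L}h(u^-)u^-$ by \eqref{eq:2.4} and part (i). Without some interpolation or Sobolev-type control of the superquadratic part $\int_{D_L}QF(u)$, the identity \eqref{eq:2.8} by itself cannot close the argument.
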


\begin{proof}
Suppose that $L\in\N$ and $u\in E_L$ satisfies \eqref{eq:2.9}.  We note that $g(u)u^-=h(u^-)u^-$.
It follows from $\abs{J_L'(u)u^-}\leq \delta \norm{u^-}_{E_L}$ that
    $$  \norm{u^-}_{E_L}^2 + \int_{D_L} Q(x)h(u^-)u^- \leq \delta \norm{u^-}_{E_L}.
    $$
We can easily get (i) from Lemma \ref{lem:2.1} (iii-a).

To show (ii), we note by \eqref{eq:2.4} and \eqref{eq:2.8} that
    \begin{eqnarray*}
    J_L(u) -\half J_L'(u)u &=& -\int_{D_L} Q(x)\bigl( G(u)-\half g(u)u \bigr) \\
    &=& \half\int_{D_L} Q(x)(u^+)^2 + \int_{D_L} Q(x) \bigl( H(u^-)-\half h(u^-)u^- \bigr) \\
    &\geq& \half\int_{D_L} Q(x)(u^+)^2.
    \end{eqnarray*}
It follows from \eqref{eq:2.9}--\eqref{eq:2.10} that
	$$	\half\int_{D_L} Q(x)(u^+)^2 \leq M +\frac{\delta} 2\norm u_{E_L}.
	$$
Thus, by (i),
    $$  \norm u_{L^2(D_L)} \leq C_5(M,\delta)(1+\norm u_{E_L})^{\half},
    $$
where $C_5(M,\delta)>0$ is independent of $L\in\N$.  By Lemma \ref{lem:2.1} and 
Gagliard-Nirenberg inequality, we have
    \begin{eqnarray*}
    \int_{D_L} Q(x) F(u) &\leq& C_p\norm Q_{L^\infty} \norm u_{L^p(D_L)}^p 
    \leq C_p'\norm u_{E_L}^{\theta p}\norm u_{L^2(D_L)}^{(1-\theta)p} \\
	&\leq& C_p'C_5(M,\delta)^{(1-\theta)p}\norm u_{E_L}^{\theta p}(1+\norm u_{E_L})^{(1-\theta)p/2},
    \end{eqnarray*}
where $\theta\in (0,1)$ satisfies
    $$  \frac 1 p =(\half-\frac 1 N)\theta+\half(1-\theta), \quad \text{that is,} \quad
        \theta p=\frac{N(p-2)}2.
    $$
Thus it follows from $J_L(u)\leq M$ that
    $$  \half\norm u_{E_L}^2 +\int_{D_L} Q(x)H(u) 
		- C_p'C_5(M,\delta)^{(1-\theta)p}\norm u_{E_L}^{\theta p}(1+\norm u_{E_L})^{(1-\theta)p/2}
		\leq M.
    $$
Choosing $p$ close to $2$ so that $\theta p+(1-\theta)p/2<2$, we can see that (ii) holds for a suitable constant
$C_4(M,\delta)>0$ independent of $L\in\N$.

\eqref{eq:2.11} follows from \eqref{eq:2.8}.

\end{proof}

%%%%%%%%%%%%%%%%%%

\subsection{Some notation}
At the end of this section, we give some notation which will be used repeatedly
in the following sections.  

We denote 
    $$  2^*=\begin{cases}
        \frac{2N}{N-2}, &\mbox{if}\ N\geq 3, \\
        \infty,  &\mbox{if}\ N=1,2.
        \end{cases}
    $$
We will use the following subsets of $\R^N$ frequently.
    \begin{eqnarray*}
    D_1(n) &=& \{x\in\R^N;x-n\in D_1\} \quad \text{for}\ n\in\Z^N, \\
    B_r(y) &=& \{x\in\R^N;\abs{x-y}<r\} \quad \text{for}\ r>0 \ \text{and}\ y\in\R^N.
    \end{eqnarray*}
We also denote
    \begin{eqnarray*}
    B^{(L)}_r(u) &=& \{v\in E_L;\norm{u-v}_{E_L}<r\}\quad  \text{for}\ u\in E_L,\ r>0,\\
    B^{(\infty)}_r(u) &=& \{v\in E_\infty;\norm{u-v}_{E_\infty}<r\} \quad \text{for}\
                    u\in E_\infty, \ r>0.
    \end{eqnarray*}
\noindent
For $c\in \R$ and $L\in\N$ we denote 
    \begin{eqnarray*}
    {[}J_L\leq c{]}_L &=& \{u\in E_L;\, J_L(u)\leq c\},\\
    %{[}c_1 \leq J_L \leq c_2{]}_L &=& \{u\in E_L;\, c_1\leq J_L(u)\leq c_2\}, \\
    {[}J_\infty\leq c{]}_\infty &=& \{u\in \D;\, J_\infty(u)\leq c\}.
    %{[}c_1 \leq J_\infty \leq c_2{]}_\infty&=&\{u\in \D;\, c_1\leq J_\infty(u)\leq c_2\}.
    \end{eqnarray*}
In a similar way, we use ${[}J_L\geq c{]}_L$, ${[}J_\infty\geq c{]}_\infty$ etc.\\
In what follows, for $u\in H_0^1(D_L)$, we denote its $2L$-periodic extension by $\Phi_L(u)\in E_L$.
We choose and fix a function $\psi(x)\in {C}_0^\infty(\mathbb{R}^N)$ such that
    \begin{eqnarray*}
        \psi(x)&=&\begin{cases}
        1\ &\mbox{for}\ |x|\leq \frac{1}{4},\\ 
        0\ &\mbox{for}\ | x|\geq \half,
        \end{cases}\\
        |{\nabla \psi(x)}| &\leq& 8 \ \hbox{for all}\ x\in\mathbb{R}^N.
    \end{eqnarray*}
For $s>0$, we set $\psi_s(x)=\psi({x\over s})$.
We note that for $0<R\leq L$, $u\in H_{loc}^1(\mathbb{R}^N)$, 
    $$  (\psi_Ru)(x) \equiv \psi_R(x)u(x) \in H^1_0(D_L).
    $$
Thus $\Phi_L({\psi_Ru})(x)$, i.e., $2L$-periodic extension of $\psi_Ru$, 
is an element of $E_L$.

%%%%%%%%%%%%%%%%%%

\setcounter{equation}{0}
\section{Concentration-compactness type result}\label{section:3}
In this section we give a concentration-compactness type result.
It will play an important role when we take a limit as $L\to\infty$.

To state our concentration-compactness type result, we need notation:
    $$  \dist_L(y,y') = \min_{n\in \Z^N} \abs{y-y'-2Ln}.
    $$
$\dist_L(y,y')$ is a distance in $\R^N/\sim_L$, where an equivalence relation
$\sim_L$ is given by
    $$  y\sim_L y' \quad \text{if and only if}\quad
        y-y'=2Ln \ \text{for some}\ n\in\Z^N.
    $$
Our concentration-compactness type result is the following

\begin{pro}\label{pro:3.1} 
Assume that $(L_j)_{j=1}^\infty\subset\N$ and $u_j\in E_{L_j}$ ($j=1,2,\cdots$) 
satisfy for some $c>0$
    \be\label{eq:3.1}
        L_j\to\infty, \quad J_{L_j}(u_{j})\to c>0, \quad 
        \norm{J_{L_j}^\prime(u_j)}_{(E_{L_j})^*}\to 0 \quad \mbox{as}\ j\rightarrow \infty. 
    \ee  
Then 
\begin{enumerate}[(i)]
\item $\norm{u_j}_{E_{L_j}}$, $\int_{D_{L_j}}h(u_j)u_j$, $\int_{D_{L_j}}H(u_j)$ are bounded;
\item There exists $m\in \N$, $(w^\ell)_{\ell=1}^m \subset \K_\infty\setminus\{0\}$ and 
subsequence $(j_k)_{k=1}^\infty$ and sequences $(y^\ell_{j_k})_{k=1}^\infty\subset\R^N$ with 
$y_{j_k}^\ell\in D_{L_{j_k}}$ $(\ell=1,2,\cdots,m)$ such that 
\begin{enumerate}[(a)]
    \item For $\ell\neq\ell'$,
        \be\label{eq:3.2} 
        \dist_{L_{j_k}}(y_{j_k}^\ell,y_{j_k}^{\ell'})\to\infty \quad \text{as}\ k\to\infty.
        \ee
    \item For any $R_{j_k}>0$ with $R_{j_k} \leq L_{j_k}$ and $R_{j_k}\to \infty$ as $k\to\infty$
        \begin{equation}\label{eq:3.3}\\
            \norm{u_{j_k}-\sum_{\ell=1}^m\Phi_L({\psi_{R_{j_k}}w^\ell})(x-y_{j_k}^\ell)}_{E_{L_{j_k}}}
            \to 0 \quad \text{as} \ k\to\infty.
        \end{equation}
    \item 
        \begin{equation}\label{eq:3.4}
        c=\sum_{\ell=1}^m J_\infty(w^\ell)
        \end{equation}
    \item
        \begin{equation}\label{eq:3.5}
        \int_{D_{L_{j_k}}}H(u_{j_k})\to\sum_{\ell=1}^m\int_{\R^N}H(w^\ell)
        \end{equation}
\end{enumerate}
\end{enumerate}
\end{pro}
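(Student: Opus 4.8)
The plan is to run the standard concentration–compactness / splitting scheme of Lions, adapted to the $2L$-periodic setting and to our logarithmic nonlinearity. First I would establish part (i). Boundedness of $\norm{u_j}_{E_{L_j}}$ together with $\int_{D_{L_j}}H(u_j)$ and $\int_{D_{L_j}}h(u_j)u_j$ follows from Lemma \ref{lem:2.7}: condition \eqref{eq:3.1} gives $\norm{J_{L_j}'(u_j)}_{(E_{L_j})^*}\le\delta$ for $j$ large and $J_{L_j}(u_j)\le M$ with $M=c+1$, so (ii) of that lemma yields a bound $C_4(M,\delta)$ independent of $j$. (The bound on $\int H(u_j^-)$ from (i) of Lemma \ref{lem:2.7} plus the identity $g(s)-\tfrac12 g'$... one uses \eqref{eq:2.8} and \eqref{eq:2.11} to see these are all finite and controlled.)

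Next I would set up the iteration for part (ii). Since $\norm{u_j}_{E_{L_j}}$ is bounded and $J_{L_j}(u_j)\to c>0$, the sequence does not vanish: if $\sup_{y\in\R^N}\int_{D_1(y)\cap D_{L_j}}|u_j|^2\to0$ then by the periodic version of Lions' lemma $\int_{D_{L_j}}Q(x)F(u_j)\to0$, and combined with $J_{L_j}'(u_j)u_j\to0$ and $\int_{D_{L_j}}h(u_j^-)u_j^-\le C_3\delta^2$ one forces $\norm{u_j}_{E_{L_j}}\to0$, contradicting $c>0$ via Remark \ref{remark:2.2} (in the $L$-version). Hence there are $y_j^1\in D_{L_j}$ and $\beta>0$ with $\int_{D_1(y_j^1)}|u_j|^2\ge\beta$ along a subsequence. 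Translating, $u_j(\cdot+y_j^1)\rightharpoonup w^1$ weakly in $H^1_{loc}$; the key point is that $w^1\in\K_\infty\setminus\{0\}$: testing $J_{L_j}'(u_j)$ against $\varphi\in C_0^\infty$ and passing to the limit (using that $L_j\to\infty$ so the periodicity constraint disappears locally, and using Lemma \ref{lem:2.2} for the convergence of the $h$-term and Lemma \ref{lem:2.1}(ii) for the $f$-term) shows $J_\infty'(w^1)\varphi=0$, and $w^1\ne0$ by the lower mass bound plus elliptic regularity. Then I form $u_j^{(1)}=u_j-\Phi_{L_j}(\psi_{R_j}w^1)(\cdot-y_j^1)$ for a slowly growing $R_j\to\infty$, $R_j\le L_j$, and using the Brezis–Lieb type splitting for $\norm\cdot_{E_{L_j}}^2$, for $\int Q\,F(\cdot)$, and for $\int H(\cdot)$ (this last one is where the sublinear logarithmic term must be handled carefully: one splits $H(u_j)=H(u_j^{(1)})+H(\text{bump})+o(1)$ using that $H$ is convex and $s^2|\log s^2|$ has no concentration on the tails), one checks that $u_j^{(1)}$ again satisfies \eqref{eq:3.1} with $c$ replaced by $c-J_\infty(w^1)$ and with $J_{L_j}'(u_j^{(1)})\to0$.

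One then iterates. The scheme terminates after finitely many steps: each extracted bump $w^\ell$ satisfies $J_\infty(w^\ell)\ge b_\infty>0$ — here I would use \eqref{eq:2.11} and the uniform lower bound $\norm{w^\ell}_{E_\infty}\ge\rho_\infty$ from Remark \ref{remark:2.2}, which gives a positive lower bound on $J_\infty(w^\ell)$ — while the residual energy decreases by $J_\infty(w^\ell)$ at each step and stays $\ge0$; so after $m\le c/b_\infty$ steps the residual sequence vanishes, hence converges to $0$ strongly in $E_{L_j}$ (vanishing $\Rightarrow$ the nonlinear terms go to $0$ $\Rightarrow$ norm goes to $0$). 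This gives \eqref{eq:3.3}, and \eqref{eq:3.4}, \eqref{eq:3.5} fall out of the energy/$H$-splitting identities accumulated along the way. The mutual divergence \eqref{eq:3.2} is automatic: if $\dist_{L_{j_k}}(y_{j_k}^\ell,y_{j_k}^{\ell'})$ stayed bounded for $\ell\ne\ell'$, the profile $w^{\ell'}$ would have been absorbed into the previous subtraction, contradicting $w^{\ell'}\ne0$.

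The main obstacle I anticipate is the handling of the logarithmic term throughout the splitting — specifically, proving the Brezis–Lieb type decomposition $\int_{D_{L_j}}H(u_j)=\int_{D_{L_j}}H(u_j^{(1)})+\int_{\R^N}H(w^1)+o(1)$ and the analogous statement for $\int h(u_j)u_j$, uniformly in $L_j$ and with the cut-off $\psi_{R_j}$ present. Because $H$ is only continuous (not $C^1$ near $0$ in a uniformly controlled way) and $s^2|\log s^2|$ is not a power, one cannot quote the usual Brezis–Lieb lemma directly; instead one splits the domain into the region where $|u_j|$ is small (where $0\le H(u_j)\le\tfrac12 u_j^2|\log u_j^2|$ is controlled by an $L^2$-type and an $L^{2^*}$-type bound and the contribution of the overlap is $o(1)$ because $R_j\to\infty$ and the bumps separate) and the region where $|u_j|\gtrsim e^{-1}$ (where $H$ grows linearly and the standard splitting applies). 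This is precisely the estimate that the paper flags in the introduction as the crux, so the bulk of the Appendix proof should be devoted to it; everything else is the routine concentration–compactness bookkeeping sketched above.
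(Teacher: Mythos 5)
Your overall scaffolding (part (i) from Lemma \ref{lem:2.7}, non-vanishing via the $L^q$-on-unit-cubes quantity and Lemma \ref{lem:3.3}, extraction of a profile $w^1\in\K_\infty\setminus\{0\}$ by local weak convergence and testing against $C_0^\infty$) matches the paper. But the core of your scheme — subtract the bump and claim that $u_j^{(1)}=u_j-\Phi_{L_j}(\psi_{R_j}w^1)(\cdot-y_j^1)$ \emph{again satisfies \eqref{eq:3.1}}, with level $c-J_\infty(w^1)$ and $\norm{J_{L_j}'(u_j^{(1)})}_{(E_{L_j})^*}\to 0$ — is asserted without proof, and this is exactly where the logarithmic term bites. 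For the derivative you would need, uniformly for $\norm{\varphi}_{E_{L_j}}\leq 1$, that $\int_{D_{L_j}}Q\,[\,h(u_j)-h(u_j^{(1)})-h(\Phi_{L_j}(\psi_{R_j}w^1)(\cdot-y_j^1))\,]\varphi\to 0$; but $h$ is not Lipschitz at $0$ ($h'(s)=-\log s^2-2\to\infty$), $\int h(u)\varphi$ is only controlled through $H$ (Lemma \ref{lem:2.2} (ii)), and on the ball $B_{R_j}(y_j^1)$ of \emph{growing} radius you only have $H^1_{loc}$ weak convergence, so the standard Nemytskii-continuity argument does not go through by routine means. You flag the Brezis--Lieb-type splitting of $\int H$ (needed for the level identity) as the main obstacle and sketch a small/large-$|u_j|$ domain decomposition, but you do not carry it out, and you do not address the derivative splitting at all. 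A secondary issue: your termination bound $m\leq c/b_\infty$ uses $b_\infty>0$, which in the paper is only established later (Theorem \ref{thm:3} (iii), Corollary \ref{corollary:4.1}) \emph{using} this proposition, so as written it risks circularity.

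The paper's proof is organized precisely to avoid re-verifying the Palais--Smale property for any residual. It never subtracts and restarts: all bumps are extracted from the original sequence $u_j$ (in the non-vanishing case one translates $u_j$, not the residual, so the limit is a critical point simply because $u_j$ is a PS sequence), and in the vanishing case it proves $\norm{u_j-\widetilde w_j}_{E_{L_j}}\to 0$ directly by expanding $\norm{u_j-\widetilde w_j}^2_{E_{L_j}}=J_{L_j}'(u_j)(u_j-\widetilde w_j)-(I)+(II)-(III)$; the only input needed on the logarithmic part is the \emph{one-sided} bound $\liminf_j\int Q\,h(u_j)(u_j-\widetilde w_j)\geq 0$, obtained from the Fenchel-type inequality \eqref{eq:2.5} together with the uniform bound on $\int H(u_j)$ and the smallness of $\int_{\R^N\setminus B_\rho}H(w^\ell)$ — no Brezis--Lieb identity for $H$ and no dual-norm estimate for differences of $h$ are ever required. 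Termination comes from the norm splitting (Step 2) and $m\rho_\infty^2\leq A$ (Remark \ref{remark:2.2}), and \eqref{eq:3.4}, \eqref{eq:3.5} are read off afterwards from \eqref{eq:2.8} and the identity $\int QH(u_j)=J_{L_j}(u_j)-\frac12\norm{u_j}^2_{E_{L_j}}+\int QF(u_j)$, plus a localization to pass from the $Q$-weighted to the unweighted statement. So your route could in principle be completed, but only if you actually prove the PS property of the residual for this non-Lipschitz nonlinearity — a substantial missing step that the paper's argument is specifically designed to bypass.
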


\noindent 
To show Proposition \ref{pro:3.1}, for a sequence $u_j\in E_{L_j}$ ($j=1,2,\cdots$) satisfying
\eqref{eq:3.1} we need to give estimates of $\int_{\R^N} H(u_j)$ as well as $\norm{u_j}_{E_{L_j}}$.

First we show

\begin{lem}\label{lem:3.2}
For any $q\in (2,2^*)$ there exists a constant $C_q>0$ independent of $L\in\N$ such that 
for $L\in\N$, $u\in E_L$, it holds that
    \begin{equation}\label{eq:3.6}
    \norm u_{L^q(D_L)}^q\leq C\left(\sup_{n\in\Z^N}\norm u_{L^q(D_1(n))}\right)^{q-2}\norm u_{E_L}^2.
    \end{equation}
\end{lem}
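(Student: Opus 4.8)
The plan is to prove the inequality \eqref{eq:3.6} by a standard covering argument combined with interpolation, exploiting the lattice structure underlying $E_L$. First I would partition $D_L = [-L,L]^N$ (up to a null set) into the $(2L)^N$ unit cubes $D_1(n)$, $n\in\Z^N$, $n_i\in\{-L,-L+1,\dots,L-1\}$; by $2L$-periodicity of $u\in E_L$ this is exactly the set of representatives modulo $2L\Z^N$, so that $\int_{D_L}|u|^q = \sum_n \int_{D_1(n)}|u|^q$ and similarly for the $H^1$-norm. On each fixed unit cube $D_1(n)$, the Gagliardo--Nirenberg--Sobolev inequality (applicable since $q\in(2,2^*)$) gives, with a constant $C=C(N,q)$ independent of $n$ and $L$,
    $$
    \norm{u}_{L^q(D_1(n))}^2 \leq C\left(\norm{\nabla u}_{L^2(D_1(n))}^2 + \norm{u}_{L^2(D_1(n))}^2\right),
    $$
and since $V(x)\geq\min_{D_1}V>0$ is periodic and bounded below, the right-hand side is comparable to $\norm{u}_{E_L(D_1(n))}^2 = \int_{D_1(n)}|\nabla u|^2+V(x)u^2$; absorbing the constant, $\norm{u}_{L^q(D_1(n))}^2 \leq C'\,\norm{u}_{E_L(D_1(n))}^2$.

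The key step is then to raise this to the power $q/2$ and sum over $n$ in the right way. I would write
    $$
    \norm{u}_{L^q(D_1(n))}^q = \norm{u}_{L^q(D_1(n))}^{q-2}\cdot\norm{u}_{L^q(D_1(n))}^2
      \leq \left(\sup_{m\in\Z^N}\norm{u}_{L^q(D_1(m))}\right)^{q-2} C'\,\norm{u}_{E_L(D_1(n))}^2,
    $$
and summing over the $(2L)^N$ representative cubes gives
    $$
    \norm{u}_{L^q(D_L)}^q = \sum_n \norm{u}_{L^q(D_1(n))}^q
      \leq C'\left(\sup_{m\in\Z^N}\norm{u}_{L^q(D_1(m))}\right)^{q-2} \sum_n \norm{u}_{E_L(D_1(n))}^2
      = C'\left(\sup_{m\in\Z^N}\norm{u}_{L^q(D_1(m))}\right)^{q-2}\norm{u}_{E_L}^2,
    $$
which is \eqref{eq:3.6} with $C=C'$. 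Note the supremum over $m\in\Z^N$ is really a finite supremum over the $(2L)^N$ representatives because $n\mapsto\norm{u}_{L^q(D_1(n))}$ is $2L$-periodic in $n$, so it is attained and finite; this makes the factoring step legitimate.

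The only point requiring a little care — and the main (mild) obstacle — is the uniformity of the Gagliardo--Nirenberg constant over the unit cubes $D_1(n)$ and the fact that $D_1(n)$ is a translate of the fixed cube $D_1=[-1,1]^N$, so translation-invariance of Lebesgue measure and of $|\nabla u|$ gives one and the same embedding constant for every $n$; together with the uniform two-sided bound $0<\min_{\R^N}V\leq V(x)\leq\max_{\R^N}V<\infty$ coming from (A1)--(A2), all constants are independent of $L$ and $n$. (If one prefers to avoid reproving GNS on a cube, one can invoke the Sobolev embedding $H^1(D_1)\hookrightarrow L^q(D_1)$ directly for $2\leq q\leq 2^*$ — with strict inequality $q<2^*$ when $N\geq 3$ — which is all that is used here.) Everything else is bookkeeping with the periodic decomposition.
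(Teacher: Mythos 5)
Your proof is correct and is essentially the same argument as the paper's: on each cube $D_1(n)$ write $\norm u_{L^q(D_1(n))}^q=\norm u_{L^q(D_1(n))}^{q-2}\norm u_{L^q(D_1(n))}^2$, bound the first factor by the supremum and the second by the (uniform, translation-invariant) Sobolev embedding against $\norm u_{E_L(D_1(n))}^2$, then sum over $n\in\{-L,\dots,L-1\}^N$. The only small bookkeeping slip is that with the paper's convention $D_1=[-1,1]^N$ the sets $D_1(n)$, $n\in\Z^N$, are overlapping cubes of side $2$ rather than a partition of $D_L$, so your two displayed equalities for the sums should be inequalities up to a dimensional overlap factor $2^N$, which is harmlessly absorbed into the constant $C$.
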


\begin{proof}
For $u\in E_L$, $n\in\Z^N$, $q\in(2,2^*)$, by Sobolev inequality
we have
    \begin{equation*}
    \begin{aligned}
    \norm u_{L^q(D_1(n))}^q&= \norm u_{L^q(D_1(n))}^{q-2}\norm u_{L^q(D_1(n))}^2\\
    &\leq C \left(\sup_{n\in\Z^N}\norm u_{L^q(D_1(n))}\right)^{q-2}
        \norm u_{E_L(D_1(n))}^2.
    \end{aligned}
    \end{equation*}
Summing up for $n\in \{-L,-L+1,\cdots,L-1\}^N$, we get \eqref{eq:3.6}. 
\end{proof}

\noindent
Next we show

\begin{lem}\label{lem:3.3}
Let $q\in (2,2^*)$ and suppose that $L_j$, $v_j$ ($j=1,2,\cdots$) satisfy $L_j\to\infty$, 
$v_j\in E_{L_j}$ and 
    $$  \norm{J_{L_j}'(v_j)}_{(E_{L_j})^*}\to 0 \quad \text{and}\quad
        \sup_{j\in\N} \norm{v_j}_{E_{L_j}} <\infty.
    $$
Moreover suppose that
    $$  \sup_{n\in\N} \norm{v_j}_{L^q(D_1(n))} \to 0.
    $$
Then
    $$  \norm{v_j}_{E_{L_j}}\to 0, \quad \int_{D_{L_j}} H(v_j)\to 0, \quad
        J_{L_j}(v_j)\to 0.
    $$
\end{lem}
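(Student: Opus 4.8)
The plan is to show this is essentially a Brezis--Lieb / vanishing lemma argument adapted to the $2L$-periodic setting. First I would use the hypothesis $\sup_n\norm{v_j}_{L^q(D_1(n))}\to 0$ together with Lemma \ref{lem:3.2}: since $\norm{v_j}_{E_{L_j}}$ is bounded, \eqref{eq:3.6} gives
$$\norm{v_j}_{L^q(D_{L_j})}^q\leq C\Bigl(\sup_{n\in\Z^N}\norm{v_j}_{L^q(D_1(n))}\Bigr)^{q-2}\norm{v_j}_{E_{L_j}}^2\to 0.$$
This is the crucial gain: we upgrade local smallness in $L^q$ to global smallness in $L^q(D_{L_j})$, with a constant uniform in $L$.

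Next I would test the near-critical equation against $v_j$ itself. From $\norm{J_{L_j}'(v_j)}_{(E_{L_j})^*}\to 0$ and boundedness of $\norm{v_j}_{E_{L_j}}$ we get $J_{L_j}'(v_j)v_j\to 0$, i.e.
$$\norm{v_j}_{E_{L_j}}^2+\int_{D_{L_j}}Q(x)h(v_j)v_j-\int_{D_{L_j}}Q(x)f(v_j)v_j\to 0.$$
By Lemma \ref{lem:2.1}(iii) the term $\int Q(x)h(v_j)v_j$ is nonnegative, while by Lemma \ref{lem:2.1}(ii-b), $f(v_j)v_j\leq p\,F(v_j)$-type bounds give $\int_{D_{L_j}}Q(x)f(v_j)v_j\leq C_p\norm Q_\infty\norm{v_j}_{L^q(D_{L_j})}^q$ once we fix $q=p\in(2,2^*)$. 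Hence
$$\norm{v_j}_{E_{L_j}}^2+\int_{D_{L_j}}Q(x)h(v_j)v_j\leq o(1)+C_p\norm Q_\infty\norm{v_j}_{L^q(D_{L_j})}^q\to 0,$$
which yields $\norm{v_j}_{E_{L_j}}\to 0$ and, since $h(v_j)v_j\geq 0$, also $\int_{D_{L_j}}h(v_j)v_j\to 0$. Then by \eqref{eq:2.4}, $\int_{D_{L_j}}H(v_j)\leq\int_{D_{L_j}}h(v_j)v_j\to 0$. Finally $J_{L_j}(v_j)=\half\norm{v_j}_{E_{L_j}}^2+\int_{D_{L_j}}Q(x)H(v_j)-\int_{D_{L_j}}Q(x)F(v_j)$, and all three pieces tend to $0$ (the last one bounded by $C_p\norm Q_\infty\norm{v_j}_{L^q(D_{L_j})}^q$), so $J_{L_j}(v_j)\to 0$.

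The one point needing a little care is the sign of the $h$-term and making the pairing $J_{L_j}'(v_j)v_j$ legitimate: on $E_{L_j}$ the functional is genuinely $C^1$ (stated in the excerpt) so this is fine, but one should note $g(v_j)v_j = -h(v_j)v_j+f(v_j)v_j$ and that $h(v_j)v_j\ge 0$ pointwise by positivity/monotonicity of $h$ on $[0,\infty)$ (and $h(s)s=-h(-s)(-s)\ge 0$ for $s<0$ as well), so dropping it preserves the inequality. I do not anticipate a real obstacle here; the only mild subtlety is simply choosing the exponent $q$ in the statement to coincide with a valid $p\in(2,2^*)$ for the estimates of Lemma \ref{lem:2.1}(ii-b), which is harmless since the lemma is quantified over all $q\in(2,2^*)$. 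Everything is uniform in $L$ because the Sobolev/Gagliardo--Nirenberg constants used come from $D_1(n)$-estimates summed up, exactly as in Lemma \ref{lem:3.2}.
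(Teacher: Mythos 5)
Your proposal is correct and follows essentially the same route as the paper: Lemma \ref{lem:3.2} upgrades the local $L^q$ vanishing to $\norm{v_j}_{L^q(D_{L_j})}\to 0$ (hence $\int_{D_{L_j}}Q(x)f(v_j)v_j\to 0$ by Lemma \ref{lem:2.1}(ii-b)), and then pairing $J_{L_j}'(v_j)$ with $v_j$ together with $0\le H(v_j)\le h(v_j)v_j$ from \eqref{eq:2.4} forces $\norm{v_j}_{E_{L_j}}^2+\int_{D_{L_j}}Q(x)H(v_j)\to 0$, from which $J_{L_j}(v_j)\to 0$ follows. The paper's proof is exactly this chain of inequalities (your extra remarks on the sign of $h(s)s$ and on $Q$ being bounded below by positivity and periodicity are fine and implicit in the paper).
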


\begin{proof}
By Lemma \ref{lem:3.2}, we have
    $$  \norm{v_j}_{L^q(D_{L_j})} 
        \leq C\left(\sup_{n\in\Z^N} \norm{v_j}_{L^q(D_1(n))}\right)^{q-2} \norm{v_j}_{E_{L_j}}^2
        \to 0,
    $$
which implies $\displaystyle\int_{D_{L_j}}Q(x)f(v_j)v_j\to 0$.  Thus
    \begin{eqnarray*}
    &&\norm{v_j}_{E_{L_j}}^2 +\int_{D_{L_j}} Q(x)H(v_j) 
    \leq \norm{v_j}_{E_{L_j}}^2 +\int_{D_{L_j}} Q(x)h(v_j)v_j \\
    &&= J_{E_{L_j}}'(v_j)v_j + \int_{D_{L_j}} Q(x)f(v_j)v_j \to 0.
    \end{eqnarray*}
Therefore we have the conclusion of Lemma \ref{lem:3.3}.
\end{proof}

\noindent
Proof of Proposition \ref{pro:3.1} is rather lengthy and we will give a proof in Appendix.

%%%%%%%%%%%%%%%%%%%%%

\setcounter{equation}{0}
\section{One bump solutions}\label{section:4}
As we observed in Lemma \ref{lem:2.6}, $J_L(u)$ has a mountain pass geometry uniformly in $L\in\N$.
We define mountain pass values for $J_L(u)$ by
    \begin{eqnarray*}
    b_L&=&\inf_{\gamma\in\Gamma_L}\max_{\tau\in[0,1]}J_L(\gamma(\tau)),  \\
    \Gamma_L&=&\{\gamma\in C([0,1],E_L); \gamma(0)=0, J_L(\gamma(1))<0)\}.
    \end{eqnarray*}
Our main result in this section is the following theorem.

\begin{thm} \label{thm:3} 
Assume (A1) and (A2).  Then we have
\begin{enumerate}[(i)]
\item For any $L\in\N$, $b_L$ is attained by a positive solution $u_L(x)\in E_L$ of \eqref{PL}.
\item $b_L$ is characterized as
    $$  b_L=\inf_{u\in {\cal N}_L} J_L(u),
    $$
where
    $$  {\cal N}_L = \{ u\in E_L\setminus\{0\};\, J_L'(u)u=0\}.
    $$
\item $b_L\to b_\infty$ as $L\to\infty$, where $b_\infty$ is defined in \eqref{eq:1.2}.
\end{enumerate}
\end{thm}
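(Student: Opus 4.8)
The plan is to treat the three statements essentially together, using the mountain pass theorem, the special identity \eqref{eq:2.8} and the concentration-compactness result Proposition \ref{pro:3.1}. For (i): by Lemma \ref{lem:2.6}, $J_L$ has the mountain pass geometry with constants $C_1, r_1$ \emph{independent} of $L$, so the usual Ambrosetti-Rabinowitz scheme applies in $E_L$ and yields a Palais-Smale sequence $(u_n)_n\subset E_L$ at level $b_L\in[C_1,C_2]$. To get compactness for fixed $L$, I would first use Lemma \ref{lem:2.7}(i)--(ii) with $\delta\to 0$ and $M=C_2$ to bound $\norm{u_n}_{E_L}$, $\int_{D_L}H(u_n)$ and $\int_{D_L}h(u_n)u_n$ uniformly; then, since $D_L$ is a fixed bounded domain, $E_L\hookrightarrow L^q(D_L)$ is compact for $q\in(2,2^*)$, and the sub-quadratic term $h$ can be handled by the uniform $\int_{D_L}h(u_n)u_n$ bound (so that $h(u_n)\to h(u)$ in a suitable sense, e.g.\ via \eqref{eq:2.5} and uniform integrability). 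This gives a critical point $u_L$ with $J_L(u_L)=b_L$; Lemma on nonnegativity of critical points and the strong maximum principle upgrade it to a positive solution.

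For (ii): one inclusion is the standard observation that every $\gamma\in\Gamma_L$ crosses $\mathcal N_L$ (using \eqref{eq:2.8}: on a ray $t\mapsto tu$, $t>0$, $J_L(tu)$ has at most one positive critical point and is positive for small $t$, negative for large $t$, by Lemma \ref{lem:2.1}(i)--(ii)), so $b_L\le\inf_{\mathcal N_L}J_L$; for the reverse, given $u\in\mathcal N_L$, the ray through $u$ is (after reparametrization) an admissible path, and $\max_t J_L(tu)=J_L(u)$ by the same monotonicity, hence $\inf_{\mathcal N_L}J_L\le b_L$. The nonnegativity arguments of Lemma \ref{lem:2.3}-type ensure we may restrict to the relevant sign.

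For (iii), the key is a two-sided comparison. The upper bound $\limsup_{L\to\infty}b_L\le b_\infty$ comes from a cut-off/periodization argument: given $u\in\mathcal N_\infty$ near-optimal for $b_\infty$, the functions $\Phi_L(\psi_R u)$ (with $R=R(L)\to\infty$) converge to $u$ in $E_\infty$ and, by Lemma \ref{lem:2.2}(iii) and the local nature of $H$, also $\int_{D_L}Q(x)H(\Phi_L(\psi_R u))\to\int_{\R^N}Q(x)H(u)$; projecting onto $\mathcal N_L$ and using (ii) gives $b_L\le J_L(\Phi_L(\psi_R u))+o(1)\to J_\infty(u)$, hence $\limsup b_L\le b_\infty$. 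The lower bound $\liminf_{L\to\infty}b_L\ge b_\infty$ is where I expect the main obstacle: take $L_j\to\infty$ realizing $\liminf b_L$, with $u_{L_j}$ the solutions from (i), so $J_{L_j}(u_{L_j})=b_{L_j}\to c$ and $J_{L_j}'(u_{L_j})=0$; apply Proposition \ref{pro:3.1} to extract bumps $w^1,\dots,w^m\in\K_\infty\setminus\{0\}$ with $c=\sum_{\ell=1}^m J_\infty(w^\ell)$. Since each $w^\ell$ is a nonzero critical point of $J_\infty$, Remark \ref{remark:2.2} forces $m\ge 1$, and $J_\infty(w^\ell)=\frac12\int_{\R^N}Q(x)(w^\ell)^2>0$ by \eqref{eq:2.11}-type identity; moreover each $w^\ell\in\mathcal N_\infty$, so $J_\infty(w^\ell)\ge b_\infty$. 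Therefore $c=\sum_{\ell=1}^m J_\infty(w^\ell)\ge m\,b_\infty\ge b_\infty$, giving $\liminf b_L\ge b_\infty$.

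The delicate points to be careful about are: (a) verifying that the Palais-Smale compactness for fixed $L$ genuinely goes through for the logarithmic nonlinearity — the super-quadratic part $f$ is handled by compact Sobolev embedding, but the sub-quadratic $h$ needs the uniform $\int_{D_L}h(u_n)u_n$ estimate of Lemma \ref{lem:2.7} plus a uniform-integrability argument to pass to the limit in $\int h(u_n)\varphi$; (b) in the upper-bound part of (iii), controlling $J_L$ of the \emph{projection} of $\Phi_L(\psi_R u)$ onto $\mathcal N_L$, which requires knowing the projection is close to $1$ times the function — this follows from the quantitative form of the monotonicity of $t\mapsto J_L(tu)$ together with $\Phi_L(\psi_R u)\to u$; (c) ensuring in Proposition \ref{pro:3.1} that $c>0$ is inherited (guaranteed since $b_{L_j}\ge C_1>0$), so the hypothesis \eqref{eq:3.1} is met.
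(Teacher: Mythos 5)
Your proposal follows essentially the same route as the paper: mountain pass plus the Palais--Smale condition on the bounded cell $D_L$ (via Lemma \ref{lem:2.7}) for (i), the ray-fibering/crossing argument based on \eqref{eq:2.8} and Lemma \ref{lem:2.1} for the Nehari characterization (ii), and for (iii) the truncated near-minimizer $\Phi_L(\psi_R u)$ for the upper bound together with Proposition \ref{pro:3.1} applied to the mountain-pass solutions $u_{L_j}$ for the lower bound, exactly as in Section \ref{section:4}. The only slip is a harmless label swap in (ii): the crossing of every path with $\mathcal{N}_L$ gives $b_L\geq\inf_{\mathcal{N}_L}J_L$, while the ray through $u\in\mathcal{N}_L$ gives $b_L\leq\inf_{\mathcal{N}_L}J_L$; since both ingredients appear in your argument, the stated equality still follows.
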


%%%%

\noindent
As a corollary to Theorem \ref{thm:3}, we have

\begin{corollary}[c.f. Theorem 1.2 of \cite{Sz}]\label{corollary:4.1}
Assume (A1) and (A2).  Let $b_\infty$ be a number defined in \eqref{eq:1.2}.
Then $b_\infty\in(0,\infty)$ and $b_\infty$ is achieved by a positive solution of \eqref{LS1}. 
\end{corollary}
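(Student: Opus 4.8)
The plan is to read off both assertions of the corollary from Theorem~\ref{thm:3} and the concentration--compactness result Proposition~\ref{pro:3.1}. First I would pin down $b_\infty\in(0,\infty)$. By Lemma~\ref{lem:2.6} the functional $J_L$ has, for every $L\in\N$, a mountain pass geometry with constants $C_1$, $C_2>0$ that are independent of $L$: choosing $r_1$ small enough that $J_L\ge 0$ on $\{\norm u_{E_L}\le r_1\}$, any $\gamma\in\Gamma_L$ runs from $0$ to a point with $J_L<0$ and hence crosses the sphere $\{\norm u_{E_L}=r_1\}$, so $\max_\tau J_L(\gamma(\tau))\ge C_1$; while the admissible path $t\mapsto t\,\Phi_L(u_0)$ yields $b_L\le C_2$. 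Thus $C_1\le b_L\le C_2$ for all $L$, and since $b_L\to b_\infty$ by Theorem~\ref{thm:3}(iii) we get $b_\infty\in[C_1,C_2]\subset(0,\infty)$.

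Next I would produce the minimizer. By Theorem~\ref{thm:3}(i), for each $L\in\N$ there is a positive solution $u_L\in E_L$ of \eqref{PL} with $J_L(u_L)=b_L$ and $J_L'(u_L)=0$. Fixing any sequence $L_j\to\infty$, the sequence $(u_{L_j})_j$ satisfies \eqref{eq:3.1} with $c=b_\infty>0$, so Proposition~\ref{pro:3.1} supplies $m\in\N$ and $w^1,\dots,w^m\in\K_\infty\setminus\{0\}$ with
$$
    b_\infty=\sum_{\ell=1}^m J_\infty(w^\ell).
$$
Each $w^\ell$ is a nontrivial critical point of $J_\infty$, so testing $J_\infty'(w^\ell)v=0$ with $v=w^\ell\in\D$ shows $w^\ell\in{\cal N}_\infty$, hence $J_\infty(w^\ell)\ge b_\infty$ by the definition \eqref{eq:1.2} of $b_\infty$. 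Summing gives $b_\infty\ge m\,b_\infty$, and as $b_\infty>0$ this forces $m=1$ and $J_\infty(w^1)=b_\infty$. Since $w^1\in\K_\infty\setminus\{0\}$ is, by Lemma~\ref{lem:2.3}, a positive solution of \eqref{LS1}, $b_\infty$ is achieved by a positive solution and the corollary follows.

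There is no genuinely hard step left: the analysis has been front-loaded into Theorem~\ref{thm:3} (existence, the Nehari-type characterization of $b_L$, and the limit $b_L\to b_\infty$) and into Proposition~\ref{pro:3.1} (the bump decomposition together with the energy additivity \eqref{eq:3.4}). The only place that needs a moment's care is the elementary chain $b_\infty=\sum_\ell J_\infty(w^\ell)\ge m\,b_\infty$ combined with $b_\infty>0$, which rules out $m\ge 2$ and isolates a single bump as the ground state; this relies on the fact that every nontrivial element of $\K_\infty$ lies on the Nehari-type set ${\cal N}_\infty$.
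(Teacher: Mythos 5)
Your proposal is correct and follows essentially the same route as the paper: uniform mountain pass bounds give $b_\infty\in[C_1,C_2]$, Proposition~\ref{pro:3.1} applied to the periodic critical points at level $b_{L_j}\to b_\infty$ gives $b_\infty=\sum_\ell J_\infty(w^\ell)$, and the Nehari bound $J_\infty(w^\ell)\ge b_\infty$ with $b_\infty>0$ forces $m=1$, positivity coming from Lemma~\ref{lem:2.3}. The only cosmetic difference is that the paper proves the corollary jointly with Theorem~\ref{thm:3}(iii) (establishing $\limsup_L b_L\le b_\infty$ via cut-offs of near-minimizers on ${\cal N}_\infty$), whereas you invoke (iii) as already proved, which is legitimate since the statement is a corollary to Theorem~\ref{thm:3}.
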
   

%%%%

In this section, we will first prove that $b_L$ is a critical value of $J_L$, and then use 
the concentration compactness (Proposition \ref{pro:3.1})
to get a nonzero critical point for $J_\infty$. Then we will prove $\displaystyle\lim_{L\to\infty}b_L=b_\infty$.
By Lemma \ref{lem:2.6}, we know for every $L\in \N$, $J_L$ has mountain-pass geometry.
To prove $b_L$ is a critical value of $J_L$, we only need to prove that $J_L$ satisfies the (PS) condition. 
The first statement (i) in Theorem \ref{thm:3} follows from mountain pass theorem using the following 
lemma.

\begin{lem}\label{lem:4.2}
For any $L\in\N$, $J_L(u)$ satisfies the Palais-Smale condition.  More precisely, if a sequence
$(u_k)_{k=1}^\infty\subset E_L$ satisfies
    \begin{equation}\label{eq:4.1}
    (J_L(u_k))_{k=1}^\infty\quad \mbox{is bounded and} \quad 
    \norm{J_L'(u_k)}_{E_L^*}\to 0,
    \end{equation}
then $(u_k)_{k=1}^\infty$ has a convergent subsequence in $E_L$.
\end{lem}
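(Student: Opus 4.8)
The plan is to show that a sequence $(u_k)_{k=1}^\infty \subset E_L$ satisfying \eqref{eq:4.1} is bounded in $E_L$ and then extract a strongly convergent subsequence by a standard compactness argument, using the compactness of the embedding $E_L \hookrightarrow L^q(D_L)$ for $q \in [2,2^*)$ (which holds because $E_L$ functions are $2L$-periodic, hence effectively defined on the bounded domain $D_L$). First I would apply Lemma \ref{lem:2.7}: since $\norm{J_L'(u_k)}_{E_L^*}\to 0$, for $k$ large we have $\norm{J_L'(u_k)}_{E_L^*}\leq \delta$ for any fixed $\delta>0$, and since $J_L(u_k)$ is bounded, say by $M$, part (ii) of that lemma gives $\norm{u_k}_{E_L}$, $\int_{D_L}H(u_k)$ and $\int_{D_L}h(u_k)u_k \leq C_4(M,\delta)$, uniformly in $k$. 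This disposes of boundedness for free.

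Next, up to a subsequence, $u_k \rightharpoonup u$ weakly in $E_L$ and $u_k \to u$ strongly in $L^q(D_L)$ for all $q\in[2,2^*)$ (compact Sobolev embedding on the bounded domain $D_L$), and $u_k \to u$ a.e. on $D_L$. To upgrade to strong convergence in $E_L$, I would test the relation $J_L'(u_k)(u_k - u) \to 0$ (which holds since $J_L'(u_k)\to 0$ in $E_L^*$ and $(u_k - u)$ is bounded) and expand:
    $$ \norm{u_k - u}_{E_L}^2 = J_L'(u_k)(u_k-u) + \inp{u,u_k-u}_{E_L} + \int_{D_L} Q(x)g(u_k)(u_k - u).$$
The middle term vanishes because $u_k \rightharpoonup u$. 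The main work is to show the last term tends to zero. Writing $g = -h + f$: for the $f$-part, the polynomial bound $\abs{f(s)}\leq C_p\abs s^{p-1}$ from Lemma \ref{lem:2.1}(ii-b) with $p\in(2,2^*)$ together with $L^p(D_L)$ convergence and the Hölder inequality gives $\int_{D_L} Q(x)f(u_k)(u_k-u)\to 0$. For the $h$-part, $h$ is bounded and continuous with $h(s)s$ sublinear-to-$s^2\abs{\log s^2}$ behavior; using $\abs{h(u_k)}\leq 2e^{-1}$ one reduces $\int_{D_L}Q(x)h(u_k)(u_k-u)$ to a quantity controlled by $\norm{u_k-u}_{L^1(D_L)}\to 0$ (recall $D_L$ has finite measure), so this term also vanishes. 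Hence $\norm{u_k-u}_{E_L}\to 0$.

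The step I expect to be the main obstacle is the careful handling of the $h$-term in $\int_{D_L}Q(x)g(u_k)(u_k-u)$: although $h$ is globally bounded so naive estimates go through, one must be slightly careful since $h$ is not Lipschitz near $0$ (it involves $\log$), so rather than a difference estimate $\abs{h(u_k)-h(u)}$ I would just bound $\abs{h(u_k)(u_k-u)}\leq 2e^{-1}\abs{u_k-u}$ pointwise and invoke $L^1(D_L)$ convergence, which is clean. An equally valid alternative, which I would mention as a remark, is to use the Brezis–Lieb-type splitting together with Lemma \ref{lem:2.7}(i) and the estimate \eqref{eq:2.5}; but the direct $J_L'(u_k)(u_k-u)\to 0$ argument above is the most economical. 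Once $u_k \to u$ strongly, $u$ is automatically a critical point of $J_L$ and the lemma is proved.
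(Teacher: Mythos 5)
Your argument is correct and is essentially the paper's proof: the paper likewise invokes Lemma \ref{lem:2.7} for boundedness, extracts a weakly convergent subsequence, and then only remarks that strong convergence follows ``in a standard way'' on the bounded domain $D_L$ --- precisely the compact-embedding and $J_L'(u_k)(u_k-u)\to 0$ computation you carry out, with the $f$-part handled by the growth bound of Lemma \ref{lem:2.1}(ii-b) and the $h$-part by the global bound $\abs{h}\leq 2e^{-1}$. Only note the harmless sign slip in your expansion: it should read $\norm{u_k-u}_{E_L}^2=J_L'(u_k)(u_k-u)-\inp{u,u_k-u}_{E_L}+\int_{D_L}Q(x)g(u_k)(u_k-u)$, and the middle term still vanishes by weak convergence.
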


\begin{proof}
Suppose that $(u_k)_{k=1}^\infty\subset E_L$ satisfies \eqref{eq:4.1}.  By Lemma \ref{lem:2.7},
we can deduce that $(u_k)_{k=1}^\infty$ is a bounded sequence in $E_L$.
Thus, extracting a subsequence if necessary,  we may assume that for some $u_0\in E_L$
     $$ u_k\rightharpoonup u_0\  \mbox{weakly in}\ E_L.
     $$
Since we are working in a bounded domain $D_L$ essentially, we can prove $u_k\to u_0$ 
strongly in $E_L$ in a standard way.
\end{proof}

\noindent
To show the second statement (ii) in Theorem \ref{thm:3}, we need the following

%%%%%%%%%%%%%

\begin{lem}\label{lem:4.3}
\begin{enumerate}[(i)]
\item For $L\in\N$ and for any $u\in E_L$ with $u^+\not=0$, we have

\begin{enumerate}[(a)] 
\item $\{ tu;\, t\geq 0\}\cap {\cal N}_L\not=\emptyset$ and there exists a unique $t_u>0$ such
that $t_uu\in{\cal N}_L$.
\item $J(tu)<J(t_uu)$ for all $t\in (0,\infty)\setminus\{ t_u\}$.
\end{enumerate}
\item For any $u\in {\cal N}_L$, we have
    $$  \sup_{t\in [0,\infty)} J_L(tu) \leq J_L(u).
    $$
In particular, there exists $\gamma_u(t)\in \Gamma_L$ such that $\max_{t\in [0,1]}J(\gamma_u(t)) \leq J_L(u)$.
\end{enumerate}
\end{lem}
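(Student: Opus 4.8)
The plan is to treat part (i) first, since part (ii) follows almost immediately once (i) is in hand. Fix $L\in\N$ and $u\in E_L$ with $u^+\neq 0$. The idea is to study the scalar function
    $$  \varphi(t) = J_L(tu) = \tfrac12 t^2\norm u_{E_L}^2 - \int_{D_L} Q(x)G(tu)
        \quad \text{for}\ t\geq 0.
    $$
Since $G(tu) = -H(tu) + F(tu)$ and, for $t$ large, $G(s) = \tfrac12 s^2\log s^2 - \tfrac12 s^2$ on the set where $tu \geq e^{-1}$, one computes $\varphi(t)\to -\infty$ as $t\to\infty$ (because $u^+\neq 0$ forces $\int_{D_L}Q(x)(tu^+)^2\log(tu^+)^2$ to dominate), while for $t$ small $\varphi(t) = \tfrac12 t^2\norm u_{E_L}^2 + \int_{D_L}Q(x)H(tu) - \int_{D_L}Q(x)F(tu) > 0$ using $H\geq 0$ and the bound $F(s)\leq C_p\abs s^p$ from Lemma \ref{lem:2.1}(ii-b) with $p>2$. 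Hence $\varphi$ attains a positive maximum at some $t_u>0$. Note that $\varphi'(t) = \tfrac1t J_L'(tu)(tu)$ for $t>0$, so the critical points of $\varphi$ on $(0,\infty)$ are exactly the $t$ with $tu\in{\cal N}_L$; in particular $t_uu\in{\cal N}_L$, giving the first half of (i-a).

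The main point — and the step I expect to be the real obstacle — is \emph{uniqueness} of $t_u$ together with the strict inequality in (i-b). For this I would show that the map $t\mapsto \varphi'(t)/t$, equivalently the function
    $$  t\mapsto \norm u_{E_L}^2 - \int_{D_L} Q(x)\frac{g(tu)u}{t}
        = \norm u_{E_L}^2 - \int_{D_L} Q(x)\frac{g(tu)}{tu}\,u^2,
    $$
is strictly decreasing on $(0,\infty)$ on the set where $u>0$, and for the part where $u<0$ one uses $g(s)s = h(s^-)s^- \cdot(\text{sign stuff})$, more precisely that $g(tu)u = -h(tu^-)u^- + f(tu^+)u^+$ and handles the two pieces separately. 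By Lemma \ref{lem:2.1}(i), $s\mapsto g(s)/s$ is strictly increasing on $(0,\infty)$, so $t\mapsto g(tu(x))/(tu(x))$ is strictly increasing wherever $u(x)>0$; combined with the behavior on $\{u<0\}$ (where $h(tu^-)u^-/t = h(tu^-)/(tu^-)\cdot(u^-)^2$ with $h(s)/s$ \emph{decreasing}, hence the contribution $+\int Q(x)h(tu^-)/(tu^-)(u^-)^2$ is increasing in $t$), the whole expression $\varphi'(t)/t$ is strictly decreasing in $t$ on $(0,\infty)$ because $u^+\neq 0$ ensures the $\{u>0\}$ part contributes nontrivially. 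A strictly decreasing function has at most one zero, so $t_u$ is unique; moreover $\varphi'(t)>0$ for $t<t_u$ and $\varphi'(t)<0$ for $t>t_u$, which gives $\varphi(t)<\varphi(t_u)$ for all $t\neq t_u$, i.e. (i-b).

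For part (ii): if $u\in{\cal N}_L$ then $t_u=1$ by the uniqueness just proved, so (i-b) gives $\sup_{t\geq0}J_L(tu) = J_L(t_uu) = J_L(u)$. To produce $\gamma_u\in\Gamma_L$, pick $T>1$ large enough that $J_L(Tu)<0$ (possible since $\varphi(t)\to-\infty$) and set $\gamma_u(\tau) = \tau T u$ for $\tau\in[0,1]$; then $\gamma_u(0)=0$, $J_L(\gamma_u(1))=J_L(Tu)<0$, so $\gamma_u\in\Gamma_L$, and $\max_{\tau\in[0,1]}J_L(\gamma_u(\tau)) = \max_{t\in[0,T]}J_L(tu) \leq \sup_{t\geq0}J_L(tu) = J_L(u)$, as claimed. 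The only genuinely delicate bookkeeping is the $\{u<0\}$ region in the monotonicity argument, where one must be careful that $g$ is not $C^1$ at $0$ and that the relevant ratio $h(s)/s$ is decreasing rather than increasing; but since $H\geq0$ and all the terms have definite signs this causes no real trouble.
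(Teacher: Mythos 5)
Your proposal follows essentially the same route as the paper: study $\phi(t)=J_L(tu)$, observe $\phi'(t)/t=\norm u_{E_L}^2-\int_{D_L}Q(x)\frac{g(tu)}{tu}u^2$ is strictly decreasing via the monotonicity of $g(s)/s$, check the signs of $\phi'$ near $0$ and at infinity, conclude uniqueness of $t_u$ and (i-b), and then take the straight segment $\tau\mapsto \tau Tu$ for (ii); this is exactly the paper's argument (with $M\gg1$ in place of your $T$). One bookkeeping point should be corrected: on $\{u<0\}$ the term entering $\phi'(t)/t$ is $+\int Q(x)\frac{h(tu^-)}{tu^-}(u^-)^2$, and since $h(s)/s$ is \emph{decreasing} on $(0,\infty)$ this term is \emph{decreasing} in $t$, not increasing as you wrote; as stated, an increasing contribution could not be absorbed merely because ``the $\{u>0\}$ part contributes nontrivially,'' since no quantitative domination is available. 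With the correct direction there is nothing to dominate: both regions make $\int_{D_L}Q(x)\frac{g(tu)}{t}u$ nondecreasing in $t$ (strictly increasing on $\{u>0\}$, which is where $u^+\neq0$ is used), which is precisely what Lemma \ref{lem:2.1}(i) encodes, since $g(s)/s$ is strictly decreasing on $(-\infty,0)$ as well; so no separate treatment of the negative part is actually needed.
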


\begin{proof}
For $u\in E_L$ with $u^+\not=0$, we set $\phi(t)=J_L(tu)$ for $t\in [0,\infty)$.  Then 
    \begin{eqnarray*}
    \phi'(t) &=& t\norm u_{E_L}^2 -\int_{D_L} Q(x)g(tu)u \\
    &=& t\left( \norm u_{E_L}^2 -\int_{D_L} Q(x)\frac{g(tu)} t u\right).
    \end{eqnarray*}
Thus by Lemma \ref{lem:2.1}, $\frac 1 t\phi'(t)$ is a strictly decreasing function of $t\in (0,\infty)$.
Since $u^+\not=0$, we can also easily see that $\phi'(t)<0$ for large $t>0$ and $\phi'(t)>0$ for small $t>0$.
Thus there exists a unique $t_u\in (0,\infty)$ such that $\phi'(t_u)=0$ and
    \begin{eqnarray*}
    \phi'(t) &>& 0 \qquad \text{for}\ t\in (0,t_u),\\
    \phi'(t) &<& 0 \qquad \text{for}\ t\in (t_u,\infty).
    \end{eqnarray*}
Noting $\{ tu;\, t>0\}\cap {\cal N}_L =\{ tu;\, \phi'(t)=0\}$, we have (i).  

Noting also $u^+\not=0$ for all $u\in {\cal N}_L$ and setting $\gamma_u(t)=tMu$ for a large constant $M\gg 1$, 
(ii) follows from (i).  
\end{proof}

\noindent
In a similar way to Lemma \ref{lem:4.3}, we have

\begin{lem}\label{lem:4.4}
For any $u\in {\cal N}_\infty$, 
    $$  \sup_{t\in [0,\infty)} J_\infty(tu) \leq J_\infty(u).
    $$
\end{lem}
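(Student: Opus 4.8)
The plan is to follow the strategy of Lemma~\ref{lem:4.3}(ii), reducing the assertion to a scalar inequality along the ray $t\mapsto tu$, while staying at the level of (finite) integrals since $J_\infty$ is not of class $C^1$ on $E_\infty$. Fix $u\in{\cal N}_\infty$; then $u\in\D\setminus\{0\}$, hence $tu\in\D$ and $J_\infty(tu)\in\R$ for every $t\ge0$, and by Definition~\ref{defn:2.1} and Lemma~\ref{lem:2.2}(ii) the quantity $J_\infty'(u)u=\norm u_{E_\infty}^2-\int_{\R^N}Q(x)g(u)u$ is well defined and equals $0$.

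First I would establish the pointwise inequality: for every $s\in\R$ the $C^1$ function
\[
    \Psi_s(t):=\frac{t^2-1}{2}\,g(s)s-G(ts)+G(s),\qquad t\in[0,\infty),
\]
satisfies $\Psi_s(t)\le\Psi_s(1)=0$. Indeed $\Psi_s'(t)=t\,g(s)s-g(ts)s=t\,s^2\bigl(\tfrac{g(s)}{s}-\tfrac{g(ts)}{ts}\bigr)$ for $s\neq0$, and $\Psi_0\equiv0$; by Lemma~\ref{lem:2.1}(i), $\sigma\mapsto g(\sigma)/\sigma$ is strictly increasing on $(0,\infty)$ and strictly decreasing on $(-\infty,0)$, so a short case distinction on $\operatorname{sign}(s)$ and on whether $t<1$ or $t>1$ shows $\Psi_s'>0$ on $(0,1)$ and $\Psi_s'<0$ on $(1,\infty)$; hence $t=1$ is the global maximum of $\Psi_s$.

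Then, using $J_\infty'(u)u=0$ and the fact that $\int_{\R^N}Q(x)G(tu)$, $\int_{\R^N}Q(x)G(u)$ and $\int_{\R^N}Q(x)g(u)u$ are each finite (by Lemma~\ref{lem:2.2}, Lemma~\ref{lem:2.1}(ii)(b) and \eqref{eq:2.4} together with the oddness of $h$), I would write
\[
    J_\infty(tu)-J_\infty(u)=\Bigl(J_\infty(tu)-J_\infty(u)\Bigr)-\frac{t^2-1}{2}J_\infty'(u)u=\int_{\R^N}Q(x)\,\Psi_{u(x)}(t)\,dx\le0,
\]
the last inequality holding because $Q>0$ and $\Psi_{u(x)}(t)\le0$ a.e. Taking the supremum over $t\in[0,\infty)$ gives $\sup_{t\ge0}J_\infty(tu)\le J_\infty(u)$ --- in fact equality, attained at $t=1$.

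The only delicate point is bookkeeping rather than computation: since $J_\infty$ is not $C^1$ one cannot differentiate $t\mapsto J_\infty(tu)$ as was done for $J_L$ in Lemma~\ref{lem:4.3}, so the argument must be run through the finite integrals and the pointwise bound $\Psi_{u(x)}(t)\le0$. Alternatively, to mirror Lemma~\ref{lem:4.3} literally, one first checks that for fixed $u\in\D$ the scalar map $t\mapsto J_\infty(tu)$ is $C^1$ on $(0,\infty)$ --- which follows by dominated convergence, using the monotonicity and concavity of $h$ (Lemma~\ref{lem:2.1}(iii)) together with $\int_{\R^N}u^2\abs{\log u^2}<\infty$ to dominate $h(tu)u$ on compact $t$-intervals --- and then notes that $u\in{\cal N}_\infty$ forces $u^+\neq0$ (otherwise $\int_{\R^N}Q(x)g(u)u\le0<\norm u_{E_\infty}^2$) and that $\tfrac1t\tfrac{d}{dt}J_\infty(tu)$ is strictly decreasing, so $t=1$ is the unique, hence global, maximum point.
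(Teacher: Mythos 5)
Your proposal is correct, and it rests on exactly the same key ingredient as the paper, namely the strict monotonicity of $s\mapsto g(s)/s$ from Lemma \ref{lem:2.1}(i), which forces the fiber map to peak at $t=1$ when $J_\infty'(u)u=0$. The paper itself gives no separate argument for Lemma \ref{lem:4.4}; it simply invokes ``in a similar way to Lemma \ref{lem:4.3}'', i.e.\ differentiation of $\phi(t)=J(tu)$ along the ray. Where you differ is in the execution: instead of differentiating $t\mapsto J_\infty(tu)$ (which on $\D$ requires a justification that the paper leaves implicit), you encode the whole comparison in the scalar function $\Psi_s(t)=\tfrac{t^2-1}{2}g(s)s-G(ts)+G(s)$, prove $\Psi_s\le\Psi_s(1)=0$ pointwise by the same monotonicity, and then integrate against $Q>0$, using $J_\infty'(u)u=0$ and the finiteness of $\int Q\,G(tu)$, $\int Q\,G(u)$, $\int Q\,g(u)u$ for $u\in\D$ to legitimize the algebra. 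This buys a proof that never touches differentiability of the fiber map in the non-smooth setting, while your second remark shows how to recover the paper's literal route (dominated convergence for $\tfrac{d}{dt}J_\infty(tu)$ plus $u^+\neq0$) if one prefers; both are sound, and your bookkeeping of which integrals are finite is exactly the point the paper glosses over.
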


\begin{proof}[Proof of (ii) of Theorem \ref{thm:3}]
By lemma \ref{lem:4.3}, for any $u\in {\cal N}_L$ there exists a path $\gamma_u\in \Gamma_L$ such that 
$\max_{t\in [0,1]} J_L(\gamma_u(t)) \leq J_L(u)$.  Thus we have
    $$  b_L \leq \max_{t\in [0,1]} J_L(\gamma_u(t)) \leq J_L(u).
    $$
Since $u\in {\cal N}_L$ is arbitrary, we have
    \begin{equation}\label{eq:4.2}
        b_L \leq \inf_{u\in {\cal N}_L} J_L(u).
    \end{equation}
On the other hand, since $J_L(u^+) -\half J_L'(u^+)u^+ = \half\int_{D_L}Q(x)(u^+)^2\geq 0$ 
by \eqref{eq:2.8},  we have 
for any $\gamma\in\Gamma_L$
    $$  \half J_L'(\gamma(1)^+)\gamma(1)^+ \leq J_L(\gamma(1)^+) \leq J_L(\gamma(1))<0.
    $$
Thus there exists $t_0\in (0,1)$ such that $J_L'(\gamma(t_0)^+)\gamma(t_0)^+=0$, 
that is, $\gamma(t_0)^+\in {\cal N}_L$.  Thus, 
    $$  \inf_{v\in{\cal N}_L} J_L(v) \leq  J_L(\gamma(t_0)^+) \leq \max_{t\in [0,1]} J_L(\gamma(t)^+) 
        \leq \max_{t\in [0,1]} J_L(\gamma(t)).
    $$
Since $\gamma\in\Gamma_L$ is arbitrary, we have $b_L\geq \inf_{v\in{\cal N}_L} J_L(v)$.  Together with
\eqref{eq:4.2}, we have (ii) of Theorem \ref{thm:3}.
\end{proof}

\noindent
Finally we show (iii) of Theorem \ref{thm:3} and Corollary \ref{corollary:4.1}.

\begin{proof}[Proof of (iii) of Theorem \ref{thm:3} and Corollary \ref{corollary:4.1}]
First we remark that there are constants $C_1$, $C_2>0$ such that
    $$  b_L\in [C_1,C_2] \quad \text{for all}\ L\in \N,
    $$
which follows from Lemma \ref{lem:2.6}.

For any $\varepsilon>0$, we can find $u_0\in\mathcal{N}_\infty$ such that $J_\infty(u_0)<b_\infty+\varepsilon$. 
By Lemma \ref{lem:4.4}, we have
    $$   \max_{t\in[0,\infty)}J_\infty(tu_0)
        =J_\infty(u_0)<b_\infty+\varepsilon.
    $$
Considering  $\psi_Lu_0\in H^1_0(D_L)$ for $L\in\N$, we have
    $$  \max_{t\in [0,\infty)}\int_{D_L} \frac {t^2} 2\abs{\nabla (\psi_Lu_0)}^2 -Q(x)G(t\psi_Lu_0)
        \to \max_{t\in [0,\infty)} J_\infty(tu_0) \quad \text{as}\ L\to\infty.
    $$
That is,
    $$  \max_{t\in [0,\infty)} J_L(t\Phi_L({\psi_Lu_0})) \to \max_{t\in[0,\infty)}J_\infty(tu_0)
        <b_\infty+\varepsilon  \quad \text{as}\ L\to\infty,
    $$
from which we have
    $$  \limsup_{L\to\infty} b_L \leq \limsup_{L\to\infty} \max_{t\in [0,\infty)}J_L(t\Phi_L({\psi_Lu_0})) 
        \leq b_\infty+\varepsilon.
    $$
Thus we have $\limsup_{L\to\infty} b_L \leq b_\infty$.

On the other hand, we assume that $b_{L_k}\to c\equiv \liminf_{L\to\infty} b_L\in [C_1,b_\infty]$ for some
subsequence $L_k\to \infty$.  By Proposition \ref{pro:3.1}, we have
    $$  c=\sum_{\ell=1}^m J_\infty(w^\ell)
    $$
for some $m\geq 1$ and $w^\ell\in \K^\infty\setminus\{ 0\}\subset {\cal N}_\infty$.  Thus we have
$c\geq b_\infty=\inf_{u\in{\cal N}_\infty} J_\infty(u)$.   Therefore we have $b_L\to b_\infty$ as
$L\to \infty$ and there exists a $w\in \K_\infty\setminus\{ 0\}\subset {\cal N}_\infty$ such that
    $$  b_\infty = J_\infty(w).
    $$
This completes the proof of (iii) of Theorem \ref{thm:3} and Corollary \ref{corollary:4.1}.
\end{proof}

%%%%%%%%%%%%%%%%%%%%%%%%%%%%%%%%%%

\setcounter{equation}{0}
\section{Multi-bump solutions}\label{section:5}
In this section, we assume that there exists an $\alpha\in(0,\frac{1}{10}b_\infty)$ such that
    \begin{equation} \tag{\textasteriskcentered}
    (\K_\infty\cap[J_\infty\leq b_\infty+\alpha]_\infty)/\Z^N\ \mbox{is finite}.
    \end{equation}
Under the assumption \eqref{*}, choosing $\alpha>0$ smaller if necessary, we may also assume that
    $$  \K_\infty\cap[J_\infty=b_\infty]_\infty = \K_\infty\cap[0<J_\infty\leq b_\infty+\alpha]_\infty.
    $$
We choose and fix 
    $$  \omega,\, \omega'\in \F_\infty,
    $$
arbitrary and for large $R\gg 1$ we choose $P\in\Z^N$ and $L\in\N$ such that
    \be\label{sharp}\tag{$\sharp$}
    5R \leq \abs P \quad \text{and}\quad 2\abs P \leq L.
    \ee
We try to find a critical point in a neighborhood of 
    $$  \wwP = \Phi_L({\psi_R \omega})(\cdot)+\Phi_L({\psi_R \omega'})(\cdot-P).
    $$
In what follows, we always assume that $R$, $P$, $L$ satisfy \eqref{sharp}.

Later for a fixed $P\in\Z^N$ with $\abs P > 5R$, we take a limit as $L\to\infty$ to obtain our main
Theorem \ref{thm:2}.

%%%%%

\subsection{A gradient estimate and deformation argument}
To show the existence of a critical point in a neighborhood of $\wwP$, estimates of $J_L'(u)$ in 
annular neighborhoods of $\wwP$ are important.
We need the following notation to state our estimates.  

We note that under \eqref{sharp}
    $$  B_{2R}(0), \, B_{2R}(P) \subset D_L, \quad B_{2R}(0)\cap B_{2R}(P)=\emptyset.
    $$
For $R\gg 1$, $P\in\Z^N$, $L\in\N$ satisfying \eqref{sharp}, we set for $t\in[\frac R 2,2R]$,
    $$  \A{t} = D_L\setminus(B_t(0)\cup B_t(P)).
    $$
We also use notation for $t\in[\frac R 2,2R]$ and $u\in E_L$
    $$  \norm u_{E_L(\A{t})}^2 = \int_{\A{t}} \abs{\nabla u}^2 +V(x)u^2
    $$
and we define $J_{\A{t}}(u):\, E_L\to\R$ by
    $$  J_{\A{t}}(u)=\half\norm{u}_{E_L(\A{t})}^2 -\int_{\A{t}}Q(x)G(u).
    $$
We also denote for $c\in\R$
    $$  [J_{\A{t}}\leq c]_L = \{ u\in E_L;\, J_{\A{t}}(u) \leq c\} \ \text{etc.}
    $$
First we need the following lemma.

\begin{lem}\label{lem:5.1}
There exists $r_0>0$ and $R_0\geq 1$ with the following properties:
\begin{enumerate}[(I)]
\item For any $L\in \N$
    $$  J_L(u),\, J_L'(u)u \geq \frac 1 4\norm u_{E_L}^2 \quad
        \text{for all}\ \norm u_{E_L} \leq 2r_0.
    $$
In particular,
    $$  \norm u_{E_L} \geq 2r_0 \quad \text{for all}\ u\in\K_L\setminus\{ 0\}.
    $$
\item $\norm u_{E_\infty} \geq 2r_0$ for all $u\in \K_\infty\setminus\{ 0\}$;
\item For any $R$, $P$, $L$ with $R\geq 1$ and \eqref{sharp},
    $$  J_{\A{R}}(u):\, \{ u\in E_L;\, \norm u_{E_L(\A{R})} \leq r_0\}\to \R
    $$ 
is strictly convex and it holds that for $t\in [\frac R 2,2R]$
    \begin{eqnarray}
    &&J_{\A{t}}(u) \geq \frac 1 4\norm u_{E_L(\A{t})}^2,   \nonumber\\
    &&\int_{\A{t}} Q(x)f(u)u,\  \int_{\A{t}} Q(x)F(u)\leq \frac 1 8 \norm u_{E_L(\A{t})}^2 \label{eq:5.1}
    \end{eqnarray}
for all $u\in E_L$ with $\norm u_{E_L(\A{t})}\leq 2r_0$. 
\item For $R$, $P$, $L$ with $R\geq R_0$ and \eqref{sharp}, we have
    \be\label{eq:5.2}
	\norm{\psi_Rv-(\psi_Rv')(\cdot-n)}_{E_\infty},\ \norm{\psi_Rv-(\psi_Rv)(\cdot-n')}_{E_\infty} > 4r_0
    \ee
for all $v$, $v'\in \F_\infty$ with $v\neq v'$ and $n\in\Z^N$, $n'\in\Z^N\setminus \{0\}$.
\item For $R$, $P$, $L$ with $R\geq R_0$ and \eqref{sharp} and for $u\in B_{r_0}^{(L)}(\wwP)$
    $$  \half\int_{D_L} Q(x)u^2 \in (2b_\infty-\alpha,2b_\infty+\alpha).
    $$
\item For any $x_0\in \R^N$, if $u(x)\in H^1(B_2(x_0))$ satisfies
	\begin{align*}
	-&\Delta u+V(x)u=Q(x) g(u) \quad \text{in}\ B_2(x_0),\\
	&\norm u_{H^1(B_2(x_0))} \leq r_0.
	\end{align*}
Then $u$ satisfies
	$$	\norm u_{L^\infty(B_1(x_0))} \leq \frac 1 e.
	$$
\end{enumerate}
\end{lem}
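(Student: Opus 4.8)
The plan is to derive (I)--(III) from the subcritical growth of $f$ together with Sobolev's inequality, (IV)--(V) from the convergence $\psi_R v\to v$ in $E_\infty$ for the fixed finite family $\F_\infty$, and (VI) from a standard small-energy elliptic regularity estimate. The constants will be fixed in this order: first $r_0$, small enough for (I)--(III) and (VI) and for the lower bounds that (IV)--(V) will require; then $R_0$, large enough for (IV)--(V).

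For (I), write $J_L'(u)u=\norm u_{E_L}^2+\int_{D_L}Q(x)h(u)u-\int_{D_L}Q(x)f(u)u$. Since $h(u)u\ge 0$ and, by Lemma~\ref{lem:2.1}(ii-b) and Sobolev, $\int_{D_L}Q(x)f(u)u\le C\norm u_{E_L}^p$ for a fixed $p\in(2,2^*)$, and since $H(u)\ge 0$ while $F(u)\le C_p\abs u^p$, both $J_L'(u)u$ and $J_L(u)$ are at least $(\tfrac12-C(2r_0)^{p-2})\norm u_{E_L}^2\ge\tfrac14\norm u_{E_L}^2$ once $r_0$ is small; the bound on $\K_L\setminus\{0\}$ then follows since $J_L'(u)u=0$ there. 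Statement (II) is the same computation in $E_\infty$, i.e.\ Remark~\ref{remark:2.2} with a possibly smaller $\rho$. For (III), the inequalities \eqref{eq:5.1} are proved exactly as in (I) with $D_L$ replaced by $\A{t}$, using a Sobolev embedding $E_L(\A{t})\hookrightarrow L^p(\A{t})$ whose constant is uniform in $t\in[\tfrac R2,2R]$, $P$ and $L$; this is where \eqref{sharp} is used, through an extension of functions across the spheres $\partial B_t$ whose norm is controlled after rescaling. For the strict convexity on $\{\norm u_{E_L(\A{R})}\le r_0\}$ I would split $J_{\A{R}}(u)=\bigl(\tfrac12\norm u_{E_L(\A{R})}^2+\int_{\A{R}}Q(x)H(u)\bigr)-\int_{\A{R}}Q(x)F(u)$: the bracket is convex (the quadratic part strongly convex along segments, with gain $\tfrac12\theta(1-\theta)\norm{u_0-u_1}_{E_L(\A{R})}^2$, and $u\mapsto\int_{\A{R}}Q(x)H(u)$ convex since $H$ is), while along any segment in the ball the concavity defect of $-\int_{\A{R}}Q(x)F(u)$ is, by the bound $f'(s)\le C_p\abs s^{p-2}$ (valid for $p$ near $2$), H\"older with exponents $(\tfrac N2,2^*)$ and the uniform Sobolev estimate, at most $C(2r_0)^{p-2}\norm{u_0-u_1}_{E_L(\A{R})}^2$, which for $r_0$ small is strictly smaller than the convexity gain.

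For (IV) and (V) the key observation is that $\norm{\psi_R v-v}_{E_\infty}\to 0$ as $R\to\infty$, uniformly over $v\in\F_\infty$, so there is $R_0$ with $\norm{\psi_R v-v}_{E_\infty}<r_0$ whenever $R\ge R_0$ and $v\in\F_\infty$. For (IV), let $\mu_0$ be the minimum of $\norm{v-v'(\cdot-n)}_{E_\infty}$ over distinct $v,v'\in\F_\infty$ and $n\in\Z^N$, together with the minimum of $\norm{v-v(\cdot-n')}_{E_\infty}$ over $v\in\F_\infty$ and $n'\in\Z^N\setminus\{0\}$; then $\mu_0>0$, because as $\abs n\to\infty$ one has $\norm{v-v'(\cdot-n)}_{E_\infty}^2\to\norm v_{E_\infty}^2+\norm{v'}_{E_\infty}^2>0$, so only finitely many translates matter, and each of those is positive since $v\ne v'(\cdot-n)$ by the choice of $\F_\infty$, while $v\ne v(\cdot-n')$ for $n'\ne 0$ because a nonzero $H^1(\R^N)$ function cannot be periodic. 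Shrinking $r_0\le\mu_0/8$ and applying the triangle inequality (using that the $E_\infty$-norm is invariant under $\Z^N$-translations, as $V$ is periodic) yields \eqref{eq:5.2}. For (V), under \eqref{sharp} the two bumps of $\wwP$ have disjoint supports inside $D_L$, so $\tfrac12\int_{D_L}Q(x)\wwP^2=\tfrac12\int_{\R^N}Q(x)(\psi_R\omega)^2+\tfrac12\int_{\R^N}Q(x)(\psi_R\omega')^2$ (using $\Z^N$-periodicity of $Q$ and $P\in\Z^N$), which tends to $\tfrac12\int_{\R^N}Q\omega^2+\tfrac12\int_{\R^N}Q(\omega')^2=2b_\infty$ as $R\to\infty$; here the last equality is the $J_\infty$-analogue of \eqref{eq:2.11} (a consequence of \eqref{eq:2.8}) together with $\omega,\omega'\in\F_\infty\subset\K_\infty\cap[J_\infty=b_\infty]_\infty$. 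Since $\bigl|\tfrac12\int_{D_L}Q(x)u^2-\tfrac12\int_{D_L}Q(x)\wwP^2\bigr|=O(r_0)$ for $u\in B^{(L)}_{r_0}(\wwP)$ (Cauchy--Schwarz, the $L^2$-norms being bounded), enlarging $R_0$ and shrinking $r_0$ so each error is $<\alpha/2$ gives the stated inclusion.

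Finally, (VI) is a small-energy elliptic regularity bound. Rewriting the equation as $-\Delta u+u=(1-V(x))u+Q(x)g(u)$ on $B_{3/2}(x_0)$ and using $\abs{g(s)}\le C_\delta(\abs s+\abs s^{1+\delta})$ for a suitably small $\delta>0$, a Brezis--Kato/Moser iteration followed by $W^{2,q}$-estimates and Sobolev embedding gives $u\in L^\infty(B_1(x_0))$ with $\norm u_{L^\infty(B_1(x_0))}\le\Theta(\norm u_{H^1(B_2(x_0))})$ for some modulus $\Theta$ with $\Theta(0)=0$, independent of $x_0$ by translation invariance and the bounds on $V,Q$; it then suffices to take $r_0$ with $\Theta(r_0)\le 1/e$. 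The point requiring the most care is the uniformity of the constants: the Sobolev/extension constant for the family $\{\A{t}\}$ in (III) and the quantitative, data-dependent form of the regularity modulus $\Theta$ in (VI); the rest is bookkeeping, provided one respects the order in which $r_0$ and $R_0$ are fixed.
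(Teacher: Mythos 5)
Your treatment of (I)--(V) is essentially the paper's own: (I)--(III) are the smallness/Sobolev computations the paper dismisses as ``easily checked'' (your extra attention to the uniformity of the Sobolev constant on the domains $\A{t}$ and to strict convexity is welcome; just note that in the H\"older step you should take $p$ with $(p-2)N/2\geq 2$, e.g.\ $p=2+4/N$ for $N\geq3$ --- the bound $f'(s)\leq C_p\abs s^{p-2}$ holds for \emph{every} $p>2$, not only $p$ near $2$ --- since with $p$ close to $2$ you would need an $L^q$, $q<2$, bound on $u_t$ whose embedding constant degenerates as $\abs{D_L}\to\infty$), and (IV)--(V) follow the paper's scheme: positivity of the minimal separation between distinct elements of $\K_\infty\cap[J_\infty=b_\infty]_\infty$, the convergence $\psi_Rv\to v$ in $E_\infty$ for the finite family $\F_\infty$, the identity $\half\int_{\R^N}Q\omega^2=J_\infty(\omega)=b_\infty$ coming from \eqref{eq:2.8}, and Cauchy--Schwarz for $u\in B^{(L)}_{r_0}(\wwP)$. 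Your argument that the separation constant is strictly positive (only finitely many translates matter, and a nonzero $H^1$ function cannot be periodic) is in fact more complete than the paper's, which simply asserts it; your ordering of the choices of $r_0$ and $R_0$ is consistent.

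For (VI) you take a genuinely different route: a Brezis--Kato/Moser bootstrap for $-\Delta u+u=(1-V)u+Qg(u)$, whereas the paper writes the equation as $-\Delta u+\widehat Vu=0$ with $\widehat V=V+Q\frac{h(u)}u-Q\frac{f(u)}u$, observes $(\widehat V)^-\leq Q\frac{f(u)}u\leq C\abs u^{p-2}$, and quotes the subsolution estimate of Lemma \ref{lem:A.1}; the point of that splitting is that the logarithmically singular part $h(u)/u$ enters with a favorable sign and never has to be estimated. Here your proposal contains an actual error: the growth bound $\abs{g(s)}\leq C_\delta(\abs s+\abs s^{1+\delta})$ is false near $s=0$, because $g(s)/s=\log s^2\to-\infty$ as $s\to0$; this logarithmic singularity is precisely the difficulty the whole paper is organized around, so it cannot be waved through with a linear-plus-superlinear bound. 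The slip is repairable: either use the correct bound $\abs{g(s)}\leq C_\delta(\abs s^{1-\delta}+\abs s^{1+\delta})$ --- the sublinear term does not obstruct the bootstrap and still yields a modulus $\Theta$ with $\Theta(t)\to0$ as $t\to0^+$ --- or adopt the paper's sign trick and estimate only $f(u)/u$. With that correction, and the choice of $p$ noted above, your argument goes through and proves the lemma.
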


\begin{proof}
(I)--(III) can be checked easily for $r_0>0$ small independent of $R$, $P$, $L$.
For (IV), we set
    \begin{align*}\mu_1&=\inf\{\norm{v-v'}_{E_\infty};\, v,v'\in \K_\infty\cap[J_\infty= b_\infty],v\neq v'\}\\
    &=\inf\{\norm{v-v'(\cdot-n)}_{E_\infty},\norm{v-v(\cdot-n')};\, v,v'\in \F_\infty,\, v\neq v', \\
    &\qquad\qquad\qquad n,\, n'\in\Z^N,\, n'\neq0\} >0.  
    \end{align*}
Choosing $r_0\in (0, \frac 1 3\mu_1]$, we can see \eqref{eq:5.2} holds for large $R$.

For (V), we note that $\half\int_{\R^N}Q(x)\omega^2=\half\int_{\R^N}Q(x)\omega'^2=b_\infty$.  Since 
    $$  \half\int_{D_L} Q(x)\wwP^2 \to \half\int_{\R^N}Q(x)\omega^2 + \half\int_{\R^N}Q(x)\omega'^2
        =2b_\infty
    $$
as $R\to\infty$ ($L\geq 10R$), there exists $R_0\geq 1$ such that
    $$  \half\int_{D_L} Q(x)\wwP^2 \in (2b_\infty-\half\alpha,2b_\infty+\half\alpha)
    $$
for $R$, $P$, $L$ with $R\geq R_0$ and \eqref{sharp}.  Thus, choosing $r_0>0$ small, we have
(V).\\
Proof of (IV) is rather lengthy.  We give it in the appendix.
\end{proof}

\noindent
In what follows, we assume $r\in (0,r_0)$ without loss of generality and we try to find
a critical point of $J_L$ in $B_r^{(L)}(\wwP)$.  We will use Lemma \ref{lem:5.1} repeatedly.

Our main result in this subsection is

%%%%%%%%%%%%%%%%%%%%%%%%%%%%

\begin{pro}\label{pro:5.2}
For any $r_1,r_2\in(0,r_0]$ with $r_1<r_2$ and for any $\rho>0$, there exists 
$R_1=R_1(r_1,r_2,\rho)\geq R_0$, $\nu_1=\nu_1(r_1,r_2,\rho)>0$ such that for all $R$, $P$, $L$ with 
$R\geq R_1$ and \eqref{sharp}, if 
    $$  u\in \Big( B_{r_2}^{(L)}(\wwP)\setminus B_{r_1}^{(L)}(\wwP)\Big)
        \cup \Big( (B_{r_2}^{(L)}(\wwP) \cap [J_{\A{R}}\geq\rho]_L\Big),
    $$ 
then there exists $\varphi_u\in E_L$ with $\norm{\varphi_u}_{E_L}\leq 1$ such that
\begin{enumerate}[(a)]
\item $J_L'(u)\varphi_u\geq\nu_1$;
\item moreover if $J_{\A{R}}(u)\geq\rho$, then
    $$  J_{\A{R}}'(u)\varphi_u\geq0.
    $$
\end{enumerate}
\end{pro}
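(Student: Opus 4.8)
The plan is to argue by contradiction, combining a concentration-compactness argument (Proposition \ref{pro:3.1}) with the geometric separation estimates in Lemma \ref{lem:5.1} (IV)--(V). Suppose the conclusion fails. Then there exist sequences $R_k\to\infty$ (with associated $P_k\in\Z^N$, $L_k\in\N$ satisfying \eqref{sharp}) and points
$$u_k\in\Bigl(B_{r_2}^{(L_k)}(\Omega_{R_k,P_k,L_k})\setminus B_{r_1}^{(L_k)}(\Omega_{R_k,P_k,L_k})\Bigr)\cup\Bigl(B_{r_2}^{(L_k)}(\Omega_{R_k,P_k,L_k})\cap[J_{A_{R_k,P_k,L_k}}\geq\rho]_{L_k}\Bigr)$$
such that no admissible $\varphi_u$ exists, i.e. for every $\varphi\in E_{L_k}$ with $\norm{\varphi}_{E_{L_k}}\leq1$ we have $J_{L_k}'(u_k)\varphi<1/k$ whenever also $J_{A_{R_k,P_k,L_k}}'(u_k)\varphi\geq0$ (and in the case $J_{A_{R_k}}(u_k)<\rho$, simply $J_{L_k}'(u_k)\varphi<1/k$ for all unit $\varphi$). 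The first task is to extract from this the information that $\norm{J_{L_k}'(u_k)}_{(E_{L_k})^*}\to0$. When $J_{A_{R_k}}(u_k)<\rho$ this is immediate. When $J_{A_{R_k}}(u_k)\geq\rho$ one must handle the constraint $J_{A_{R_k}}'(u_k)\varphi\geq0$: here I would use that on the annular region $A_{R_k,P_k,L_k}$ the functional $J_{A_{R_k}}$ is strictly convex on a small ball (Lemma \ref{lem:5.1}(III)), so $J_{A_{R_k}}'(u_k)$ behaves like a coercive gradient there, and project the gradient $\nabla J_{L_k}(u_k)$ onto the half-space $\{J_{A_{R_k}}'(u_k)\varphi\geq0\}$; failure to find a good $\varphi$ then forces $\nabla J_{L_k}(u_k)$ to be a nonpositive multiple of $\nabla J_{A_{R_k}}(u_k)$, and one shows this multiple must tend to $0$ using the decay/smallness of $u_k$ on $A_{R_k}$. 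Either way we conclude $\norm{J_{L_k}'(u_k)}_{(E_{L_k})^*}\to0$.

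Next I would control the energy level. Since $u_k\in B_{r_2}^{(L_k)}(\Omega_{R_k,P_k,L_k})$ and $\Omega_{R_k,P_k,L_k}\to$ "$\omega$ plus a far-away copy of $\omega'$" in a suitable sense, a direct computation (as in the proof of Lemma \ref{lem:5.1}(V), using \eqref{sharp} so the two bumps have disjoint supports and $\psi_{R_k}\omega\to\omega$, $\psi_{R_k}\omega'\to\omega'$ in $E_\infty$) gives $J_{L_k}(u_k)\to 2b_\infty$ up to a controlled error; more precisely $J_{L_k}(u_k)=2b_\infty+o(1)+O(r_2)$, and by the standing assumption $r_2\leq r_0$ with $r_0$ chosen small we may assume $J_{L_k}(u_k)\leq 2b_\infty+\tfrac{\alpha}{2}<2b_\infty+\alpha$. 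Combined with the lower bound from mountain-pass geometry we get $J_{L_k}(u_k)\to c$ for some $c>0$ along a subsequence, and $J_{L_k}(u_k)$ stays bounded, so Proposition \ref{pro:3.1} applies: there are $m\geq1$, profiles $w^\ell\in\K_\infty\setminus\{0\}$ and translation parameters $y_{j_k}^\ell\in D_{L_{j_k}}$ with $\dist_{L_{j_k}}(y_{j_k}^\ell,y_{j_k}^{\ell'})\to\infty$ and
$$\Bigl\|\,u_{j_k}-\sum_{\ell=1}^m\Phi_{L}(\psi_{R_{j_k}}w^\ell)(\cdot-y_{j_k}^\ell)\,\Bigr\|_{E_{L_{j_k}}}\to0,\qquad c=\sum_{\ell=1}^m J_\infty(w^\ell).$$

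Now comes the decisive step: identifying $m$ and the profiles. From $c=\sum_\ell J_\infty(w^\ell)$, each $J_\infty(w^\ell)\geq b_\infty$ (every element of $\K_\infty\setminus\{0\}$ lies in $\mathcal N_\infty$, so $J_\infty(w^\ell)\geq b_\infty$), hence $c\geq mb_\infty$; on the other hand $c\leq 2b_\infty+\tfrac\alpha2<3b_\infty$ (recall $\alpha<\tfrac1{10}b_\infty$), forcing $m\in\{1,2\}$. If $m=1$ then $c=J_\infty(w^1)\leq 2b_\infty+\tfrac\alpha2$, so $w^1\in\K_\infty\cap[0<J_\infty\leq b_\infty+\alpha]_\infty=\K_\infty\cap[J_\infty=b_\infty]$, and $u_{j_k}$ converges (after translation) to a single $\F_\infty$-bump; but $u_{j_k}$ stays within $r_2\leq r_0$ of a genuine two-bump configuration $\Omega_{R_k,P_k,L_k}$ whose two well-separated pieces each have $E_\infty$-mass bounded below, contradicting $\norm{u_{j_k}-\Phi_L(\psi_{R_{j_k}}w^1)(\cdot-y^1)}\to0$ (one lump cannot be $r_0$-close to a configuration with two lumps of comparable size a distance $\geq 3R_k\to\infty$ apart — quantitatively, test against a cutoff supported near the second bump). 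Hence $m=2$, $c=2b_\infty$, and $J_\infty(w^1)=J_\infty(w^2)=b_\infty$, so $w^1,w^2\in\{v(\cdot+n):v\in\F_\infty,n\in\Z^N\}$ and $u_{j_k}$ is $o(1)$-close to $\Phi_L(\psi_{R_{j_k}}w^1)(\cdot-y^1)+\Phi_L(\psi_{R_{j_k}}w^2)(\cdot-y^2)$. Matching this against $\Omega_{R_k,P_k,L_k}=\Phi_L(\psi_{R_k}\omega)+\Phi_L(\psi_{R_k}\omega')(\cdot-P_k)$ and using the separation estimate Lemma \ref{lem:5.1}(IV) (which says two $\psi_R$-cutoff $\F_\infty$-profiles that are not exact $\Z^N$-translates of each other are more than $4r_0$ apart), one of two things happens: either the matching is consistent, in which case $\norm{u_{j_k}-\Omega_{R_k,P_k,L_k}}_{E_{L_{j_k}}}\to0$, contradicting $u_{j_k}\notin B_{r_1}^{(L_{j_k})}(\Omega_{R_k,P_k,L_k})$ in the first alternative for $u_k$; or $u_{j_k}$ is genuinely $\Z^N$-translated from $\Omega_{R_k,P_k,L_k}$ in a way forbidden by $u_{j_k}\in B_{r_2}^{(L_{j_k})}(\Omega_{R_k,P_k,L_k})$ together with (IV) (the would-be matching profile is $>4r_0>2r_2$ away). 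Finally, in the remaining case $u_k\in B_{r_2}^{(L_k)}\cap[J_{A_{R_k}}\geq\rho]_{L_k}$: since $\norm{u_{j_k}-\Omega_{R_k,P_k,L_k}}\to0$ and $\Omega_{R_k,P_k,L_k}$ vanishes on $A_{R_k,P_k,L_k}$ (the cutoffs $\psi_{R_k}$ are supported in $B_{R_k/2}$), one gets $\norm{u_{j_k}}_{E_{L_{j_k}}(A_{R_k,P_k,L_k})}\to0$, whence $J_{A_{R_k}}(u_{j_k})\to0<\rho$, again a contradiction. This exhausts all cases.

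The main obstacle I anticipate is the first step — extracting $\norm{J_{L_k}'(u_k)}_{(E_{L_k})^*}\to0$ from the constrained non-existence of a descent direction in the case $J_{A_{R_k}}(u_k)\geq\rho$. One must carefully set up the two linear functionals $J_{L_k}'(u_k)$ and $J_{A_{R_k}}'(u_k)$ on the unit ball of $E_{L_k}$ and argue, via a separation/Farkas-type argument in Hilbert space, that the absence of $\varphi$ with $J_{L_k}'(u_k)\varphi\geq\nu$ and $J_{A_{R_k}}'(u_k)\varphi\geq0$ forces $\nabla J_{L_k}(u_k)=-\lambda_k\nabla J_{A_{R_k}}(u_k)+o(1)$ with $\lambda_k\geq0$; then one must bound $\lambda_k$. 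For the bound one uses that $u_k$ is small in $E_\infty$-norm on the annular region (being $r_0$-close to $\Omega_{R_k,P_k,L_k}$, which vanishes there), so $J_{A_{R_k}}'(u_k)$ restricted to $E_{L_k}(A_{R_k})$ is dominated by $\norm{u_k}_{E_{L_k}(A_{R_k})}$ up to a small super-quadratic correction controlled by \eqref{eq:5.1}; pairing the multiplier relation with $u_k$ itself and invoking $J_{L_k}'(u_k)u_k=2J_{L_k}(u_k)-\int_{D_{L_k}}Q(u_k^+)^2+\cdots$ plus the strict convexity of $J_{A_{R_k}}$ on the small ball then pins $\lambda_k$ down. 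This algebra is routine but needs care to keep all constants independent of $R_k,P_k,L_k$.
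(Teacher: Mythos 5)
Your plan for the ``annular'' alternative $u\in B_{r_2}^{(L)}(\wwP)\setminus B_{r_1}^{(L)}(\wwP)$ is essentially the paper's Step 1 (contradiction, Lemma \ref{lem:2.7} to pin the level via $\half\int Q u^2$, Lemma \ref{lem:5.1} (IV)--(V), then Proposition \ref{pro:3.1} and profile matching), and that part is sound, modulo the fact that the level must be identified through $J_{L_k}(u_k)=\half\int Q u_k^2+o(1)$ (which needs $\norm{J_{L_k}'(u_k)}\to0$ first), not by a ``direct computation'' from $u_k\in B_{r_2}^{(L_k)}$: $J_L$ contains $\int_{D_L}QH(u)$ and is not uniformly continuous on balls uniformly in $L$. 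The genuine gap is in the constrained case $J_{\A{R}}(u)\geq\rho$, which is the actual content of the proposition (note it includes points deep inside $B_{r_1}^{(L)}(\wwP)$, where the full gradient need not be small in dual norm). Your Farkas-type reduction gives, correctly, $\nabla J_{L_k}(u_k)=-\lambda_k\nabla J_{\A{R_k}}(u_k)+o(1)$ with $\lambda_k\geq0$, and pairing with $u_k$ does bound $\lambda_k$ (since $J_{\A{R_k}}'(u_k)u_k\geq J_{\A{R_k}}(u_k)\geq\rho$). But boundedness, or even smallness, of $\lambda_k$ does not yield $\norm{J_{L_k}'(u_k)}_{E_{L_k}^*}\to0$, because $\norm{J_{\A{R_k}}'(u_k)}_{E_{L_k}^*}$ is not uniformly controlled: the logarithmic term $\varphi\mapsto\int_{\A{R_k}}Q\,h(u_k)\varphi$ has dual norm of the order of $\bigl(\int u_k^2(\log u_k^2)^2\bigr)^{1/2}$ on a domain of volume $\sim L^N$, which is not bounded by $\norm{u_k}_{E_{L_k}(\A{R_k})}\leq 2r_0$ (and that smallness is only $O(r_0)$, not $o(1)$). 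Strict convexity of $J_{\A R}$ on the small ball does not repair this. A second, related gap: even granting $\norm{J_{L_k}'(u_k)}\to0$ and hence $\norm{u_k-\Omega_{R_k,P_k,L_k}}_{E_{L_k}}\to0$, your final contradiction infers $J_{\A{R_k}}(u_k)\to0$ from $\norm{u_k}_{E_{L_k}(\A{R_k})}\to0$; this ignores the nonnegative term $\int_{\A{R_k}}QH(u_k)$, which is not controlled by the $H^1$-norm on a huge domain. It can be rescued, but only by invoking the $H$-tail information in Proposition \ref{pro:3.1} (statement \eqref{eq:3.5} and the $R_\epsilon$ estimate at the end of its proof), which you do not use.

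For comparison, the paper avoids all of this by a purely constructive, compactness-free argument for the constrained case: since $\norm{u}_{E_L(\A{\frac R2})}\leq 2r_0$, a pigeonhole over the $[R/2]$ unit shells produces a shell where $\norm{u}$ is $O(r_0/\sqrt{R})$; cutting off there and taking $\varphi_u=\chi u/\norm{\chi u}_{E_L}$, one has $J_{\A{R}}'(u)(\chi u)=J_{\A{R}}'(u)u\geq J_{\A{R}}(u)\geq\rho$ (using $h(s)s\geq H(s)$ and \eqref{eq:5.1}), which gives (b), and the error from the thin transition shell is at most $\rho/2$, giving (a) with $\nu_1$ independent of $R,P,L$. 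Some such explicit localized test direction (or an equivalent splitting of directions into ``bump-supported'' and ``annulus-supported'' pieces across a pigeonholed thin shell) is the missing idea; without it, your multiplier argument stalls exactly at controlling $\nabla J_{\A{R_k}}(u_k)$.
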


\begin{proof} 
Proof is divided into two parts.  

\smallskip

\noindent
{\sl Step 1.  
For any $0<r_1<r_2\leq r_0$ there exists $\widetilde R_1=\widetilde R_1(r_1,r_2)>0$ and 
$\widetilde{\nu_1}=\widetilde{\nu_1}(r_1,r_2)>0$ such that for any 
$R$, $P$, $L$ with $R\geq \widetilde R_1$ and \eqref{sharp}
    $$  \norm{J_L'(u)}_{E_L^*} \geq \widetilde\nu_1 \ \text{for all}\ 
        u\in B_{r_2}^{(L)}(\wwP)\setminus B_{r_1}^{(L)}(\wwP).
    $$
}

\smallskip

\noindent
In fact, if the conclusion does not hold, there exist sequences $R_j$, $P_j$, $L_j$, $u_j$ satisfying
\eqref{sharp} and $u_j\in E_{L_j}$ such that
    \begin{eqnarray}
    &&R_j\to \infty, \nonumber \\
    &&u_j\in B_{r_2}^{(L_j)}(\Omega_{R_j,P_j,L_j})\setminus B_{r_1}^{(L_j)}(\Omega_{R_j,P_j,L_j}), \label{eq:5.3}\\
    &&\norm{J_{L_j}'(u_j)}_{E_{L_j}^*} \to 0.   \nonumber
    \end{eqnarray}
Clearly $\norm{u_j}_{E_{L_j}}$ is bounded.  
By Lemma \ref{lem:2.7}, we have
	$$	\norm{u_j^-}_{E_{L_j}},\ \int_{D_{L_j}} H(u_j^-), \ \int_{D_{L_j}} h(u_j^-)u_j^- \to 0 
		\quad \hbox{as}\ j\to\infty.
	$$
Thus,
	\begin{eqnarray*}
	J_{L_j}(u_j) &=& \half J_{L_j}'(u_j)u_j -\int_{D_{L_j}} Q(x)(G(u_j)-\half g(u_j)u_j) \\
		&=& \half \int_{D_{L_j}} Q(x)u_j^2 + o(1) \quad \text{as}\ j\to\infty.
	\end{eqnarray*}
Thus by Lemma \ref{lem:5.1} (V), we have
    \be\label{eq:5.4}
    J_{L_j}(u_j)\in \left(2b_\infty-\alpha,2b_\infty+\alpha\right) \quad \text{for large}\ j.
    \ee
Applying Proposition \ref{pro:3.1}, we have for some $m\in \N$ and $(w^\ell)_{\ell=1}^m\subset\K_\infty\setminus\{ 0\}$
    $$  J_{L_j}(u_j) \to \sum_{\ell=1}^m J_\infty(w^\ell) \geq mb_\infty
    $$
after extracting a subsequence.  By \eqref{eq:5.4}, we have $m\in \{ 1,2\}$.
On the other hand, by \eqref{eq:5.3}, we have for some constant $C>0$
    $$  \norm{u_j}_{H^1(B_{R_j}(0))}, \ \norm{u_j}_{H^1(B_{R_j}(P_j))} \geq C 
        \quad \text{for all}\ j,
    $$
from which we can see $m=2$.

Thus we can find for some $w^1$, $w^2\in \F_\infty$ and $y_j^1$, $y_j^2\in \Z^N$
    $$  \norm{u_j - \sum_{\ell=1}^2 \Phi_{L_j}(\psi_{R_j}w^\ell)(\cdot-y_j^\ell)}_{E_{L_j}} \to 0.
    $$
By Lemma \ref{lem:5.1} (IV), it follows from \eqref{eq:5.3} that
$w^1=\omega$, $w^2=\omega'$, $y_j^1=0$, $y_j^2=P_j$.
Therefore
    $$ \norm{u_j-\Phi_{L_j}(\psi_{R_j}\omega)-\Phi_{L_j}(\psi_{R_j}\omega')(\cdot-P_j)}_{E_{L_j}}\to 0.
    $$
In particular, we have
    $$  u_j\in B_{r_1}^{(L_j)}(\Phi_{L_j}(\psi_{R_j}\omega)+\Phi_{L_j}(\psi_{R_j}\omega')(\cdot-P_j)) \quad
        \text{for large}\ j,
    $$
which contracts with \eqref{eq:5.3}.  Thus we have the conclusion of Step 1.

\smallskip

\noindent
{\sl Step 2: For any $\rho>0$ there exists $\widetilde R_2=\widetilde R_2(\rho)>0$ and $\widetilde \nu_2=\widetilde \nu_2(\rho)>0$
such that for any $R$, $P$, $L$ with $R\geq \widetilde R_2$ and \eqref{sharp} and for any 
    \be\label{eq:5.5}
    u\in B_{r_0}^{(L)}(\wwP) \cap [J_{\A{R}} \geq \rho]_L,
    \ee
there exists $\varphi_u\in E_L$ such that
    \begin{eqnarray}
    &&\norm{\varphi_u}_{E_L} \leq 1, \label{eq:5.6}\\
    &&J_L'(u)\varphi_u \geq \widetilde\nu_2, \label{eq:5.7}\\
    &&J_{\A{R}}'(u)\varphi_u \geq 0. \label{eq:5.8}
    \end{eqnarray}
}

\smallskip

\noindent
First we remark that \eqref{eq:5.5} implies
    \begin{eqnarray*}
    \norm u_{E_L(\A{\frac R 2})} &\leq& \norm{u-\wwP}_{E_L} + \norm{\wwP}_{E_L(\A{\frac R 2})} \\
        &\leq& r_0 + \norm{\wwP}_{E_L(\A{\frac R 2})}.
    \end{eqnarray*}
Since $\norm{\wwP}_{E_L(\A{\frac R 2})}\to 0$ as $R\to \infty$ uniformly in $P$, $L$ satisfying \eqref{sharp},
we may assume that for large $\widetilde R_2\geq 0$, \eqref{eq:5.5} implies
    \be\label{eq:5.9}
    \norm u_{E_L(\A{{\frac R 2}})} \leq 2r_0.
    \ee
Denoting the maximal integer less than $\frac R 2$ by $[R/2]$, we have
    $$  \sum_{j=1}^{[R/2]} \norm u_{E_L(\A{\frac R 2+j-1})\setminus \A{\frac R 2+j})}^2
        \leq \norm u_{E_L(\A{\frac R 2})}^2 \leq 4 r_0^2.
    $$
There exists $j_u\in \{ 1,2,\cdots, {[R/2]}\}$ such that
    \be\label{eq:5.10}
        \norm u_{E_L(\A{\frac R 2+j_u-1}\setminus \A{\frac R 2+j_u})}
        \leq \frac{2r_0}{\sqrt {[R/2]}}.
    \ee
We choose a $2L$-periodic function $\chi(x)\in C^1(\R^N)$ such that
    \begin{eqnarray*}
    && \chi(x)\in [0,1],\quad \abs{\nabla \chi(x)} \leq 2 \quad \text{for all}\ x\in\R^N, \\
    && \chi(x)=\begin{cases}
                0 &\text{for}\ x\in D_L\setminus \A{\frac R 2 +j_u-1}, \\
                1 &\text{for}\ x \in \A{\frac R 2 +j_u}.
                \end{cases}
    \end{eqnarray*}
We take $\widetilde R_2$ larger so that for $R\geq \widetilde R_2$, \eqref{eq:5.10} implies
    \be\label{eq:5.11}
    \left|\int_{\A{\frac R 2+j_u-1}\setminus \A{\frac R 2+j_u}} \nabla u\nabla(\chi u) + V(x)\chi u^2
        -Q(x)f(u)\chi u\right| \leq \frac\rho 2.
    \ee
Under the assumption $J_{\A{R}}(u)\geq \rho$, we have by \eqref{eq:5.1}%
    \begin{eqnarray}
    &&J_{\A{\frac R 2+j_u}}'(u)({\chi u}) = J_{\A{\frac R 2+j_u}}'(u)u \nonumber \\
    &=& \norm u_{E_L(\A{\frac R 2 +j_u})}^2 + \int_{\A{\frac R 2+j_u}} Q(x)h(u)u 
		- \int_{\A{\frac R 2+j_u}} Q(x)f(u)u \nonumber \\
    &\geq& \half\norm u_{E_L(\A{\frac R 2+j_u})}^2 + \int_{\A{\frac R 2+j_u}} Q(x)h(u)u \nonumber \\
    &\geq& \half\norm u_{E_L(\A{\frac R 2+j_u})}^2 + \int_{\A{\frac R 2+j_u}} Q(x)H(u) \nonumber \\
    &\geq& \half\norm u_{E_L(\A{R})}^2 + \int_{\A{R}} Q(x)H(u) \nonumber \\
    &\geq& J_{\A{R}}(u) \geq \rho.  \label{eq:5.12}
    \end{eqnarray}
It follows from \eqref{eq:5.11} and \eqref{eq:5.12} that
    \begin{eqnarray*}
    &&J_L'(u)({\chi u}) \\
    &=& J_{\A{\frac R 2+j_u}}'(u)u \\
    &+&  \int_{\A{\frac R 2+j_u-1}\setminus \A{\frac R 2+j_u}} \nabla u\nabla(\chi u) + V(x)\chi u^2
        +Q(x)h(u)\chi u
        -Q(x)f(u)\chi u \\
    &\geq& \rho + \int_{\A{\frac R 2+j_u-1}\setminus \A{\frac R 2+j_u}} \nabla u\nabla(\chi u) + V(x)\chi u^2
        -Q(x)f(u)\chi u \\
    &\geq& \half\rho.
    \end{eqnarray*}
By \eqref{eq:5.9}, clearly we have
    \begin{eqnarray*}
    \norm{\chi u}_{E_L}^2 &=& \int_{D_L} \abs{\nabla(\chi u)}^2 +V(x)\chi^2u^2 \\
    &\leq& \int_{D_L} 2\abs{\nabla \chi}^2\abs u^2 + 2\chi^2\abs{\nabla u}^2+V(x)\chi^2u^2\\
    &\leq& C\norm u_{E_L(\A{\frac R 2})}^2 \leq 4Cr_0^2.
    \end{eqnarray*}
Thus, setting $\varphi_u=\frac {\chi u} {\norm{\chi u}_{E_L}}$, we have
\eqref{eq:5.6}--\eqref{eq:5.8} for a suitable constant $\widetilde \nu_2>0$ independent of $R\geq \widetilde R_2$.

\smallskip

\noindent
{\sl Step 3: Conclusion}

\smallskip

\noindent
Setting $R_1=\max\{ \widetilde R_1,\widetilde R_2\}$, $\nu_1=\min\{\widetilde \nu_1,\widetilde \nu_2\}$, we have the conclusion of 
Proposition \ref{pro:5.2}.
\end{proof}

%%%%%%%%%%%%%%%%%%%%%%%

\noindent
We have the following deformation result from the previous lemma.

\begin{lem}\label{lem:5.3}
Let $0<r_1<r_2\leq r_0$ and $\rho>0$ and suppose that $R$, $P$, $L$ satisfies \eqref{sharp} and
$R\geq R_1$, where $R_1$ is given in Proposition \ref{pro:5.2}.  
Moreover assume that
    \be\label{eq:5.13}
    \K_L\cap B_{r_2}^{(L)}(\wwP) = \emptyset.
    \ee
Then, for any $0<\epsilon<\frac 1 4\nu_1(r_2-r_1)$ and $\overline\epsilon>\epsilon$ there exists 
$\widetilde\eta\in C(E_L,E_L)$ such that
\begin{enumerate}[(i)]
\item $\widetilde \eta(u)=u$ for all $u\in (E_L\setminus B_{r_2}^{(L)}(\wwP))\cup [J_L\leq 2b_\infty-\overline \epsilon]_L$;
\item $J_L(\widetilde \eta(u)) \leq J_L(u)$ for all $u\in E_L$;
\item if $u\in B_{r_1}^{(L)}(\wwP)\cap [J_L\leq 2b_\infty+\epsilon]_L$, then 
$\widetilde\eta(u) \in B_{r_2}^{(L)}(\wwP)\cap [J_L\leq 2b_\infty-\epsilon]_L$;
\item if $u\in B_{r_1}^{(L)}(\wwP)$, then $\widetilde\eta(u)\in B_{r_2}^{(L)}(\wwP)$;
\item for $u\in B_{r_1}^{(L)}(\wwP)$,
if $J_{\A{R}}(u)\leq\rho$, then $J_{\A{R}}(\widetilde\eta(u))\leq\rho$.
\end{enumerate}
\end{lem}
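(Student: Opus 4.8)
This is a quantitative deformation lemma, and the plan is to realize $\widetilde\eta$ as the time‑$s_0$ map of a cut‑off negative pseudo‑gradient flow for $J_L$, in the spirit of the multibump deformations of Coti Zelati--Rabinowitz and S\'er\'e. I would first record what makes the construction possible. Since $\K_L\cap B_{r_2}^{(L)}(\wwP)=\emptyset$ by \eqref{eq:5.13} and $\overline{B_r^{(L)}}(\wwP)\subset B_{r_2}^{(L)}(\wwP)$ for $r<r_2$, the Palais--Smale condition (Lemma \ref{lem:4.2}) yields $\beta>0$ with $\norm{J_L'(u)}_{E_L^*}\geq\beta$ for all $u\in B_{r_2}^{(L)}(\wwP)$ satisfying $2b_\infty-\overline\epsilon\leq J_L(u)\leq2b_\infty+\epsilon$. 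Next, $\wwP$ vanishes on $\A{R}$, so $J_{\A{R}}(\wwP)=0$, and by Lemma \ref{lem:5.1} (III) $J_{\A{R}}$ is convex on $B_{r_2}^{(L)}(\wwP)$; hence $J_{\A{R}}'(v)v\geq J_{\A{R}}(v)\geq0$ there. Finally, the direction $\varphi_u$ of Proposition \ref{pro:5.2} at a point with $J_{\A{R}}(u)\geq\rho$ is, by its construction, a positive multiple of $u$ on $\A{R}$.

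I would then glue, via a locally finite partition of unity, a locally Lipschitz vector field $\mathcal W$ on $B_{r_2}^{(L)}(\wwP)$ with $\norm{\mathcal W}_{E_L}\leq1$ and $J_L'(u)\mathcal W(u)>0$: on the shell $B_{r_2}^{(L)}(\wwP)\setminus B_{r_1}^{(L)}(\wwP)$ and on $[J_{\A{R}}\geq\rho]_L$ use the $\varphi_u$ of Proposition \ref{pro:5.2} (so $J_L'(u)\mathcal W(u)\geq\tfrac12\nu_1$ there, and moreover $J_{\A{R}}'(u)\mathcal W(u)\geq0$ where $J_{\A{R}}(u)\geq\rho$, since each local piece is a positive multiple of a nearby point $u_i$ and $J_{\A{R}}'(u)\varphi_{u_i}\propto J_{\A{R}}'(u)u_i\geq J_{\A{R}}'(u)u-\sigma_i\geq J_{\A{R}}(u)-\sigma_i>0$ once the partition is fine enough); on the rest of $B_{r_2}^{(L)}(\wwP)$ use a normalized pseudo‑gradient of $J_L$, available by \eqref{eq:5.13} and Lemma \ref{lem:4.2}. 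Cutting off by $\chi_1(u)\in[0,1]$ ($\chi_1\equiv1$ on $B_{r_2-\delta'}^{(L)}(\wwP)$, $\chi_1\equiv0$ off $B_{r_2}^{(L)}(\wwP)$, $\delta'>0$ small) and $\chi_2(u)=\theta(J_L(u))$ ($\theta\equiv1$ on $[2b_\infty-\epsilon,\infty)$, $\theta\equiv0$ on $(-\infty,2b_\infty-\overline\epsilon]$), I solve $\partial_s\eta=-\chi_1(\eta)\chi_2(\eta)\mathcal W(\eta)$, $\eta(0,\cdot)=\mathrm{id}$ (extended by the identity where $\chi_1\chi_2=0$), and set $\widetilde\eta(u)=\eta(s_0,u)$ with $s_0\in(\tfrac{4\epsilon}{\nu_1},\,r_2-r_1)$, a nonempty interval because $\epsilon<\tfrac14\nu_1(r_2-r_1)$.

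Then I verify (i)--(v). (i) and (ii) are immediate: $\eta(s,\cdot)$ is the identity where $\chi_1\chi_2=0$, and $\partial_sJ_L(\eta)=-\chi_1\chi_2J_L'(\eta)\mathcal W(\eta)\leq0$. For (iv), $\norm{\eta(s_0,u)-u}_{E_L}\leq s_0<r_2-r_1$, so $u\in B_{r_1}^{(L)}(\wwP)$ forces $\eta(s_0,u)\in B_{r_2}^{(L)}(\wwP)$. For (iii), take $u\in B_{r_1}^{(L)}(\wwP)\cap[J_L\leq2b_\infty+\epsilon]_L$; if $J_L(\eta(s_0,u))>2b_\infty-\epsilon$, then by monotonicity and the displacement bound the whole orbit stays in $B_{r_2-\delta'}^{(L)}(\wwP)\cap[J_L>2b_\infty-\epsilon]_L$, where $\chi_1\chi_2\equiv1$, so $J_L$ drops by at least $\tfrac12\nu_1 s_0>2\epsilon$, forcing $J_L(\eta(s_0,u))<2b_\infty-\epsilon$, a contradiction. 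For (v), let $u\in B_{r_1}^{(L)}(\wwP)$ with $J_{\A{R}}(u)\leq\rho$; if $J_{\A{R}}(\eta(\cdot,u))$ ever exceeded $\rho$, take the first time $s'$ with $J_{\A{R}}(\eta(s',u))=\rho$; just afterwards $J_{\A{R}}(\eta)>\rho$ while the orbit is still in $B_{r_2}^{(L)}(\wwP)$ by (iv), so there $\mathcal W(\eta)$ is built from directions $\varphi_{u_i}$ with $J_{\A{R}}'(\eta)\mathcal W(\eta)\geq0$, whence $\partial_sJ_{\A{R}}(\eta)\leq0$ --- contradicting the rise past $\rho$. Hence $\{J_{\A{R}}\leq\rho\}$ is preserved, and the lemma follows.

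The crux is to design a single vector field that serves both $J_L$ and $J_{\A{R}}$: it must decrease $J_L$ on the whole active set fast enough that the displacement constraint $s_0<r_2-r_1$ still yields the $2\epsilon$ drop, while never pushing $J_{\A{R}}$ across $\rho$ from below. The second requirement is the genuinely nonstandard ingredient, and it is met only because the Proposition \ref{pro:5.2} directions coincide with $u$ on $\A{R}$ and $J_{\A{R}}$ is convex there with minimum $0$ attained at $\wwP$, so that moving against such a direction cannot increase $J_{\A{R}}$ once $J_{\A{R}}\geq\rho$; making this survive the partition of unity near $\{J_{\A{R}}=\rho\}$, where the Proposition \ref{pro:5.2} regime meets the Palais--Smale regime, is where the care goes. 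A secondary point is that near $\wwP$ the function $J_L$ may be nearly flat, so the Palais--Smale bound $\beta$ there can be small; on that region one must use the classical device of tying the flow speed to the energy decrease so that (iv) is not violated, the margin $\epsilon<\tfrac14\nu_1(r_2-r_1)$ being exactly what the uniform estimate of Proposition \ref{pro:5.2} supplies on the remaining part.
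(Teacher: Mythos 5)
Your overall construction (a partition-of-unity vector field combining the directions of Proposition \ref{pro:5.2} with a Palais--Smale based pseudo-gradient available through \eqref{eq:5.13}, the two cut-offs $\chi_1,\chi_2$, the flow, and the sign condition $J_{\A{R}}'\geq 0$ on $[J_{\A{R}}\geq\rho]_L$ to obtain (v)) is the same as the paper's, and your arguments for (i), (ii), (iv), (v) are sound for the flow time you chose. The genuine gap is in (iii). Your energy-drop estimate uses the rate $\frac12\nu_1$ along the whole active part of the orbit, but Proposition \ref{pro:5.2} supplies the uniform-in-$L$ constant $\nu_1$ only on $\bigl(B_{r_2}^{(L)}(\wwP)\setminus B_{r_1}^{(L)}(\wwP)\bigr)\cup[J_{\A{R}}\geq\rho]_L$. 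On the remaining region $B_{r_1}^{(L)}(\wwP)\cap[J_{\A{R}}<\rho]_L$ --- exactly where the sought critical point is expected to sit --- the only available lower bound on $J_L'(u)\V(u)$ is the constant $\nu_L$ (your $\beta$) coming from \eqref{eq:5.13} and the Palais--Smale condition for the fixed $L$; it bears no relation to $\nu_1$, $\epsilon$ or $r_2-r_1$ and may be far smaller than $\nu_1$. With your flow time $s_0<r_2-r_1$, which you need for the displacement-bound proof of (iv), an orbit that never leaves that region is only guaranteed a drop of $\tfrac12\nu_L s_0$, which need not reach $2\epsilon$, so the contradiction you invoke does not materialize. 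Your closing remark about ``tying the flow speed to the energy decrease'' does not repair this: slowing the flow makes the drop within the fixed time $s_0$ even smaller, while lengthening the time destroys your proof of (iv).

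The paper resolves this tension by flowing for the long, $L$-dependent time $4\epsilon/\nu_L$ and abandoning the displacement bound altogether. Property (iv) is instead a consequence of forward invariance: $\chi_1$ vanishes outside (hence, by continuity, on the boundary of) $B_{r_2}^{(L)}(\wwP)$ and the field is locally Lipschitz, so an orbit starting in $B_{r_1}^{(L)}(\wwP)$ cannot exit $B_{r_2}^{(L)}(\wwP)$. Property (iii) is then proved by a dichotomy under the assumption that $J_L$ stays above $2b_\infty-\epsilon$: either the orbit remains in $B_{(r_1+r_2)/2}^{(L)}(\wwP)$ for all $t\in[0,4\epsilon/\nu_L]$, in which case $\chi_1\chi_2\equiv1$ and the rate $\nu_L/2$ acting over the time $4\epsilon/\nu_L$ yields a drop of $2\epsilon$; or the orbit must traverse the annulus $B_{(r_1+r_2)/2}^{(L)}(\wwP)\setminus B_{r_1}^{(L)}(\wwP)$, which takes time at least $(r_2-r_1)/2$ because the speed is at most one, and there the uniform rate $\nu_1/2$ of Proposition \ref{pro:5.2} applies, producing a drop of order $\nu_1(r_2-r_1)/4$; this is precisely where the hypothesis $\epsilon<\frac14\nu_1(r_2-r_1)$ enters. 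If you keep your vector field and your treatment of (i), (ii), (v), but replace the choice $s_0\in(4\epsilon/\nu_1,\,r_2-r_1)$ by $s_0=4\epsilon/\nu_L$ and argue (iv) by invariance and (iii) by this two-case argument, the proof closes.
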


\begin{proof}
It follows from \eqref{eq:5.13} that for some constant $\nu_L>0$
    \be\label{eq:5.14}
    \norm{J_L'(u)}_{E_L^*} \geq \nu_L \quad \text{for all}\ u \in B_{r_2}^{(L)}(\wwP).
    \ee
In fact, if a sequence $(u_j)_{j=1}^\infty\subset B_{r_2}^{(L)}(\wwP)$ satisfies 
$\norm{J_L'(u_j)}_{E_L^*}\to 0$, we can see that, after extracting a subsequence,
$u_j\to u_0\in B_{r_2}^{(L)}(\wwP)$ in $E_L$ and $J_L'(u_0)=0$.  This is a contradiction to \eqref{eq:5.13}.
We may assume $\nu_L<\nu_1$ without loss of generality.

By Proposition \ref{pro:5.2} and \eqref{eq:5.14}, there exists a locally Lipschitz continuous vector field
$\V:\, B_{r_2}^{(L)}(\wwP)\to E_L$ such that
\begin{enumerate}[(1)]
\item $\norm{\V(u)}_{E_L} \leq 1$ for all $u\in E_L$;
\item $J_L'(u)\V(u)\geq \half\nu_L$ for all $u\in B_{r_2}^{(L)}(\wwP)$;
\item $J_L'(u)\V(u)\geq \half\nu_1$ for all $u\in B_{r_2}^{(L)}(\wwP)\setminus B_{r_1}^{(L)}(\wwP)$;
\item $J_{\A{R}}'(u) \V(u)\geq 0$ for all $u\in B_{r_2}^{(L)}(\wwP)$ with $J_{\A{R}}(u) \geq \rho$.
\end{enumerate}
Such a vector field can be constructed in a standard way using a partition of unity (c.f. Appendix
of Rabinowitz \cite{R}).

We define locally Lipschitz functions $\chi_1(u)$, $\chi_2(u):\, E_L\to [0,1]$ by
    \begin{eqnarray*}
    &&\chi_1(u)=\begin{cases}
        0 &\text{for}\ u\not\in B_{r_2}^{(L)}(\wwP), \\
        1 &\text{for}\ u\in B_{\frac{r_1+r_2}2}^{(L)}(\wwP),
    \end{cases} \\
    &&\chi_2(u)=\begin{cases}
        1 &\text{for}\ u\in [J_L\geq 2b_\infty-\epsilon]_L, \\
        0 &\text{for}\ u\in [J_L\leq 2b_\infty-\overline\epsilon]_L.
    \end{cases} 
    \end{eqnarray*}
We consider an initial value problem in $E_L$
    \be\label{eq:5.15}
    \left\{ \begin{aligned}
            &\frac{d\eta}{dt} = -\chi_1(\eta)\chi_2(\eta)\V(\eta),\\
            &\eta(0,u)=u.
            \end{aligned}
    \right.
    \ee
\eqref{eq:5.15} has a unique solution $\eta(t,u)\in C([0,\infty)\times E_L,E_L)$ which satisfies for all 
$t\in [0,\infty)$ and $u\in E_L$
\begin{enumerate}[(1)]
\item $\eta(t,u)=u$ for all $u\in (E_L\setminus B_{r_2}^{(L)}(\wwP))\cup [J_L\leq 2b_\infty-\overline \epsilon]_L$;
\item $\frac d {dt} J_L(\eta(t,u)) \leq 0$ for all $u\in E_L$;
\item $\frac d {dt} J_L(\eta(t,u)) \leq -\frac{\nu_1}2$ if $\eta(t,u)\in (B_{\frac{r_1+r_2}2}^{(L)}(\wwP)
\setminus B_{r_1}^{(L)}(\wwP))\cap [J_L\geq 2b_\infty-\epsilon]_L$;
\item $\frac d {dt} J_L(\eta(t,u)) \leq -\frac{\nu_L}2$ if $\eta(t,u)\in B_{r_1}^{(L)}(\wwP)
\cap [J_L\geq 2b_\infty-\epsilon]_L$;
\item $\frac d {dt} J_{\A{R}}(\eta(t,u)) \leq 0$ if $\eta(t,u)\in B_{r_2}^{(L)}(\wwP)\cap 
[J_{\A{R}}\geq \rho]_L$.
\end{enumerate}
Now we set $\widetilde\eta(u)=\eta(\frac{4\epsilon}{\nu_L},u)$.  Then $\widetilde \eta\in C(E_L,E_L)$
has the desired properties (i)--(v).

Here we show just (iii).  
We argue indirectly and suppose that $u\in B_{r_1}^{(L)}(\wwP)\cap [J_L\leq 2b_\infty+\epsilon]_L$ 
satisfies 
    \be\label{eq:5.16}
    J_L(\eta(t,u)) > 2b_\infty -\epsilon \quad \text{for all}\  t\in [0,\frac {4\epsilon}{\nu_L}].
    \ee
and consider two cases:
\begin{quotation}
\item[Case 1:] $\eta(t,u)\in B_{\frac{r_1+r_2}2}^{(L)}(\wwP)$ for all $t\in [0, \frac{4\epsilon}{\nu_L}]$,
\item[Case 2:] $\eta(t,u)\not\in B_{\frac{r_1+r_2}2}^{(L)}(\wwP)$ for some $t\in [0, \frac{4\epsilon}{\nu_L}]$.
\end{quotation}
If Case 1 occurs, we have
    $$
    \frac d{dt}J_L(\eta(t,u)) = -J_L'(\eta(t,u))\V(\eta(t,u))
    \leq -\frac{\nu_L}2 \quad \text{for all}\ t\in [0, \frac{4\epsilon}{\nu_L}].
    $$
Thus
    $$  J_L(\widetilde\eta(u))= J_L(\eta(\frac{4\epsilon}{\nu_L},u))\leq J_L(u) -2\epsilon
		\leq 2b_\infty-\epsilon.
    $$
which is in contradiction to \eqref{eq:5.16}.

If Case 2 occurs, we can find an interval $[t_0,t_1]\subset [0, \frac{4\epsilon}{\nu_L}]$ such that
    \begin{eqnarray*}
    &&\eta(t_0,u)\in \partial B_{r_1}^{(L)}(\wwP), \ \eta(t_1,u)\in \partial B_{\frac{r_1+r_2}2}^{(L)}(\wwP), \\
    &&\eta(t,u)\in B_{\frac{r_1+r_2}2}^{(L)}(\wwP)\setminus B_{r_1}^{(L)}(\wwP) \ \text{for all}\ t\in (t_0,t_1).
    \end{eqnarray*}
Since $\norm{\frac d{dt}\eta(t,u)}_{E_L} \leq \norm{\V(\eta(t,u))}_{E_L} \leq 1$ for all $t$, 
we have $t_1-t_0\geq \frac{r_2-r_1}2$.  Thus we have
    \begin{eqnarray*}
    J_L(\widetilde \eta(u)) &=& J_L(\eta(\frac{4\epsilon}{\nu_L},u)) \leq J_L(\eta(t_1,u)) \\
    &\leq& J_L(\eta(t_0,u)) - \frac{\nu_1}2 (t_1-t_0) \\
    &\leq& J_L(\eta(t_0,u)) - \frac{\nu_1}2 \frac{r_2-r_1}2 \\
    &\leq& J_L(u) -\frac{\nu_1}2 \frac{r_2-r_1}2,
    \end{eqnarray*}
which is also in contradiction to \eqref{eq:5.16}.
\end{proof}

%%%%%%%%%%%%%%%%%%%%%%%

\subsection{Minimizing problem in $\A{R}$}
To find a critical point of $J_L(u)$, we need to solve the minimizing problem in $\A{R}$.
A decay property of a unique minimizer will play an important role later.

For $R$, $P$, $L$ with $R\geq 1$, \eqref{sharp} and $\rho>0$, we set
    $$  O(\A{R},\rho)=\{ u\in E_L;\, \norm u_{E_L(\A{R})} \leq r_0,\, J_{\A{R}}(u) \leq \rho\}
    $$
and for $u\in O(\A{R},\rho)$ we also set
    $$  K_{\A{R}}(u)=\{ v\in E_L;\, v=u\ \text{in}\ D_L\setminus\A{R}, \
        \norm{v}_{E_L(\A{R})} \leq r_0\}.
    $$
We have the following existence result.

\begin{pro}\label{pro:5.4}
Suppose $\rho\in (0,\frac 1 4 r_0^2)$.  Then for any $u\in O(\A{R},\rho)$, the following
minimizing problem has a unique minimizer $v=v(\A{R};u)\in K_{\A{R}}(u)$:
    \be\label{eq:5.17}
    \inf_{v\in K_{\A{R}}(u)} J_{\A{R}}(v).
    \ee
Moreover
\begin{enumerate}[(i)]
\item $O(\A{R},\rho)\to E_L$;, $u\mapsto v(\A{R};u)$ is continuous;
\item $\norm{v(\A{R};u)}_{E_L(\A{R})} < r_0$ for all $u\in O(\A{R},\rho)$;
\item $v(\A{R};u)(x)=0$ in $\A{R}$ if $u=0$ in $\A{R}$;
\item $J_L(v(\A{R};u))\leq J_L(u)$, $J_{\A{R}}(v(\A{R};u)) \leq J_{\A{R}}(u)$ 
for all $u\in O(\A{R},\rho)$;

\end{enumerate}
\end{pro}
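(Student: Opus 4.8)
The plan is to recognize \eqref{eq:5.17} as the minimization of a strictly convex, coercive functional over a closed, bounded, convex subset of the Hilbert space $E_L$, and to extract all the stated properties from this structure together with the estimates of Lemma \ref{lem:5.1}. First I would record that $K_{\A R}(u)$ is closed, convex and bounded in $E_L$: every $v\in K_{\A R}(u)$ satisfies $\norm v_{E_L}^2=\norm v_{E_L(\A R)}^2+\norm u_{E_L(D_L\setminus\A R)}^2\le r_0^2+\norm u_{E_L}^2$, and the defining conditions ($v=u$ on $D_L\setminus\A R$, $\norm v_{E_L(\A R)}\le r_0$) are preserved under weak limits, so $K_{\A R}(u)$ is weakly sequentially compact (it is nonempty since $u\in O(\A R,\rho)$ lies in it). Next, $J_{\A R}$ is weakly sequentially lower semicontinuous on $E_L$: the restriction map $E_L\to H^1(\A R)$ is weakly continuous, the quadratic form $\half\norm\cdot_{E_L(\A R)}^2$ and the convex nonnegative term $\int_{\A R}Q(x)H(\cdot)$ are weakly lsc, and since $\A R$ is a bounded (Lipschitz) domain, Rellich's theorem together with the growth bound $F(s)\le C_p\abs s^p$ from Lemma \ref{lem:2.1}(ii) makes $v\mapsto\int_{\A R}Q(x)F(v)$ weakly continuous on bounded sets. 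Hence the infimum is attained, and uniqueness of the minimizer $v=v(\A R;u)$ follows immediately from the strict convexity of $J_{\A R}$ on $\{\norm v_{E_L(\A R)}\le r_0\}$ proved in Lemma \ref{lem:5.1}(III), since $K_{\A R}(u)$ is contained in that set and any two distinct elements of $K_{\A R}(u)$ differ on $\A R$.

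The assertions (ii)--(iv) are then mechanical consequences of \eqref{eq:5.1} and $\rho<\frac14 r_0^2$. Since $u$ itself is a competitor, $J_{\A R}(v)\le J_{\A R}(u)\le\rho<\frac14 r_0^2$; combining this with $\frac14\norm v_{E_L(\A R)}^2\le J_{\A R}(v)$ (applicable because $\norm v_{E_L(\A R)}\le r_0\le 2r_0$) gives $\norm v_{E_L(\A R)}<r_0$, which is (ii), and also shows $v$ is a free critical point of $w\mapsto J_{\A R}(w)$ among functions agreeing with $u$ on $D_L\setminus\A R$, hence solves $-\Delta v+V(x)v=Q(x)g(v)$ in $\A R$. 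For (iii): if $u=0$ on $\A R$ then $u\in K_{\A R}(u)$ with $J_{\A R}(u)=0$, and since $J_{\A R}\ge\frac14\norm\cdot_{E_L(\A R)}^2\ge0$ on $K_{\A R}(u)$ the value $0$ is the minimum, so uniqueness forces $v=u=0$ on $\A R$. For (iv): $J_{\A R}(v)\le J_{\A R}(u)$ is immediate, and because $v=u$ on $D_L\setminus\A R$ one has $J_L(v)-J_L(u)=J_{\A R}(v)-J_{\A R}(u)\le0$.

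For the continuity statement (i) I would argue as follows. Let $u_n\to u$ in $O(\A R,\rho)$ and set $v_n=v(\A R;u_n)$; the $v_n$ are bounded in $E_L$ by the estimate above, so along a subsequence $v_n\rightharpoonup\bar v$, and as before $\bar v\in K_{\A R}(u)$. To see $\bar v$ minimizes over $K_{\A R}(u)$ it suffices, by (ii), to compare with interior competitors $w\in K_{\A R}(u)$ with $\norm w_{E_L(\A R)}<r_0$: the shifted functions $w_n:=w-u+u_n$ satisfy $w_n=u_n$ on $D_L\setminus\A R$ and $\norm{w_n}_{E_L(\A R)}<r_0$ for $n$ large, hence $w_n\in K_{\A R}(u_n)$, and $w_n\to w$ in $E_L$, so $J_{\A R}(\bar v)\le\liminf J_{\A R}(v_n)\le\liminf J_{\A R}(w_n)=J_{\A R}(w)$; uniqueness then forces $\bar v=v(\A R;u)$. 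Taking $w=v(\A R;u)$ in this construction yields competitors $\widetilde w_n\to v(\A R;u)$ with $J_{\A R}(\widetilde w_n)\to J_{\A R}(v(\A R;u))\ge\limsup J_{\A R}(v_n)$, whence $J_{\A R}(v_n)\to J_{\A R}(v(\A R;u))$; since $\int_{\A R}Q(x)F(v_n)\to\int_{\A R}Q(x)F(v(\A R;u))$ and both $\half\norm\cdot_{E_L(\A R)}^2$ and $\int_{\A R}Q(x)H(\cdot)$ are weakly lsc, each of these pieces must converge, in particular $\norm{v_n}_{E_L(\A R)}\to\norm{v(\A R;u)}_{E_L(\A R)}$; combined with $v_n=u_n\to u$ on $D_L\setminus\A R$ and weak convergence this upgrades to $v_n\to v(\A R;u)$ strongly in $E_L$, and since the subsequence was arbitrary the whole sequence converges.

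I expect the continuity (i) to be the only genuine obstacle: one must handle the $u$-dependent constraint set $K_{\A R}(u)$ and recover strong convergence despite the non-smooth term $\int_{\A R}Q(x)H(\cdot)$, which is why the argument above is routed through the explicit competitors $w-u+u_n$ and the splitting of $J_{\A R}$ into a weakly lsc part and a weakly continuous part. Existence, uniqueness and (ii)--(iv) are, by contrast, essentially immediate from the strict convexity and the coercivity estimates already packaged in Lemma \ref{lem:5.1}.
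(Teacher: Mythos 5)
Your proposal is correct and follows essentially the same route as the paper: the coercivity bound $J_{\A{R}}(v)\ge\frac14\norm v_{E_L(\A{R})}^2$ and the strict convexity of $J_{\A{R}}$ on $\{\norm v_{E_L(\A{R})}\le r_0\}$ from Lemma \ref{lem:5.1}, combined with $\rho<\frac14 r_0^2$, give existence, uniqueness and the fact that the minimizer lies in the interior of $K_{\A{R}}(u)$. The paper leaves (i)--(iv) as ``easy to see''; your verification, in particular the competitors $w-u+u_n$ and the splitting of $J_{\A{R}}$ into weakly lower semicontinuous and weakly continuous parts to obtain the continuity (i), is a correct filling-in of those omitted details.
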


\begin{proof}
By Lemma \ref{lem:5.1} (I), we have
\begin{enumerate}[(1)]
\item $J_{\A{R}}(v)\geq \frac 1 4 \norm v_{E_L(\A{R})}^2$ for all $\norm v_{E_L(\A{R})}\leq r_0$;
\item $J_{\A{R}}(v)$ is strictly convex on $K_{\A{R}}(u)$.
\end{enumerate}
Thus, under the assumption $\rho\in (0,\frac 1 4 r_0^2)$, we have
    $$  \inf_{v\in K_{\A{R}}(u)} J_{\A{R}}(v) \leq J_{\A{R}}(u) \leq \rho < \frac 1 4 r_0^2 \leq
		\inf_{v\in \partial K_{\A{R}}(u)} J_{\A{R}}(v)
    $$ 
and the infimum \eqref{eq:5.17} is achieved in ${\rm int}\, K_{\A{R}}(u)$.
Moreover the minimizer $v(\A{R};u)\in {\rm int}\, K_{\A{R}}(u)$ is unique.  It is easy to see that
(i)--(iv) hold.

\end{proof}

\noindent
We have the following decay estimate for the unique minimizer $v(\A{R};u)$ obtained in 
Proposition \ref{pro:5.4}.

%%%%%%%%%%%%%%%%%%%%%%%%%%%%%%%%%%%%%%%%%%%%%%%%%%%%%%%%%%%%%%%%%%%%%%%%%%%%%%%%%%%%

\begin{lem}\label{lem:5.5}
There exist constants $R_2>0$, $A_1$, $A_2>0$ independent of $P$, $L$ such that
if $R\geq R_2$, then
    $$  \abs{v(\A{R};u)(x)}, \ \abs{\nabla v(\A{R};u)(x)} \leq A_1e^{-A_2R}
        \quad \text{for all}\ x\in \A{2R}.
    $$
\end{lem}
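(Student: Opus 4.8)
The plan is to exploit that $v \equiv v(\A{R};u)$ solves the Euler--Lagrange equation associated to the minimizing problem \eqref{eq:5.17} in the open set where the constraint $\norm{v}_{E_L(\A{R})} \le r_0$ is inactive. By Proposition \ref{pro:5.4} (ii) we have $\norm{v}_{E_L(\A{R})} < r_0$, so $v$ is an interior minimizer and hence a weak solution of
    $$  -\Delta v + V(x)v = Q(x)g(v) \quad \text{in the interior of}\ \A{R},
    $$
with $v = u$ on $\partial \A{R} = \partial B_R(0) \cup \partial B_R(P)$ (boundary in the sense of the periodic domain). The key point is that $\norm{v}_{E_L(\A{R})} < r_0$ is small, so by Lemma \ref{lem:5.1} (VI) (the elliptic $L^\infty$-bound on balls where the $H^1$-norm is $\le r_0$) we get $\abs{v(x)} \le 1/e$ on a slightly smaller region, say on $\A{R+2}$. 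On that set $g(v) = v\log v^2$, and since $\abs{v} \le 1/e$ we have $\abs{g(v)} = \abs{v}\,\abs{\log v^2} \le \abs{v}\cdot$ (something growing only logarithmically); in particular, for any fixed $\sigma > 0$ there is $C_\sigma$ with $\abs{g(v)} \le C_\sigma \abs{v}^{1-\sigma}$, or more simply $\abs{g(v)} \le \frac12 V_0 \abs v$ near the relevant range after shrinking — using $V(x) \ge V_0 > 0$ by (A1), (A2) and periodicity. Thus $v$ satisfies a differential inequality $-\Delta v + \frac12 V_0 v \le 0$ is not quite what we want; rather, the cleanest route is a comparison/subsolution argument: $\abs{v}$ is a subsolution of $-\Delta w + c\, w = 0$ for a suitable $c > 0$ on $\A{R+2}$, after the logarithmic factor has been absorbed.

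The core steps, in order, are: (1) record that $v$ solves the PDE in the interior of $\A{R}$ and $\abs v \le 1/e$ on $\A{R+2}$ via Lemma \ref{lem:5.1} (VI), applied on a covering of $\A{R+2}$ by unit balls (the uniform smallness $\norm{v}_{E_L(\A{R})} < r_0$ makes the local hypothesis $\norm{v}_{H^1(B_2(x_0))} \le r_0$ hold for every such ball once $R$ is large, after possibly shrinking $r_0$, because the local norms are dominated by the global one); (2) on $\A{R+2}$, use $-v\log v^2 = h(v) \ge 0$ for $0 \le v \le 1/e$ together with $V(x) \ge V_0$ to conclude that $w := \abs v$ satisfies, in the weak/distributional sense, $-\Delta w \le -V(x) w + Q(x) g(v)\,\mathrm{sgn}(v) \le (\norm{Q}_{L^\infty}\abs{\log w^2} - V_0) w$; since $\abs{\log w^2}$ is bounded by, say, $2\log(e/w)$ which grows slower than any power, for $R$ large the factor on the right is still negative once $w$ is pointwise small — but $w$ need not be small everywhere, only $\le 1/e$. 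To handle this honestly I would instead use the elementary bound $g(s) \le \epsilon^{-1} s$ fails, so better: observe $s\abs{\log s^2} \le C_\beta s^{2-\beta} + C_\beta s^{\beta}$ style estimates are awkward; the clean statement is $|g(v)| \le K v$ is false for $v$ near $0$. So the right comparison is with $-\Delta W + \mu W = 0$ where I allow a slightly degraded exponent: since $\abs{g(v)}/v = \abs{\log v^2} \to \infty$ as $v\to 0$, but $\abs{\log v^2} \le \abs{\log \delta^2}$ once $v \ge \delta$, I split $\A{R+2}$ into $\{v \ge \delta\}$ and $\{v < \delta\}$ — but this is getting complicated. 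The honest approach: take $\delta > 0$ tiny; on all of $\A{R+2}$, $\abs{g(v)} \le \max(|g(v)| : v \le 1/e)$-type estimate doesn't close either.

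The cleanest and standard route, which I would actually carry out: show $w = \abs v$ is a weak subsolution of $-\Delta w + \tfrac12 V_0 w \le \big(Q(x)\abs{\log v^2} - \tfrac12 V_0\big) w =: f_R$, note that $\norm{f_R}_{L^2(\A{R+2})} \to 0$ is \emph{not} automatic, so instead use a barrier: since $\int_{\A{R}} H(v) \le J_{\A{R}}(v) \cdot \text{const}$ and $\norm v_{E_L(\A{R})} < r_0$, an $L^q$-elliptic bootstrap ($q$ large, using $\abs{g(v)} \le C_q\abs v^{1-1/q}$ which \emph{is} true since $s\abs{\log s^2}\le C_q s^{1-1/q}$ for $0<s\le 1/e$) gives $\norm v_{L^\infty(\A{R+4})} \le C$ and more importantly an interior $L^\infty$ bound that is \emph{uniform and small} — from which $\abs{\log v^2}$ is under control only where $v$ is not exponentially small. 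At this point the decay itself: cover the "inner boundary layer" and use the classical exponential-decay lemma for $-\Delta v + \frac12 V_0 v \le 0$ (valid once $Q\abs{\log v^2} \le \frac12 V_0$, i.e. once $v \le e^{-V_0/(4\norm Q_{L^\infty})}=:\delta_0$). On the region $\{x \in \A{R+2} : v(x) \le \delta_0\}$, which contains a neighborhood of width $\ge 1$ of $\partial\A{R}$ minus a set of small measure — hmm, I need $v \le \delta_0$ on a full annular shell. This is exactly the main obstacle.

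\textbf{Main obstacle.} The genuine difficulty is passing from "$\abs v \le 1/e$ on $\A{R+2}$ with small $H^1$-norm" to "$\abs v \le \delta_0$ on some annular shell $\A{R}\setminus \A{R+k}$ of definite width", which is what is needed to start the exponential-decay iteration with a strictly positive mass term $\frac12 V_0$. The resolution I expect: since $\norm v_{E_L(\A{R})} < r_0$ with $r_0$ small, and the shells $S_j := \A{R+j-1}\setminus\A{R+j}$ have disjoint contributions summing to $\le r_0^2$, most shells have $\norm v_{E_L(S_j)}$ as small as we like; combined with the local $L^\infty$–$H^1$ elliptic estimate (Lemma \ref{lem:5.1} (VI) type, localized), on such a shell $\norm v_{L^\infty(S_j)} \le \delta_0$. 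Then on $\A{R+j} \supset \A{2R}$ we have the linear inequality $-\Delta v + \tfrac12 V_0 v \le 0$ with $\abs v \le \delta_0$ on $\partial S_j$'s inner face, and the standard comparison with $A_1 e^{-A_2(\dist)}$ barriers — using that $\A{2R}$ is at distance $\ge R$ (actually $\ge R/2 + $ const after the shell adjustments) from $\partial\A{R}$ — yields $\abs v, \abs{\nabla v} \le A_1 e^{-A_2 R}$ on $\A{2R}$, the gradient bound following from interior elliptic (Schauder or $W^{2,p}$) estimates applied on unit balls inside $\A{2R}$. The constants $R_2, A_1, A_2$ are manifestly independent of $P, L$ because all estimates used (Sobolev, elliptic regularity on unit balls, the barrier construction) are local and translation-invariant, and $V, Q$ are periodic hence have $P,L$-independent bounds.
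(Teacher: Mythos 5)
Your opening moves coincide with the paper's: $v=v(\A{R};u)$ is an interior minimizer by Proposition \ref{pro:5.4} (ii), hence solves $-\Delta v+V(x)v=Q(x)g(v)$ inside $\A{R}$, and Lemma \ref{lem:5.1} (VI), applied on unit balls, gives $\abs v\leq e^{-1}$ on an inner annular region. But from that point on you treat the logarithmic term as a hostile source whose size $\abs{\log v^2}$ must be dominated by the mass term, and this is where the proposal breaks. The whole point is the \emph{sign}: for $0<\abs s\leq e^{-1}$ one has $f(s)=0$ and $g(s)=-h(s)$ with $h(s)$ of the same sign as $s$, so once $\abs v\leq e^{-1}$ the equation reads $-\Delta v+V(x)v+Q(x)h(v)=0$, i.e. $-\Delta v+\bigl(V(x)+Q(x)\abs{\log v^2}\bigr)v=0$ where $v>0$ (and symmetrically where $v<0$). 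The logarithmic factor only \emph{strengthens} the effective potential, so $\abs v$ is a weak subsolution of $-\Delta w+V_0w\leq 0$ on the whole inner region with no further smallness of $v$ required, and the standard barrier/maximum-principle comparison at distance of order $R$ gives $\abs v\leq A_1e^{-A_2R}$ on $\A{\frac 3 2 R}$, with $\abs{\nabla v}$ then controlled on $\A{2R}$ by interior elliptic regularity on unit balls. This is exactly the paper's proof, and it is three lines once the sign is observed.

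Consequently your declared ``main obstacle'' (upgrading $\abs v\leq e^{-1}$ to $\abs v\leq\delta_0$ on a full shell before the decay iteration can start) is a phantom, and the workaround you build around it does not close as written. Your justification of $-\Delta v+\tfrac12 V_0v\leq 0$ is backwards: the condition $Q\abs{\log v^2}\leq\tfrac12 V_0$ holds where $v$ is \emph{not} small (near $1$), whereas your threshold $v\leq e^{-V_0/(4\norm Q_{L^\infty})}$ makes $\abs{\log v^2}$ large; and even granting your reading, you only obtain $v\leq\delta_0$ on a thin pigeonholed shell $S_j$, not on the whole region $\A{R+j}$ where you subsequently invoke the differential inequality, so the comparison argument is not established where it is needed. (In fact the inequality is true on all of the inner region, but for the sign reason above, which your argument never uses.) Once the sign observation is in place, the pigeonhole-over-shells step and the $\delta_0$ threshold are superfluous, and the remaining ingredients of your sketch --- barriers over a distance comparable to $R$, unit-ball elliptic estimates for the gradient, and $P,L$-independence of all constants by periodicity of $V,Q$ --- are correct and agree with the paper.
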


\begin{proof}%[Proof of Lemma \ref{lem:5.5}]

We know  $v(x)=v(\A{R};u)(x)$ is the unique solution in $K_{\A{R}}(u)$ for 
    $$%\be%label{eq:5.18}%
    \left\{
    \begin{aligned}-&\Delta v + V(x)v+Q(x)h(v)-Q(x)f(v)=0 \quad\text{in} \ \A{R},\\
    &v=u \qquad\qquad\qquad\qquad\qquad\qquad\qquad\qquad \text{in}\ D_L\setminus \A{R}.
    \end{aligned}\right. 
    $$
By (VI) of Lemma \ref{lem:5.1}, we see that $v(x)$ satisfies
	$$	\abs{v(x)} \leq \frac 1 e \quad \text{in}\ \A{R+1}.
	$$
In particular, $v(x)$ solves
    $$
    %\left\{
    %\begin{aligned}
	-\Delta v + V(x)v+Q(x)h(v)=0 \quad\text{in} \ \A{R+1}.
	%    &\absv=u \qquad\qquad\qquad\qquad\qquad\quad\text{in}\ D_L\setminus \A{R}.
    %\end{aligned}\right. 
    $$
By the maximal principle, we can get the exponential decay of $v$ in $\A{\frac 3 2 R}$, that is,
	$$	\abs{v(x)} \leq A_1e^{-A_2R} \quad \text{in}\ \A{\frac 3 2 R}.
	$$
By the regularity argument, we also have the exponential decay of $\nabla v$ in $\A{2R}$.
\end{proof}

%%%%%%%%%%%%%%%%%%%%%%%%%%%%%%%%%%%%%%%%%%%%%%%%%%%%%%%%%%%%%%%%%%
%%%%%%%%%%%%%%%%%%%%%%%%%%%%%%%%%%%%%%%%%%%%%%%%%%%%%%%%%%%%%%%%%%

\subsection{Proof of Theorem \ref{thm:2}}
For any given $r\in (0,\half r_0]$ we try to find a critical point $J_L(u)$ in $B_r^{(L)}(\wwP)$
for large $R$, $P$, $L$ with \eqref{sharp} to prove Theorem \ref{thm:2}.

\begin{proof}[Proof of Theorem \ref{thm:2}]
Proof of Theorem \ref{thm:2} is divided into several steps.

\smallskip

\noindent
{\sl Step 1: Choice of parameters $r_1$, $r_2$, $\rho$, $\epsilon$, $\overline\epsilon$}

\smallskip

\noindent
First we choose parameters which will be used in the proof of Theorem \ref{thm:2}.

For any given $r\in (0,\half r_0]$, we set $r_1=\half r$, $r_2=r$, $\rho=\frac 1 8 r^2$.
We apply Proposition \ref{pro:5.2} to obtain $R_1=R_1(r_1,r_2,\rho)$, 
$\nu_1=\nu_1(\rho)>0$.

Next we choose $T>1$ such that
    $$  J_\infty(T\omega), \ J_\infty(T\omega') <0.
    $$
We also choose $\delta_0>0$ with a property: for $\theta>0$
    \begin{align} 
    &J_\infty(\theta \omega) \geq b_\infty-\delta_0 \ (\text{resp.} \ J_\infty(\theta \omega') \geq b_\infty-\delta_0)
    \ \text{implies} \nonumber\\
    &\quad \norm{\theta \omega-\omega}_{E_\infty} < \frac r 4 \ 
    (\text{resp.}\ \norm{\theta \omega'-\omega'}_{E_\infty} < \frac r 4).  \label{eq:5.18}
    \end{align}
Next we set $\overline\epsilon=b_\infty/3$ and we choose
    $$  \epsilon\in \left(0,\min\{\overline\epsilon,\frac {\delta_0} 2,\frac{\nu_1} 4(r_2-r_1)\}\right).
    $$
To show Theorem \ref{thm:2}, we argue indirectly and assume
	$$	\K_L\cap B_r^{(L)}(\wwP)=\emptyset.
	$$
The by Lemma \ref{lem:5.3}, there exists $\widetilde\eta\in C(E_L,E_L)$
satisfying properties (i)--(v) of Lemma \ref{lem:5.3}.

For $(\theta_1,\theta_2)\in [0,T]^2$, we set
    $$  G(\theta_1,\theta_2)(x)=\theta_1\Phi_L({\psi_R\omega})(x) + \theta_2\Phi_L({\psi_R\omega'})(x-P).
    $$
We choose 
    \be\label{eq:5.19}
    R_3\geq \max\{ R_1, R_2\}
    \ee
such that for $R$, $P$, $L$ with $R\geq R_3$ and \eqref{sharp}
    \begin{eqnarray}
    &&J_L(G(\theta_1,\theta_2)) \leq b_\infty+\overline\epsilon \quad \text{for all}\
    (\theta_1,\theta_2)\in \partial([0,T]^2);  \label{eq:5.20}\\
    &&J_L(G(\theta_1,\theta_2)) \leq 2b_\infty+\epsilon \quad \text{for all}\
    (\theta_1,\theta_2)\in [0,T]^2;  \label{eq:5.21}\\
    &&J_L(G(\theta_1,\theta_2)) \geq 2b_\infty-\epsilon \ \text{implies}\ 
        G(\theta_1,\theta_2)\in B_{r/2}^{(L)}(\wwP);  \label{eq:5.22}\\
    &&G(\theta_1,\theta_2)(x) =0 \quad \text{in}\ \A{R}\ \text{for all}\ (\theta_1,\theta_2)\in [0,T]^2;
    \label{eq:5.23}\\
    &&b_L\geq b_\infty -\frac\epsilon 8 \quad \text{for all}\ L\geq 10R; \label{eq:5.24}\\
    && A_1e^{-2A_2R}(3+\norm V_{L^\infty}+2\norm Q_{L^\infty}A_2R)\omega_{N-1}(2R+1)^N <\frac \epsilon 2;
        \label{eq:5.25}
    \end{eqnarray}
where $A_1$, $A_2$ are constant appeared in Lemma \ref{lem:5.5} (c.f. \eqref{eq:5.29}).\\
To find $R_3$, we note that
    \begin{eqnarray}
    &&J_L(\theta_1\Phi_L({\psi_R \omega}))\to J_\infty(\theta_1\omega)\leq b_\infty; \nonumber \\
    &&J_L(\theta_2\Phi_L({\psi_R \omega'}))\to J_\infty(\theta_2\omega')\leq b_\infty; \nonumber \\
    &&J_L(G(\theta_1,\theta_2))=J_L(\theta_1\Phi_L({\psi_R \omega}))+J_L(\theta_2\Phi_L({\psi_R \omega'})) \nonumber\\
        &&\qquad\qquad\qquad\to J_\infty(\theta_1 \omega)+J_\infty(\theta_2 \omega'); \nonumber \\
    &&\norm{(1-\psi_R)\omega}_{E_\infty},\ \norm{(1-\psi_R)\omega}'_{E_\infty}\to 0;
        \label{eq:5.26}
    \end{eqnarray}
as $R\to\infty$ uniformly in $(\theta_1,\theta_2)\in [0,T]^2$.

We can easily find $R_3\geq\max\{ R_1,R_2\}$ such that \eqref{eq:5.20}--\eqref{eq:5.21} hold for $R\geq R_3$.  
For \eqref{eq:5.22},
we note that $J_L(G(\theta_1,\theta_2))\geq 2b_\infty-\epsilon$ implies $J_L(\theta_1\Phi_L({\psi_R\omega}))$,
$J_L(\theta_2\Phi_L({\psi_R\omega'}))\geq b_\infty-\frac 3 2\epsilon$ for large $R$ and thus 
$J_\infty(\theta_1\omega)$, $J_\infty(\theta_2\omega)\geq b_\infty-2\epsilon\geq b_\infty-\delta_0$ for large $R$.  
Thus from \eqref{eq:5.18} and \eqref{eq:5.26}, we have
\eqref{eq:5.22}.  Properties \eqref{eq:5.23} and \eqref{eq:5.24} follow from the definition of $G(\theta_1,\theta_2)$ and 
Theorem \ref{thm:3}.  Property \eqref{eq:5.25} is easily checked.

\smallskip

\noindent
{\sl Step 2: Definition of $\widehat G(\theta_1,\theta_2)$ and its properties}

\smallskip

\noindent
We set
    $$  \widehat G(\theta_1,\theta_2)=\widetilde\eta(G(\theta_1,\theta_2))\in C([0,T]^2,E_L),
    $$
where $\widetilde\eta\in C(E_L,E_L)$ is defined in Lemma \ref{lem:5.3}.
$\widehat G(\theta_1,\theta_2)$ has the following properties:
\begin{enumerate}[(1)]
\item $\widehat G(\theta_1,\theta_2)=G(\theta_1,\theta_2)$ for all $(\theta_1,\theta_2)\in \partial([0,T]^2)$;
\item $J_L(\widehat G(\theta_1,\theta_2))\leq 2b_\infty-\epsilon$ for all $(\theta_1,\theta_2)\in [0,T]^2$;
\item $\norm{\widehat G(\theta_1,\theta_2)}_{E_L(\A{R})}\leq r_0$,\ 
$J_{\A{R}}(\widehat G(\theta_1,\theta_2)) \leq \frac 1 8r^2$ for all $(\theta_1,\theta_2)\in [0,T]^2$.  \\
In particular, $\widehat G(\theta_1,\theta_2)\in O(\A{R},\frac 1 8 r^2)$ for all $(\theta_1,\theta_2)\in [0,T]^2$.
\end{enumerate}
(1) and (2) follow from Lemma \ref{lem:5.3} and \eqref{eq:5.20}--\eqref{eq:5.22}.  For (3), we note that
$\widetilde\eta(u)=u$ or $\widetilde\eta(u)\in B_{r_2}^{(L)}(\wwP)$ holds for all $u\in E_L$.
By \eqref{eq:5.23}, we have $\norm{\widehat G(\theta_1,\theta_2)}_{E_L(\A{R})} \leq r_0$ for all
$(\theta_1,\theta_2)\in [0,T]^2$.  We also deduce \\
$J_{\A{R}}(\widehat G(\theta_1,\theta_2)) \leq\frac 1 8r^2$ from Lemma \ref{lem:5.3}.

\smallskip

\noindent
{\sl Step 3: Definition of $\hhG(\theta_1,\theta_2)$ and its properties}

\smallskip

\noindent
By the property (3) of $\widehat G(\theta_1,\theta_2)$, we can define
    $$  \hhG(\theta_1,\theta_2)=v(\A{R};\widehat G(\theta_1,\theta_2))\in C([0,T]^2,E_L).
    $$
$\hhG(\theta_1,\theta_2)$ has the following properties:
\begin{enumerate}[(1)]
\item \be\label{eq:5.27}
    \hhG(\theta_1,\theta_2)=G(\theta_1,\theta_2)\ \text{for all}\  (\theta_1,\theta_2)\in \partial([0,T]^2);
    \ee
\item $J_L(\hhG(\theta_1,\theta_2))\leq 2b_\infty-\epsilon$ for all $(\theta_1,\theta_2)\in [0,T]^2$;
\item $\norm{\hhG(\theta_1,\theta_2)}_{E_L(\A{R})}\leq r_0$
for all $(\theta_1,\theta_2)\in [0,T]^2$;
\item $\abs{\hhG(\theta_1,\theta_2)}$, $\abs{\nabla\hhG(\theta_1,\theta_2)}\leq A_1e^{-A_2R}$
for all $x\in \A{2R}$ and $(\theta_1,\theta_2)\in [0,T]^2$.
\end{enumerate}
These properties follow from Proposition \ref{pro:5.4} and Lemma \ref{lem:5.5}.

\smallskip

\noindent
{\sl Step 4: Definition of $\oG(\theta_1,\theta_2)$ and its properties}

\smallskip

\noindent
We choose $2L$-periodic functions $\zeta_1(x)$, $\zeta_2(x)\in C^\infty(\R^N,\R)$ such that
    \begin{eqnarray*}
    &&\zeta_1(x)=\begin{cases}
        1   &\text{for}\ x\in B_{2R}(0),\\
        0   &\text{for}\ x\in D_L\setminus B_{2R+1}(0),
        \end{cases}\\
    &&\zeta_2(x)=\begin{cases}
        1   &\text{for}\ x\in B_{2R}(P),\\
        0   &\text{for}\ x\in D_L\setminus B_{2R+1}(P),
        \end{cases}\\
    &&\zeta_1(x),\, \zeta_2(x)\in [0,1] \ \text{for all}\ x\in \R^N,\\
    &&\abs{\nabla\zeta_1(x)},\, \abs{\nabla\zeta_2(x)} \leq 2 \ \text{for all}\ x\in \R^N.
    \end{eqnarray*}
We define $\overline g_1(\theta_1,\theta_2)$, $\overline g_2(\theta_1,\theta_2)$, 
$\oG(\theta_1,\theta_2):\, [0,T]^2\to E_L$ by
    \begin{eqnarray*}
    &&\overline g_1(\theta_1,\theta_2)(x)=\zeta_1(x)\hhG(\theta_1,\theta_2)(x), \\
    &&\overline g_2(\theta_1,\theta_2)(x)=\zeta_2(x)\hhG(\theta_1,\theta_2)(x), \\
    &&\oG(\theta_1,\theta_2)(x)=\zeta_1(x)\hhG(\theta_1,\theta_2)(x)+\zeta_2(x)\hhG(\theta_1,\theta_2)(x).
    \end{eqnarray*}
$\overline g_1(\theta_1,\theta_2)$, $\overline g_2(\theta_1,\theta_2)$, $\oG(\theta_1,\theta_2)$ have
the following properties:
\begin{enumerate}[(1)]
\item for $(\theta_1,\theta_2)\in\partial([0,T]^2)$
    \begin{eqnarray}
    &&\oG(\theta_1,\theta_2)=G(\theta_1,\theta_2),    \nonumber\\
    &&\overline g_1(\theta_1,\theta_2)(x)=\theta_1\Phi_L({\psi_R \omega})(x), \nonumber\\
    &&\overline g_2(\theta_1,\theta_2)(x)=\theta_2\Phi_L({\psi_R \omega'})(x-P) \label{eq:5.28}
    \end{eqnarray}
\item $J_L(\oG(\theta_1,\theta_2)) \leq 2b_\infty-\half\epsilon$ for all 
$(\theta_1,\theta_2)\in [0,T]^2$.
\end{enumerate}
(1) follow from \eqref{eq:5.27}.
To see (2), we write $\hhu=\hhG(\theta_1,\theta_2)$, $\ou=\oG(\theta_1,\theta_2)$ and we compute
    $$  J_L(\ou)-J_L(\hhu) = J_{\A{2R}}(\ou)-J_{\A{2R}}(\hhu) \leq J_{\A{2R}}(\ou).
    $$
Here we used Lemma \ref{lem:5.1} (III).
Since $\abs{\ou}\leq \abs{\hhu}\leq A_1e^{-A_2R}$, $\abs{\nabla\ou}\leq 2\abs{\hhu} +\abs{\nabla\hhu}
\leq 3A_1e^{-A_2R}$ in $\A{2R}$, we have
    \begin{eqnarray}
    && J_{\A{2R}}(\ou) \nonumber\\
    &=&\half \int_{\A{2R}\setminus \A{2R+1}} \abs{\nabla\ou}^2 +V(x)\ou^2
        -Q(x)\ou^2\log\ou^2 \nonumber\\
    &\leq& \half A_1^2 e^{-2A_2R}\left(3+\norm V_{L^\infty} +\norm Q_{L^\infty}(-\log A_1^2+2A_2R)\right)\nonumber\\
    &&\qquad\qquad  \times \text{meas}(\A{2R}\setminus \A{2R+1})\nonumber\\
    &\leq& A_1^2e^{-2A_2R}(3+\norm V_{L^\infty}+2\norm Q_{L^\infty} A_2R)\omega_{N-1}(2R+1)^N. \label{eq:5.29}
    \end{eqnarray}
By our choice of $R_3$, we have $J_{\A{2R-1}}(\ou)<\half\epsilon$.  Thus we have
    \begin{eqnarray*}
    J_L(\ou) &=& J_L(\hhu) +(J_L(\ou) - J_L(\hhu)) \leq  J_L(\hhu) + J_{\A{2R}}(\ou) \\
    &\leq& 2b_\infty -\epsilon + \half\epsilon = 2b_\infty -\half \epsilon.
    \end{eqnarray*}
Thus we get (2).

\smallskip

\noindent
{\sl Step 5: An intersection result and the end of proof}

\smallskip

\noindent
By the property \eqref{eq:5.28}, for any curve $\gamma(s):\, [0,1]\to [0,T]^2$ with $\gamma(0)\in \{0\}\times [0,T]$, 
$\gamma(1)\in \{T\}\times [0,T]$ (resp. $\gamma(0)\in [0,T]\times \{0\}$, 
$\gamma(1)\in [0,T]\times \{T\}$), a path $\overline g_1(\gamma(s))$ (resp. $\overline g_2(\gamma(s))$)
is a path joining $0$ and $T\Phi_L({\psi_R\omega})$ (resp. $T\Phi_L({\psi_R\omega'})(\cdot -P)$).
Noting $J_L(T\Phi_L({\psi_R\omega}))$, $J_L(T\Phi_L({\psi_R\omega'}))<0$, they can be regarded as a sample path
corresponding to mountain pass theorem. 

As in Proposition 3.4 of Coti Zelati-Rabinowitz \cite{CZR3}, there exists a $(\overline\theta_1,\overline\theta_2)
\in [0,T]^2$ such that
    $$  J_L(\overline g_1(\overline\theta_1,\overline\theta_2))\geq b_L,\quad
        J_L(\overline g_2(\overline\theta_1,\overline\theta_2))\geq b_L.
    $$
Thus we have
    \begin{eqnarray*}
    J_L(\oG(\overline\theta_1,\overline\theta_2)) &=& J_L(\overline g_1(\overline\theta_1,\overline\theta_2))
        + J_L(\overline g_2(\overline\theta_1,\overline\theta_2))\\
    &\geq& 2b_L.
    \end{eqnarray*}
Therefore we have
    $$  2b_\infty -\half\epsilon \geq 2b_L.
    $$
This contradicts with \eqref{eq:5.24} and thus $\K_L\cap B_r^{(L)}(\wwP)\not=\emptyset$.
Thus, choosing $R_{r0}\geq R_3$, we complete the proof of Theorem \ref{thm:2}.
\end{proof}

%%%%%%%%%%%%%

\subsection{Proof of Theorem \ref{thm:1}}
Finally we give a proof of our Theorem \ref{thm:1}.

\begin{proof}[Proof of Theorem \ref{thm:1}]
Let $R_3>0$ be a number given in \eqref{eq:5.19}.  We take $R_{r1}\geq R_3$ such that
    \be\label{eq:5.30}
    \norm{\omega-\psi_{R_{r1}}\omega}_{E_\infty},\ \norm{\omega'-\psi_{R_{r1}}\omega'}_{E_\infty} \leq \frac r 2.
    \ee
We fix $P\in \Z^N$ with $\abs P\geq 5{R_{r1}}$.  By Theorem \ref{thm:2}, for any $L\geq 2\abs P$ there
exists a critical point $u_L\in \K_L\cap B_r^{(L)}(\Omega_{R_{r1},P,L})$.  By Lemma \ref{lem:2.7} (ii) and Lemma \ref{lem:5.1}
we have
    $$  J_L(u_L)=\half\int_{D_L}Q(x)u_L^2\in [2b_\infty-\alpha,2b_\infty+\alpha].
    $$
We apply our concentration-compactness result (Proposition \ref{pro:3.1}) to $u_L$ ($L=2\abs P+1, 2\abs P+2, \cdots$).
After extracting a subsequence $L_j\to\infty$, we have for some $w_0\in \K_\infty$
    \begin{eqnarray}
    &&\norm{u_{L_j}-\Phi_{L_j}(\psi_{L_j/2}w_0)}_{E_{L_j}} \to 0,  \label{eq:5.31}\\
    &&J_{L_j}(u_{L_j}) \to J_\infty(w_0) \quad \text{as}\ j\to\infty.  \label{eq:5.32}
    \end{eqnarray}
In fact, if not, we have $m\geq 2$ in the statement of Proposition \ref{pro:3.1} and for some
sequence $(y_j)_{j=1}^\infty\subset\R^N$ with $y_j\in D_{L_j}$ and $\abs{y_j}\to\infty$
    $$  \liminf_{j\to\infty} \norm{u_{L_j}}_{H^1(B_{L_j/2}(y_j))}>0,
    $$
which is in a contradiction to $u_L\in B_r^{(L)}(\Omega_{R_{r1},P,L})$.  
It easily follows from \eqref{eq:5.31}--\eqref{eq:5.32} that
    \begin{eqnarray*}
    &&\norm{w_0-\Omega_{R_{r1},P,L}}_{E_\infty} \\
    &=& \lim_{j\to\infty}\norm{\Phi_{L_j}(\psi_{L_j/2}w_0)-\Omega_{R_{r1},P,L}}_{E_{L_j}} \\
    &\leq& \limsup_{j\to\infty} \norm{\Phi_{L_j}(\psi_{L_j/2}w_0)-u_{L_j}}_{E_{L_j}} 
        +  \limsup_{j\to\infty} \norm{u_{L_j} -\Omega_{R_{r1},P,L}}_{E_{L_j}} \\
    &\leq& r,
    \end{eqnarray*}
which implies by \eqref{eq:5.30}
    $$  \norm{w_0-(\omega+\omega'(\cdot-P))}_{E_\infty} \leq 2r.
    $$
\end{proof}
%%%%%%%%%%%%%%%%%%%%%%%%%%%%%%%%
%%%%%%%%%%%%%%%%%%%%%%%%%%%%%%%%

\appendix

\setcounter{equation}{0}
\section{Proofs of Proposition \ref{pro:3.1} and (VI) of Lemma \ref{lem:5.1}}
\subsection{Proof of Proposition \ref{pro:3.1}}
In the following proof, we use an idea from Jeanjean-Tanaka \cite{JT}.

\begin{proof}[Proof of Proposition \ref{pro:3.1}]
We assume $L_j$, $u_j$ ($j=1,2,\cdots$) satisfy the assumption \eqref{eq:3.1} of Proposition \ref{pro:3.1}.
By Lemma \ref{lem:2.7}, we have for some $A>0$ independent of $j$
    $$  \norm{u_j}_{E_{L_j}}^2, \ \int_{D_{L_j}} H(u_j), \ \int_{D_{L_j}} h(u_j)u_j \leq A
        \quad \text{for all}\ j.
    $$

\smallskip

\noindent
{\sl Step 1: After extracting a subsequence, there exists a sequence $y_j^1\in \Z^N$ and 
$w^1\in\K_\infty \setminus\{ 0\}$ such that
    \begin{eqnarray*}
    &&y_j^1\in D_{L_j}, \nonumber\\
    &&u_j(x+y_j^1)\wlimit w^1(x) \quad \text{weakly in}\ H^1_{loc}(\R^N).  
    \end{eqnarray*}
}

\smallskip

\noindent
For $q\in(2,2^*)$, we set
    $$  d_j = \sup_{n\in \Z^N}\norm{u_j}_{L^q(D_1(n))} \quad \text{for}\ j=1,2,\cdots.
    $$
If $d_j\to 0$ as $j\to\infty$, by Lemma \ref{lem:3.3} we have $J_{L_j}(u_j)\to 0$,
which contradicts with \eqref{eq:3.1}.  Thus, after extracting a subsequence if necessary, we may assume
$d_j\to d_0>0$ and there exists $y_j^1\in \Z^N$ such that
    $$  \norm{u_j(\cdot+y_j^1)}_{L^1(D_1(0))} \to d_0>0.
    $$
We may also assume that there exists $w^1\in H^1_{loc}(\R^N)\setminus\{ 0\}$ such that
    $$  u_j(\cdot+y_j^1) \wlimit w^1 \quad \text{weakly in}\  H^1_{loc}(\R^N).
    $$
We claim $w^1\in \K_\infty\setminus\{ 0\}$.  In fact, for any $L\in \N$
    \begin{eqnarray*}
    \norm{w^1}_{E_\infty(D_L)}^2 &\leq& \limsup_{j\to\infty} \norm{u_j(\cdot+y_j^1)}_{E_{L_j}(D_L)}^2 \\
    &\leq& \limsup_{j\to\infty}\norm{u_j}_{E_{L_j}}^2\\
    &\leq& A, \\
    \int_{D_L} H(w^1) &\leq& \limsup_{j\to\infty} \int_{D_L} H(u_j(\cdot+y_j^1)) \\
    &\leq& \limsup_{j\to\infty} \int_{D_{L_j}} H(u_j(\cdot+y_j^1)) \\
    &\leq& A.
    \end{eqnarray*}
Since $A$ is independent of $L$, we have $w^1\in E_\infty$ and $\int_{\R^N} H(w^1)<\infty$.  Thus
$w^1\in \D$.
Next we see $w^1\in\K_\infty$.  For any $\varphi\in {C}_0^\infty(\R^N)$ we note that 
$\supp \varphi\subset D_{L_j}$ for large $j$.  Thus
    \begin{eqnarray*}
    &&\int_{\R^N}\nabla w^1\nabla\varphi+V(x)w^1\varphi-Q(x)g(w^1)\varphi\\
    &=& \lim_{j\to\infty}\int_{D_{L_j}} \nabla u_j(\cdot+y_j^1)\nabla\varphi+V(x)u_j(\cdot+y_j^1)\varphi
        -Q(x)g(u_j(\cdot+y_j^1))\varphi \\
    &=& \lim_{j\to\infty} J_{L_j}'(u_j)\varphi(\cdot-y_j^1)=0.
    \end{eqnarray*}
Therefore $w^1\in\K_\infty$.

\smallskip

\noindent
Next we assume that there exists $m_0\in\N$,  $w^\ell\in \K_\infty\setminus\{ 0\}$, $(y_j^\ell)_{j=1}^\infty
\subset\Z^N$ with $y_j^\ell\in D_{L_j}$ ($\ell=1,2,\cdots,m_0$) such that
    \begin{eqnarray}
    &&\dist_{L_j}(y_j^\ell,y_j^{\ell'})\to \infty \quad \text{for}\ \ell\not=\ell', \label{eq:A.1}\\
    &&u_j(\cdot+y_j^\ell)\wlimit w^\ell \ \text{weakly in}\ H^1_{loc}(\R^N) \ 
        \text{for all}\ \ell=1,2,\cdots,m_0. \label{eq:A.2}
    \end{eqnarray}
For $R_j\in\N$ with $R_j\to\infty$ and
$R_{j} \leq L_{j}$, we set
    $$  \widetilde w_j=\sum_{\ell=1}^{m_0} \Phi_{L_j}(\psi_{R_j}w^\ell)(\cdot-y_j^\ell)
        \in E_{L_j}.
    $$
Taking a subsequence if necessary, we may assume
    $$  \lim_{j\to\infty} \norm{u_j-\widetilde w_j}_{E_{L_j}}, \
        \lim_{j\to\infty} \norm{u_j}_{E_{L_j}}  \quad \text{exist}.
    $$
We show

\smallskip

\noindent
{\sl Step 2: $\lim_{j\to\infty} \norm{u_j-\widetilde w_j}_{E_{L_j}}^2 = \lim_{j\to\infty} \norm{u_j}_{E_{L_j}}^2
-\sum_{\ell=1}^{m_0} \norm{w^\ell}_{E_\infty}^2$.
}

\smallskip

\noindent
It follows from \eqref{eq:A.2} and
    $$  \psi_{R_j}w^\ell\to w^\ell\ \text{in}\ E_\infty\ \text{as}\ j\to\infty\ \text{for all}\ \ell=1,2,\cdots,m_0 
    $$
that
    \begin{eqnarray}
    &&\inp{u_j,\, \Phi_{L_j}(\psi_{R_j}w^\ell)(\cdot-y_j^\ell)}_{E_{L_j}}
    = \inp{u_j(\cdot+y_j^\ell),\, \Phi_{L_j}(\psi_{R_j}w^\ell)}_{E_{L_j}} \nonumber\\
    &&\qquad \to \norm{w^\ell}_{E_\infty}^2 \quad \text{for all}\ \ell=1,2,\cdots,m_0, \label{eq:A.3}\\
    && \inp{\Phi_{L_j}(\psi_{R_j}w^\ell)(\cdot-y_j^\ell),\, 
            \Phi_{L_j}(\psi_{R_j}w^{\ell'})(\cdot-y_j^{\ell'})}_{E_{L_j}} \nonumber\\
    &&\qquad \to
        \begin{cases}
        \norm{w^\ell}_{E_\infty}^2 &\text{if}\ \ell=\ell',\\
        0                           &\text{if}\ \ell\not=\ell'
        \end{cases}\label{eq:A.4}
    \end{eqnarray}
as $j\to\infty$. \\
Thus we have 
    \begin{eqnarray*}
    \norm{u_j-\widetilde w_j}_{E_{L_j}}^2 &=& \norm{u_j}_{E_{L_j}}^2 -2 \inp{u_j,\,\widetilde w_j}_{E_{L_j}}
        +\norm{\widetilde w_j}_{E_{L_j}}^2 \\
    &=& \norm{u_j}_{E_{L_j}}^2 -2 \sum_{\ell=1}^{m_0} \inp{u_j, \Phi_{L_j}(\psi_{R_j}w^\ell)(\cdot-y_j^\ell)}_{E_{L_j}} \\
    && \quad + \sum_{\ell=1}^{m_0}\sum_{\ell'=1}^{m_0} \inp{\Phi_{L_j}(\psi_{R_j}w^\ell)(\cdot-y_j^\ell),\,
            \Phi_{L_j}(\psi_{R_j}w^{\ell'})(\cdot-y_j^{\ell'})}_{E_{L_j}} \\
    &\to& \norm{u_j}_{E_{L_j}}^2 - \sum_{\ell=1}^{m_0}\norm{w^\ell}_{E_\infty}^2
        \quad \text{as}\ j\to\infty.
    \end{eqnarray*}

\smallskip

\noindent
Next we set
    $$  \widetilde d_j = \sup_{n\in\Z^N} \norm{u_j-\widetilde w_j}_{L^q(D_1(n))}.
    $$
After extracting a subsequence, we may assume $\lim_{j\to\infty}\widetilde d_j$ exists.  We consider 2 cases:
\begin{quotation}
\item[Case 1:] $\widetilde d_j\to 0$ as $j\to\infty$,
\item[Case 2:] $\widetilde d_j\not\to 0$ as $j\to\infty$.
\end{quotation}

\smallskip

\noindent
{\sl Step 3: If Case 1 occurs, we have $\norm{u_j-\widetilde w_j}_{E_{L_j}}\to 0$ as $j\to\infty$.
}

\smallskip

\noindent
In fact, if $\widetilde d_j\to 0$, we have by Lemma \ref{lem:3.2}
    \be\label{eq:A.5}
    \norm{u_j-\widetilde w_j}_{L^q(D_{L_j})} \to 0.
    \ee
We have
    \begin{eqnarray}
    \norm{u_j-\widetilde w_j}_{E_{L_j}}^2 
    &=& \inp{u_j,\, u_j-\widetilde w_j}_{E_{L_j}} - \inp{\widetilde w_j,\, u_j-\widetilde w_j}_{E_{L_j}} \nonumber\\
    &=& J_{L_j}'(u_j)(u_j-\widetilde w_j) -\int_{D_{L_j}} Q(x)h(u_j)(u_j-\widetilde w_j) \nonumber\\
    && + \int_{D_{L_j}} Q(x)f(u_j)(u_j-\widetilde w_j) - \inp{\widetilde w_j,\, u_j-\widetilde w_j}_{E_{L_j}} 
        \nonumber\\
    &=& o(1) -(I) +(II) -(III) \quad \text{as}\ j\to\infty.
        \label{eq:A.6}
    \end{eqnarray}
By \eqref{eq:A.5}, we can see that
    \be\label{eq:A.7}
    (II)\to 0 \quad \text{as}\ j\to\infty.
    \ee
For (III), we have
    \begin{eqnarray}
    (III)&=& \inp{\sum_{\ell=1}^{m_0}\Phi_{L_j}(\psi_{R_j}w^\ell)(\cdot-y_j^\ell),\, 
                u_j-\sum_{\ell'=1}^{m_0}\Phi_{L_j}(\psi_{R_j}w^{\ell'})(\cdot-y_j^{\ell'})}_{E_{L_j}} \nonumber\\
    &=& \sum_{\ell=1}^{m_0}\inp{\Phi_{L_j}(\psi_{R_j}w^\ell),\, 
            u_j(\cdot+y_j^\ell)-\Phi_{L_j}(\psi_{R_j}w^{\ell})}_{E_{L_j}} \nonumber\\
    &&-\sum_{\ell\not=\ell'} \inp{\Phi_{L_j}(\psi_{R_j}w^\ell),\, 
            \Phi_{L_j}(\psi_{R_j}w^{\ell'})(\cdot+y_j^\ell-y_j^{\ell'})}_{E_{L_j}} \nonumber\\
     &\to& 0 \quad \text{as}\ j\to\infty,  \label{eq:A.8}
     \end{eqnarray}
which follows from \eqref{eq:A.3}--\eqref{eq:A.4}.\\
For (I), we fix $\theta\in (0,1]$ small and $\rho\geq 1$ large.  We use notation:
    $$  B(\rho,\ell,j)=\{ x\in D_{L_j};\, \dist_{L_j}(x,y_j^\ell)<\rho\},
    $$
that is,
    $$  D_{L_j}\setminus B(\rho,\ell,j)= D_{L_j}\setminus\bigcup_{n\in \Z^N} B_\rho(y_j^\ell+2L_jn).
    $$
We note that
    $$  B(\rho,\ell,j)\cap B(\rho,\ell',j) =\emptyset \quad \text{for} \ \ell\not=\ell',
    $$
provided
    \be\label{eq:A.9}
    \rho < \half\min_{\ell\not=\ell'}\dist_{L_j}(y_j^\ell,y_j^{\ell'}).
    \ee
By \eqref{eq:A.1} we remark that for any $\rho\geq 1$ \eqref{eq:A.9} holds for large $j$.

We compute
    \begin{eqnarray*}
    (I) &=& \int_{D_{L_j}} Q(x)h(u_j)(u_j-\widetilde w_j) \\
    &=& \int_{D_{L_j}\setminus\bigcup_{\ell=1}^{m_0}B(\rho,\ell,j) } Q(x)h(u_j)(u_j-\widetilde w_j) 
        + \sum_{\ell=1}^{m_0} \int_{B(\rho,\ell,j)}Q(x) h(u_j) (u_j-\widetilde w_j) \\
    &=& (I1) + \sum_{\ell=1}^{m_0} (I2)_\ell.
    \end{eqnarray*}
By Lemma \ref{lem:2.1} (iii-b),
    \begin{eqnarray*}
    (I1)  
    &\geq& -\int_{D_{L_j}\setminus\bigcup_{\ell=1}^{m_0}B(\rho,\ell,j) } Q(x)h(u_j)\widetilde w_j\\
    &=& - \sum_{\ell'=1}^{m_0} \int_{D_{L_j}\setminus\bigcup_{\ell=1}^{m_0}B(\rho,\ell,j) }
        Q(x)h(u_j)\Phi_{L_j}(\psi_{R_j}w^{\ell'})(\cdot-y_j^{\ell'}) \\
    &\geq& -\norm Q_{L^\infty} \sum_{\ell'=1}^{m_0} \left(
        \theta\int_{D_{L_j}\setminus\bigcup_{\ell=1}^{m_0}B(\rho,\ell,j) } H(u_j) \right.\\
    && \qquad 
        +\left. \frac 1 \theta \int_{D_{L_j}\setminus\bigcup_{\ell=1}^{m_0}B(\rho,\ell,j) } 
            H(\Phi_{L_j}(\psi_{R_j}w^{\ell'})(\cdot-y_j^{\ell'})) \right)\\
    &\geq& -\norm Q_{L^\infty} \sum_{\ell'=1}^{m_0} \left(
        \theta A +\frac 1\theta \int_{\R^N\setminus B_\rho(0)} H(w^{\ell'})\right).
    \end{eqnarray*}
We also have
    \begin{eqnarray*}
    (I2)_\ell
    &=& \int_{B(\rho,\ell,j)} Q(x)h(u_j)(u_j-\sum_{\ell'=1}^{m_0} \Phi_{L_j}(\psi_{R_j}w^{\ell'})(\cdot-y_j^{\ell'})) \\
    &=& \int_{B_\rho(0)} Q(x) h(u_j(\cdot+y_j^{\ell}))(u_j(\cdot+y_j^{\ell})-w^{\ell}) +o(1)\\
    &\to& 0 \quad \text{as}\ j\to\infty.
    \end{eqnarray*}
Thus we have
    $$  \liminf_{j\to\infty}(I) \geq -\norm Q_{L^\infty} \sum_{\ell=1}^{m_0}\left(\theta A
        +\frac 1 \theta \int_{\R^N\setminus B_\rho(0)} H(w^\ell)\right).
    $$
Since $\theta\in (0,1]$ and $\rho\geq 1$ are arbitrary, we have
    \be\label{eq:A.10}
    \liminf_{j\to\infty} (I) \geq 0.
    \ee
Thus by \eqref{eq:A.6}--\eqref{eq:A.10}, we have $\norm{u_j-\widetilde w_j}\to 0$.

\smallskip

\noindent
Next we deal with Case 2.

\smallskip

\noindent
{\sl Step 4: If Case 2 occurs, there exists $w^{m_0+1}\in \K_\infty\setminus\{ 0\}$ and a sequence 
$(y_j^{m_0+1})_{j=1}^\infty\subset\Z^N$ such that
    \begin{eqnarray}
    &&\dist_{L_j}(y_j^\ell,y_j^{m_0+1})\to\infty\quad \text{for all}\ \ell=1,2,\cdots,m_0, \label{eq:A.11}\\
    &&u_j(\cdot+y_j^{m_0+1})\wlimit w^{m_0+1} \quad \text{weakly in}\ H_{loc}^1(\R^N)  \nonumber
    \end{eqnarray}
as $j\to\infty$.
}

\smallskip

\noindent
In fact, we choose $y_j^{m_0+1}\in \Z^N$ such that
    $$  \norm{u_j-\widetilde w_j}_{L^q(D_1(y_j^{m_0+1}))} = \widetilde d_j.
    $$
\eqref{eq:A.11} follows from \eqref{eq:A.1} and \eqref{eq:A.2}.  By \eqref{eq:A.11}, we can see 
	$$	\norm{\Phi_{L_j}(\psi_{R_j}w^\ell)}_{L^q(D_1(y_j^{m_0+1}))}\to 0 \quad\text{as}\  j\to \infty
	$$
for $\ell=1,2,\cdots, m_0$ and thus $u_j(\cdot+y_j^{m_0+1})$ has a non-zero weak limit $w^{m_0+1}$.  
As in Step 1, we can see $w^{m_0+1}\in \K_\infty\setminus\{ 0\}$.

\smallskip

\noindent
{\sl Step 5: Conclusion}

\smallskip

\noindent
We follow a recursive procedure. We start with $m=1$ and use Step 1 to find $w^1$ and $(y_j^1)_{j=1}^\infty$.
If it satisfies 
$\sup_{n\in\Z^N}\norm{u_j-\Phi_{L_j}(\psi_{R_j}w^1)(\cdot-y_j^1)}_{L^q(D_1(n))}\to 0$, we are
done by Step 3.  Otherwise, we use Step 4 to get $w^2$ and $(y_j^2)_{j=1}^\infty$ and 
continue this procedure. Next we prove this procedure stops in finite steps.

By Step 2, we have
    $$  \sum_{\ell=1}^{m} \norm{w^\ell}_{E_\infty}^2 \leq \lim_{j\to\infty}\norm{u_j}_{E_{L_j}}^2
        \leq A.
    $$
On the other hand, by Remark \ref{remark:2.2}
    $$  m\rho_\infty^2 \leq A.
    $$
Thus, this procedure must end in finite steps.  Therefore there exists $(w^\ell)_{\ell=1}^m\subset
\K_\infty\setminus\{ 0\}$ and $(y_j^\ell)_{j=1}^\infty$ such that \eqref{eq:3.2}--\eqref{eq:3.3} hold.
We show here \eqref{eq:3.4} and \eqref{eq:3.5}.

By \eqref{eq:2.8},
    \begin{eqnarray*}
    J_{L_j}(u_j) &=& J_{L_j}(u_j) -\half J_{L_j}'(u_j)u_j +o(1) \\
    &=& \half\int_{D_{L_j}} Q(x)u_j^2 +o(1).
    \end{eqnarray*}
Thus \eqref{eq:3.3} implies
    \be\label{eq:A.12}
    J_{L_j}(u_j) \to \half\sum_{\ell=1}^m \int_{\R^N} Q(x)(w^\ell)^2
    =\sum_{\ell=1}^m J_\infty(w^\ell).
    \ee
This is nothing but \eqref{eq:3.4}.

By \eqref{eq:A.12} and \eqref{eq:3.3}, we have
    \begin{eqnarray*}
    \int_{D_{L_j}} Q(x)H(u_j) &=& J_{L_j}(u_j)-\half\norm{u_j}_{E_{L_j}}^2 +\int_{D_{L_j}} Q(x)F(u_j)\\
    &\to& \sum_{\ell=1}^m \left(J_\infty(w^\ell) +\half\norm{w^\ell}_{E_\infty}^2
        -\int_{\R^N} Q(x)F(w^\ell)\right)\\
    &=& \sum_{\ell=1}^m \int_{\R^N} Q(x)H(w^\ell).
    \end{eqnarray*}
Clearly we have for all $R\geq 1$
    $$  \int_{\bigcup_{\ell=1}^m B(R,\ell,j)} Q(x)H(u_j) 
        \to  \sum_{\ell=1}^m \int_{B_R(0)} Q(x)H(w^\ell).
    $$
Thus for any $\epsilon>0$ we can find a large $R_\epsilon>1$ such that
    $$  \int_{D_{L_j}\setminus \bigcup_{\ell=1}^m B(R_\epsilon,\ell,j)} Q(x)H(u_j)
        <\epsilon,
    $$
from which we can show \eqref{eq:3.5}.
\end{proof}

%%%%%%%%%%%%%%%%%%%%%%%%%%%%%

\subsection{Proof of (VI) of Lemma \ref{lem:5.1}}
We need the following lemma to prove (VI) of Lemma \ref{lem:5.1}.

\begin{lem}[Subsolution estimate, Theorem C.1.2 of \cite{S}]\label{lem:A.1}
Suppose \\
$v\in H^1(B_2(x_0))$ solves
    $$
    -\Delta v+\widehat V(x)v=0 \quad \text{in}\ B_2(x_0).
    $$
Then
    $$  \abs{v(x_0)} \leq C\int_{B_1(x_0)} \abs v,
    $$
where $C>0$ is a constant depending only on the following quantities:
   \begin{eqnarray*}
    &&\sup_{x\in B_1(x_0)}\int_{\abs{y-x}\leq1} {\widehat V(y)}^-\, dy\quad \mbox{if}\ N=1;\\
    &&\sup_{x\in B_1(x_0)}\int_{\abs{y-x}\leq\half}\log(\abs{x-y}^{-1})\,{\widehat V(y)}^-\, dy\quad \mbox{if}\ N=2;\\
    &&\sup_{x\in B_1(x_0)}\int_{\abs{y-x}\leq 1}\abs{x-y}^{2-N} {\widehat V(y)}^-\, dy\quad
    \mbox{if}\ N\geq 3.
    \end{eqnarray*}
\end{lem}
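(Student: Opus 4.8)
The plan is to reduce Lemma~\ref{lem:A.1} to the classical interior boundedness estimate for nonnegative subsolutions of $-\Delta w\le V_0 w$ with $V_0$ in the local Kato class, and to prove that estimate by combining the Newtonian-potential representation on small balls with a De~Giorgi-type iteration in which the Kato modulus of $V_0$ supplies the contraction.

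\emph{Reduction.} By Kato's inequality, $w:=\abs v\in H^1(B_2(x_0))$ satisfies, weakly in $B_2(x_0)$,
\[
-\Delta w\le -(\operatorname{sgn}v)\,\Delta v=-(\operatorname{sgn}v)\,\widehat V(x)\,v=-\widehat V(x)\,w\le \widehat V^-(x)\,w,
\]
using $-\Delta v+\widehat V v=0$ and $-\widehat V\le\widehat V^-$. Setting $V_0:=\widehat V^-\ge0$, we have $w\ge0$, $-\Delta w\le V_0 w$, and $V_0$ lies in the local Kato class — the three finiteness hypotheses being precisely a bound on the Kato quantity of $V_0$ at scale $1$. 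A standard preliminary ($H^1\hookrightarrow L^{2^*}$ together with one Brezis-Kato bootstrap with the Kato potential) gives $w\in L^\infty_{\mathrm{loc}}(B_2(x_0))$, so it suffices to produce a constant $C$, depending only on $N$ and the Kato quantity at scale $1$, with $w(x_0)\le C\int_{B_1(x_0)}w$.

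\emph{Representation on balls.} Fix $x$ and a ball $B_\rho(x)\subseteq B_2(x_0)$, let $G_\rho$ be the Dirichlet Green function of $B_\rho(x)$, and let $u$ solve $-\Delta u=V_0 w$ in $B_\rho(x)$ with $u=w$ on $\partial B_\rho(x)$. Then $u\ge0$, the function $w-u$ is subharmonic with zero boundary trace, hence $w\le u$, and the mean-value property plus the Green representation give
\[
w(x)\le \frac{1}{\abs{\partial B_\rho(x)}}\int_{\partial B_\rho(x)}w\,d\sigma+\int_{B_\rho(x)}G_\rho(x,y)\,V_0(y)\,w(y)\,dy.
\]
Averaging in $\rho\in(\tau/2,\tau)$ and using the coarea formula, for some $\rho_*\in(\tau/2,\tau)$ the boundary term is $\le C_N\tau^{-N}\int_{B_\tau(x)}w$. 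For the potential term, the bound $G_\rho(x,y)\le c_N k_N(\abs{x-y})$, with $k_N(t)=t^{2-N}$ for $N\ge3$, $k_N(t)=\log(1/t)$ for $N=2$, $k_N(t)\le\mathrm{const}$ for $N=1$, yields
\[
\int_{B_\rho(x)}G_\rho(x,y)V_0(y)w(y)\,dy\le \kappa(\tau)\,\sup_{B_\tau(x)}w,\qquad
\kappa(\tau):=c_N\sup_{z}\int_{B_\tau(z)}k_N(\abs{z-y})\,V_0(y)\,dy,
\]
where $\kappa(\tau)\to0$ as $\tau\to0$ by the Kato property, while $\kappa(1)$ is, up to $c_N$, the quantity in the statement.

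\emph{Iteration, and the main obstacle.} Writing $M(r)=\sup_{B_r(x_0)}w$ for $r\le1$ and applying the above at each $x\in B_r(x_0)$ with a ball of radius $\sim\tau$ contained in $B_{r+\tau}(x_0)\subseteq B_1(x_0)$, we get $M(r)\le C_N\tau^{-N}\int_{B_1(x_0)}w+\kappa(\tau)\,M(r+\tau)$. Choosing radii $\rho_k\uparrow 3/4$ with increments $\tau_k$ shrinking fast enough that $\kappa(\tau_k/2)<2^{-N-1}$ for all large $k$, the recursion $a_k\le A_k+\theta_k a_{k+1}$ (with $a_k=M(\rho_{k-1})$, $A_k\sim\tau_k^{-N}\int_{B_1(x_0)}w$, $\theta_k=\kappa(\tau_k/2)$) can be summed: the product of the contraction factors decays geometrically and beats the $\tau_k^{-N}$ blow-up of the inhomogeneous terms, while $a_k\le M(3/4)<\infty$ annihilates the tail. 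This gives $M(1/2)\le C\int_{B_1(x_0)}w$, in particular $w(x_0)\le C\int_{B_1(x_0)}w$, with $C$ depending only on $N$ and $\kappa(1)$; the finitely many initial steps (small $k$) contribute only a factor depending on $\kappa(1)$. I expect the summability of this iteration to be the delicate point: knowing merely $\kappa(\tau)\to0$ is insufficient, and one must exploit that the scale can be chosen so that $c_N\kappa(\tau)<2^{-N}$, so the contraction wins against the $\tau^{-N}$ growth. An alternative, equally standard route is probabilistic: express $v$ on balls by Feynman-Kac, bound $\mathbb{E}_{x_0}\big[\exp\!\big(\int_0^{\tau_B}\widehat V^-(b_s)\,ds\big)\big]$ by Khas'minskii's lemma through the Kato norm, and integrate over the radius of $B$ to recover a solid $L^1$ average; this is essentially the argument of \cite{S}.
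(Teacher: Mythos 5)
You should first be aware that the paper does not prove Lemma~\ref{lem:A.1} at all: it is quoted verbatim from Simon \cite{S} (Theorem C.1.2), whose own argument is the Feynman--Kac/Khas'minskii route that you only mention in your closing sentence. Your main, real-variable route (Kato's inequality, Green representation on balls, iteration over shrinking scales) is therefore a genuinely different path, and it is a reasonable one for Kato-class potentials; the reduction $w=|v|$, $-\Delta w\le \widehat V^- w$, the inequality $w\le u$ via the subharmonicity of $w-u$, and the selection of a good radius by averaging are all fine. Also, the summability of the iteration, which you flag as the delicate point, is not really the issue: $\kappa$ is nondecreasing in $\tau$, so a single scale $\tau_*$ with $c_N\kappa(\tau_*)<2^{-N-1}$ gives the contraction at every smaller scale and the recursion closes, with the finitely many steps at scales above $\tau_*$ contributing a factor controlled by $\kappa(1)$ and $\tau_*^{-N}$.

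The genuine gap is where $\tau_*$ comes from. You assert that the three finiteness hypotheses put $\widehat V^-$ in the local Kato class and then use ``$\kappa(\tau)\to0$ as $\tau\to0$ by the Kato property''. Finiteness of the scale-$1$ quantity does \emph{not} imply this vanishing (think of a sum of ever more concentrated bumps each of fixed Kato mass), and consequently your final claim that $C$ depends only on $N$ and $\kappa(1)$ is not what your argument delivers: the scale $\tau_*$, hence the constant, depends on the full modulus $\tau\mapsto\kappa(\tau)$. This defect cannot be repaired, because dependence on the scale-$1$ quantity alone is false: for $N=3$ take $\widehat V=-\lambda\chi_{B_\delta(0)}$ with $\lambda=(\pi/2\delta)^2$ (critical coupling) and $v(r)=\sin(\sqrt\lambda\, r)/(\sqrt\lambda\, r)$ for $r\le\delta$, $v(r)=\tfrac2\pi\,\delta/r$ for $r\ge\delta$; this is an $H^1(B_2)$ solution, the quantity $\sup_{x\in B_1}\int_{|y-x|\le1}|x-y|^{-1}\widehat V^-(y)\,dy=\pi^3/2$ is independent of $\delta$, yet $|v(0)|/\int_{B_1}|v|\ge c\,\delta^{-1}\to\infty$. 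So the lemma must be read as Simon intends it, with $\widehat V^-$ locally Kato and the constant depending on its Kato modulus (equivalently, on a scale at which the quantity is small together with its scale-$1$ value); this weaker form is also all that the paper uses in Lemma~\ref{lem:5.1}(VI), since there $\widehat V^-\le Q f(u)/u\le C_p|u|^{p-2}$ is bounded in $L^q$ with $q>N/2$ by the $H^1$ bound, and an $L^q$ bound does furnish a modulus. A smaller loose end: your recursion needs $M(3/4)<\infty$ a priori, and ``one Brezis--Kato bootstrap with the Kato potential'' is close to assuming the local boundedness being proved; either run the iteration on truncations $\min(w,k)$ or make that preliminary step precise.
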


\begin{proof}[Proof of (VI) of Lemma \ref{lem:5.1}]
We may assume that $r_0\in (0,1]$ and $\norm u_{H^1(B_2(x_0))}\leq r_0\leq 1$.

We apply Lemma \ref{lem:A.1} to obtain
	$$	\abs{u(x)} \leq C\int_{B_1(x)}\abs{u(y)}\, dy \quad \text{for}\ x\in B_1(x_0),
	$$
where $C>0$ is a constant depending only on 
	$$	\left( V(x)+Q(x)\frac{h(u)}u -Q(x)\frac{f(u)}u\right)^- \leq Q(x)\frac{f(u)}u
		\quad \text{in}\ B_2(x_0).
	$$
Thus $C>0$ does not depend on $u$ with $\norm u_{H^1(B_2(x_0))}\leq 1$.  Therefore we have
	$$	\abs{u(x)} \leq C\norm u_{L^1(B_1(x))} \leq C'r_0.
	$$
Choosing $r_0>0$ small, we get the conclusion.
\end{proof}

%%%%%%%%%%%%%%%%%%%%%%%%%%%%%
%%%%%%%%%%%%%%%%%%%%%%%%%%%%%

\section*{Acknowledgements}
The authors are greatful to Professor Zhi-Qiang Wang for suggesting us to study this problem 
and also for valuable comments.  The Authors are also grateful to to Professor Norihisa Ikoma for 
valuable comments.

This paper was written during Chengxiang Zhang's visit to Department of Mathematics,
School of Science and Engineering, Waseda University as a research fellow with support from
China Scholarship Council.  Chengxiang Zhang would like to thank China Scholarship Council
for the support and Waseda University for kind hospitality.

%%%%%%%%%%%%%%%%%%%%%%%%%%%%%
%%%%%%%%%%%%%%%%%%%%%%%%%%%%%

\end{document}